\pgfplotsset{compat=1.11}
\numberwithin{equation}{section}
\newtheorem{theorem}{\textit{Theorem}}[section]
\newtheorem{proposition}[theorem]{\textit{Proposition}}
\newtheorem{definition}[theorem]{\textit{Definition}}
\newtheorem{lemma}[theorem]{\textit{Lemma}}
\newtheorem{corollary}[theorem]{\textit{Corollary}}
\newtheorem{remark}[theorem]{\textit{Remark}}
\newtheorem{example}[theorem]{\textit{Example}}
\DeclareMathOperator{\cco}{\overline{conv}}
\title{New types of convergence for unbounded star-shaped sets}
\author{Luisa F. Higueras-Montaño}
\address{Departamento de  Matem\'aticas,
Facultad de Ciencias, Universidad Nacional Aut\'onoma de M\'exico, 04510 Ciudad de M\'exico, M\'exico.}
\email{luisaf.hm@ciencias.unam.mx}
\keywords{Star-shaped sets, Wijsman convergence, Attouch-Wets convergence, Duality, Flowers, Convex sets}
\subjclass[2020]{Primary: 52A20, 52A30, 54A10, 54B20. Secondary: 40A30, 54C10}
\begin{document}

\begin{abstract}
    We introduce radial variants of the Wijsman and Attouch-Wets topologies for the family $\mathcal{S}_{rc}^d$ of star sets $A \subseteq \mathbb{R}^d$ that are radially closed.
    These topologies give rise to new types of convergence for star-shaped sets with respect to the origin, even when such sets are not closed or bounded. Our approach relies on a new family of functionals, called \textit{radial distance functionals}, which measure ``radial distances'' between points $x \in \mathbb{R}^d$ and sets $A \in \mathcal{S}_{rc}^d$. These are natural radial analogues of the classical distance functionals.
    We prove that our radial Wijsman type topology $\tau_{W^r}$ is not metrizable on $\mathcal{S}_{rc}^d$, while our radial Attouch-Wets type topology $\tau_{AW^r}$  is completely metrizable. A corresponding radial Attouch-Wets distance $d_{AW^r}$ is introduced, and we prove that $d_{AW}(A,K) \leq d_{AW^r}(A,K)$ for all closed $A,K \in \mathcal{S}_{rc}^d$, where $d_{AW}$ denotes the Attouch-Wets distance. Among others,
    these results are applied to prove the continuity of the star duality on $\mathcal{S}_{rc}^d$ with respect to both $\tau_{W^r}$ and $\tau_{AW^r}$, and to establish topological properties of the family of flowers associated with closed convex sets containing the origin.
    \end{abstract}

\maketitle

\section{Introduction}

The geometry of star-shaped sets in Euclidean spaces has been an active and ongoing area of research since the 1960's. It is well known that these sets play a fundamental role in the dual Brunn-Minkowski theory and are frequently used in fields such as
functional analysis, operations research, and computational geometry, among others. A comprehensive overview of the literature and current research directions can be found in the remarkable survey \cite{Moszynska2020}.

Let $\mathbb{R}^d$, with $d\geq2$, be the  $d$-dimensional Euclidean space equipped with standard norm $\|\cdot\|$ and scalar product $\langle\cdot,\cdot\rangle$. The corresponding unit ball and unit sphere are denoted by $B_2^d$ and $\mathbb{S}^{d-1}$, respectively. In this work, we are particularly interested in star-shaped sets with respect to the origin. We allow our sets being unbounded or non-closed. Accordingly, by a \textit{star set} we mean a non-empty $A\subseteq\mathbb{R}^d$ such that $ta\in A$ for every $a\in A$ and $0\leq t\leq1$. 

Associated with each star set $A$, the \textit{radial function}  $\rho_A:\mathbb{S}^{d-1}\rightarrow[0,\infty]$ is defined by
$$
\rho_A(\theta):=\sup\left\{\lambda\geq0: \lambda\theta\in A\right\}.
$$

A \textit{star body} is a star set $A\subseteq\mathbb{R}^d$ that is \textit{radially closed}, meaning  that $\rho_A(\theta)\theta\in A$ for all directions $\theta\in\mathbb{S}^{d-1}$ such that $\rho_A(\theta)<\infty$. Observe that each star body is uniquely determined by its radial function. The family of all star bodies in $\mathbb{R}^d$ will be denoted by $\mathcal{S}_{rc}^{d}$. The subfamily of star bodies $A\subseteq\mathbb{R}^d$ with continuous radial function $\rho_A$ will be denoted by $\mathcal{S}_{1}^{d}$.

Significant research has been devoted to the geometric and topological properties of star sets in $\mathbb{R}^d$, see for instance \cite{Beer1975,BeerKlee1987,GardnerBook,HansenMartini2010,HansenMartini2011,Klain96,LipschitzStarbodies,MilmanRotem2017RadialSums,Sojka2013} and the references in \cite{Moszynska2020}. However, most of this work imposes conditions such as boundedness or closedness on the sets under consideration. Here, in the spirit of the seminal work \cite{Wijsman1966} by R. Wijsman, we address the problem of providing appropriate notions of convergence  for  star sets, which may be unbounded or non-closed.

Our main contribution is the introduction of new notions of convergence for star bodies, defined through new topologies on the family $\mathcal{S}_{rc}^{d}$. These topologies are conceived as radial counterparts of the Wijsman and the Attouch–Wets topologies. We refer to them as the \textit{radial Wijsman topology} $\tau_{W^r}$ (Definition \ref{defn:radial-WijsmanTop}) and the \textit{radial Attouch-Wets topology} $\tau_{AW^r}$ (Definition \ref{def:radialAW}), respectively. 
They provide a natural framework to study the topological properties of the \textit{star duality} $\Phi:\mathcal{S}_{rc}^{d}\rightarrow\mathcal{S}_{rc}^{d}$,  defined by
\begin{equation}\label{eq:duality-Src}
\rho_{\Phi(A)}:=\frac{1}{\rho_{A}}.
\end{equation}
We adopt the convention that $\frac{1}{0}=\infty$ and $\frac{1}{\infty}=0$.  The duality $\Phi$, first introduced by M. Moszy\'nska \cite{Moszynska1999} on the family of compact star sets with continuous radial functions and the origin in the interior, was recently studied by E. Milman, V. Milman and L. Rotem \cite{MMilmanRotem} on the broader family $\mathcal{S}_{rc}^d$. There, a key connection is drawn  between $\Phi$ and the classical polar duality (see \cite[Theorem 9.12]{MMilmanRotem} or Section \ref{sec:dualities} of the present work). More recently, cost functions were used in \cite{ZooDualities} to exhibit new dualities on $\mathcal{S}_{rc}^d$.

In Theorems \ref{thm:duality-Wr-continuous} and \ref{thm:duality-awr-continua}, we prove the continuity of the duality $\Phi$ with respect to the topologies $\tau_{W^r}$ and $\tau_{AW^r}$, respectively. As a result, on the spaces $\mathcal{X}=(\mathcal{S}_{rc}^{d},\tau_{W^r})$ and $\mathcal{X}=(\mathcal{S}_{rc}^{d},\tau_{AW^r})$, the map $\Phi:\mathcal{X}\rightarrow\mathcal{X}$ is a continuous duality such that $B_2^d$ is the unique fixed point. 

A key ingredient in developing our radial variants of the Wijsman and Attouch-Wets topologies is the introduction of a \textit{radial distance functional},  a tool that measures the  ``radial distance''  between a star body and a point of $\mathbb{R}^d$. This functional is defined as follows:

\begin{definition}\label{defn:radialdist}
    Let $A\in\mathcal{S}_{rc}^{d}$ and $x\in\mathbb{R}^d$. Then the radial distance from $x$ to $A$ is  defined as
    $$
    d_r(x,A):=\inf\left\{\alpha>0:x\in A\tilde{+}\alpha B_2^d \right\}.
    $$
    The map $d_r(\cdot,A):\mathbb{R}^d\rightarrow[0,\infty)$ sending $x$ to $d_r(x,A)$ is called the radial distance functional associated to $A$.
\end{definition}

Here $\tilde{+}$ denotes the radial sum of star bodies (see Section \ref{sec:preliminaries}). The basic properties of the distance functional are established in Section \ref{sec:radial-distance-functional}. Among other results, we show that the map sending each  $A\in\mathcal{S}_{rc}^d$ to $d_r(\cdot,A)$ is injective ((P3) of Remark \ref{rem:properties-d-r}), and that distance functionals can be used as an alternative to compute the radial metric  $\delta(A_1,A_2)$ between bounded star bodies $A_1,A_2\subseteq\mathbb{R}^d$. This relation is formally established in Proposition \ref{prop:caract-dHr-distanciaradial}, where we prove that
$$
\delta(A_1,A_2)=\sup_{x\in\mathbb{R}^d}\left|d_r(x,A_1)-d_r(x,A_2)\right|.
$$
It is worth noticing that radial distance functionals may be discontinuous (see Section \ref{sec:encajes-sin-estruct}). Despite this fact, Proposition \ref{prop:car-starb-conti-distance} proves that $d_r(\cdot, A)$ is continuous if and only if $A\in\mathcal{S}_1^d$, and that each $A\in\mathcal{S}_1^d$ must be closed.

Below, we briefly outline the notions of convergence introduced in this work. The convergence on $\mathcal{S}_{rc}^d$ determined by the radial Wijsman topology $\tau_{W^r}$ corresponds to the pointwise convergence of radial distance functionals. Specifically, a net $(A_i)_{i\in I}\subseteq\mathcal{S}_{rc}^d$ converges in the topology $\tau_{W^r}$  to some $A\in\mathcal{S}_{rc}^d$ (i.e., it is $\tau_{W^r}$-convergent to $A$) if and only if
$$
\lim_{i\in I}d_r(x,A_i)=d_r(x,A),\textit{ for every }x\in\mathbb{R}^d.
$$
See (\ref{eq:pointwise-convergence-rdistance}). Section \ref{sec:WijsmanTop} focuses on the radial Wijsman topology on $\mathcal{S}_{rc}^d$ and $\mathcal{S}_{1}^d$. Among other results, we show that convergence in $\tau_{W^r}$ is equivalent to the pointwise convergence of the corresponding radial functions (Theorem \ref{thm:caract-rWijsmanTopology}). As a consequence, we establish in Corollary \ref{cor:rWijsman-nonmetrizable} that $\tau_{W^r}$ is not metrizable on $\mathcal{S}_{rc}^d$. We also investigate how this topology relates to the classical Wijsman topology on the spaces of closed star bodies and closed convex sets containing the origin; see Section \ref{sec:rel-Wr-W} for details.

The notion of convergence induced by the radial Attouch-Wets topology $\tau_{AW^r}$ is established in Section \ref{sec:radial Attouch-Wets topology}. This notion corresponds to the uniform convergence of radial distance functionals on bounded sets of $\mathbb{R}^d$. In this way, a sequence $(A_n)_n$ of star bodies is $\tau_{AW^r}$-convergent to some $A\in\mathcal{S}_{rc}^d$ if and only if
$$
d_r(\cdot,A_n)\textit{ converges  uniformly to }d_r(\cdot,A),\textit{on every bounded }C\subseteq\mathbb{R}^d.
$$
See Definition \ref{defn:radial-WijsmanTop}. This notion of convergence is described in terms of sequences, since, in Theorem~\ref{thm:rAW-complete-metrizable}, we show that $\tau_{AW^r}$ is a completely metrizable topology, in contrast with $\tau_{W^r}$. 
Furthermore, the map $d_{AW^r}:\mathcal{S}_{rc}^d\times\mathcal{S}_{rc}^d\rightarrow[0,1]$ defined by
\begin{equation}\label{defn:rd-AW}
    d_{AW^r}(A_1,A_2):=\sup_{j\in\mathbb{N}}\min\left\{\frac{1}{j}, 
\sup_{\|x\|\leq j}\left|d_{r}(x,A_1)-d_r(x,A_2)\right|\right\}
\end{equation}
is a metric compatible with $\tau_{AW^r}$, and the space $(\mathcal{S}_{rc}^d, d_{AW^r})$ is complete. We refer to the metric $d_{AW^r}$ as \textit{the radial Attouch-Wets distance}. In this case, the subspace $\mathcal{S}_{1}^d\subseteq(\mathcal{S}_{rc}^d, d_{AW^r})$ is closed and, therefore, $(\mathcal{S}_{1}^d, d_{AW^r})$ is complete (Proposition \ref{prop:subspaces-rdAW}).

In Proposition \ref{prop:rdAW-es-top-strnger-than-rdAW}, we exhibit a comparison between  the Attouch-Wets distance  $d_{AW}$ (see (\ref{eq:dAW})) and its radial version $d_{AW^r}$. We show that, for every pair of closed star bodies $A_1,A_2\subseteq\mathbb{R}^d$,
\begin{equation}
\label{eq:dAW-es-menor-rdAW}
d_{AW}(A_1,A_2)\leq d_{AW^r}(A_1,A_2).
\end{equation}
This inequality can be seen as a generalization of the well-known inequality between the Hausdorff distance and the radial metric for compact star bodies $A_1,A_2\subseteq\mathbb{R}^d$,  
\begin{equation}\label{eq:d-H-menor-rd-H}
d_H\left(A_1,A_2\right)\leq\delta\left(A_1,A_2\right).
\end{equation}
See e.g., \cite[Corollary 14.3.3]{MoszynskaBook}. 

Inequality (\ref{eq:dAW-es-menor-rdAW}) provides a basis for examining the relationship between the Attouch-Wets topology and its radial counterpart $\tau_{AW^r}$ on the family $\mathcal{K}_{0}^d$ of closed convex sets $K\subseteq\mathbb{R}^d$ containing the origin. It follows that $\tau_{AW}\subseteq\tau_{AW^r}$ on $\mathcal{K}_{0}^d$, although, as detailed in Remark \ref{rem:d_AWr de convexos}-(1), this inclusion is strict. For the subfamily of compact convex sets  $\mathcal{K}_{0,b}^d$, the metrics $\delta$ and $d_{AW^r}$ generate the same topology. In particular, on $\mathcal{K}_{(0),b}^d$, consisting of compact convex sets with the origin in their interior, $d_H$ and $d_{AW^r}$ generate the same topology, see Remark \ref{rem:d_AWr de convexos}-(2)-(3).

The motivation for this work arises from two recent papers \cite{MMilmanRotem} and \cite{NataliaLuisa2}.
The first introduces a new family of star bodies, called \textit{flowers}, and shows, via the duality $\Phi$, that every pair $(K,K^\circ)$ (where $K^\circ$ denotes de polar set of $K\in\mathcal{K}_0^d$) is uniquely determined by a flower (see \cite[Theorem 9.12]{MMilmanRotem},\cite{MilmanRotem2020}). The second one, coauthored by the author, highlights the relevance of studying the geometric and topological structure of $\mathcal{K}_0^d$ to characterize all dualities  that are topologically conjugate to the polar duality.

Building on these ideas, Section \ref{sec:dualities} examines the topological properties of the duality $\Phi$ and the family of flowers. We begin by noticing that $\Phi$ is also a duality for $\mathcal{S}_{1}^d$ (Proposition \ref{prop:duality-S1}). Then, as mentioned before, we establish continuity of $\Phi$ on $\mathcal{S}_{rc}^d$ with respect to the topologies $\tau_{W^r}$ and $\tau_{AW^r}$, respectively. We also introduce a new distance on the family of flowers
along with some observations about it, see (\ref{eq:flower-distance}). The paper concludes with some questions related to our newly proposed distance on the family of flowers and to the homeomorphism type of $(\mathcal{S}_{rc}^d,d_{AW^r})$ and its topological subspace $\mathcal{S}_{1}^d$.

\subsection{Preliminaries and notation}\label{sec:preliminaries}

We use standard notation $\overline{C}$, $\text{Int}(C)$ and $\partial C$ to denote, respectively, the closure, interior and boundary of a subset $C\subseteq\mathbb{R}^d$. By $\overline{\text{conv}}(C)$ we denote the closed convex hull, and by $\text{Aff}(C)$ the affine space generated by $C$. As usual, for $C_1,C_2\subseteq\mathbb{R}^d$ and $\lambda\in\mathbb{R}$, we define
\begin{align*}
    C_1+C_2:=&\left\{c_1+c_2:c_1\in C_1, c_2\in C_2\right\},\\
    \lambda C_1:=&\left\{\lambda c_1:c_1\in C_1\right\}.
\end{align*}
For $x,y\in\mathbb{R}^d$,  we write $[x,y]$ to denote the closed segment joining $x$ and $y$. In addition, for each $x\neq0$, the unitary vector $\frac{x}{\|x\|}$ is denoted by $\theta_x$. By $e_1,\ldots,e_d$ we denote the canonical orthonormal basis of $\mathbb{R}^d$. The vector space spanned by $a\in\mathbb{R}^d$ is denoted by $\langle a\rangle$. We will introduce additional notation as needed, but for basic definitions and conventions in convexity we follow \cite{Schneider2014}.

Recall that the family of star bodies in $\mathbb{R}^d$ is denoted by $\mathcal{S}_{rc}^{d}$. We write $\mathcal{S}_{rc, b}^{d}$ for the family of bounded star bodies and $\mathcal{S}_{rc, b,(0)}^{d}$ for those elements in $\mathcal{S}_{rc, b}^{d}$ containing the origin  in the interior. We emphasize that closed star sets $C\subseteq\mathbb{R}^d$ are always star bodies;
however, star bodies are not necessarily closed. For example,  for every open half-space $H\subset\mathbb{R}^d$ with the origin as a boundary point, $H\cup\{0\}$ is a non-closed star body. 

For $A_1,A_2\in\mathcal{S}_{rc}^{d}$, the \textit{radial sum} $A_1\tilde{+}A_2$ is the star body with radial function $\rho_{A_1\tilde{+}A_2}:=\rho_{A_1}+\rho_{A_2}$. Moreover, for every $x,y\in\mathbb{R}^d$, let $x\tilde{+}y$ be defined as
\begin{equation*}
    x\tilde{+}y:=\begin{cases}
       x+y &\text{ if } 0,x,y \text{ are collinear},\\
       0 &\text{ otherwise}.
       \end{cases}
\end{equation*}
Then, it can be directly verified that
$
A_1\tilde{+}A_2=\left\{a_1\tilde{+}a_2:a_1\in A_1, a_2\in A_2\right\}
$ 
and that 
$
A_1\tilde{+}A_2\subseteq A_1+A_2.
$ 
Additionally, for each $\lambda>0$ and $A\in\mathcal{S}_{rc}^{d}$, 
$\lambda A$
is a star body with $\rho_{\lambda A_1}=\lambda\rho_{A_1}$.
See for instance \cite[$\mathsection$ 14.1]{MoszynskaBook}.

On $\mathcal{S}_{rc, b}^{d}$, we are interested in the metric topology determined by the radial metric (also known as the radial Hausdorff distance \cite{Lutwak88}). For every pair $A_1,A_2\in\mathcal{S}_{rc, b}^{d}$,  the \textit{radial metric} $\delta(A_1,A_2)$ is given by any of the following expressions:
\begin{align}\label{defn:rHaudsdorff}
\delta(A_1,A_2)&=\sup_{\theta\in\mathbb{S}^{d-1}}|\rho_{A_1}(\theta)-\rho_{A_2}(\theta)|
\\\nonumber
&=\inf\left\{
\alpha>0:A_1\subseteq A_2\tilde{+}\alpha B_2^d\text{ and }A_2\subseteq A_1\tilde{+}\alpha B_2^d
\right\}.
\end{align}
For the basics results regarding star sets and the radial metric, we refer the reader to the books \cite{GardnerBook}, \cite[Chapter 14]{MoszynskaBook} and \cite[$\mathsection$ 9.3]{Schneider2014}, and the survey \cite{Moszynska2020}.

Given a non-empty closed set $C\subseteq\mathbb{R}^d$, 
its associated \textit{distance functional} $d(\cdot, C):\mathbb{R}^d\rightarrow[0,\infty)$ is  given by 
\begin{equation}\label{eq:distance-funct}
    d(x,C):=\inf_{c\in C}\|c-x\|. 
\end{equation}

For any pair $C_1,C_2$ of non-empty closed sets in $\mathbb{R}^d$, the \textit{excess} of $C_1$ \textit{over} $C_2$ is given by
$$
e(C_1,C_2):=\sup\left\{
d(x,C_2):x\in C_1
\right\}.
$$

The \textit{Hausdorff distance} between non-empty compact sets  $C,D\subseteq\mathbb{R}^d$ is given by any of the equivalent expressions below:
\begin{align*}
d_H(C,D)&=\max\left\{\sup_{x\in C}d(x,D), \sup_{x\in D}d(x,C)\right\}\\
&=\text{inf}\left\{\lambda>0:C\subseteq D+\lambda\mathbb{B},\ D\subseteq C+\lambda\mathbb{B}\right\}\\
&=\sup_{x\in\mathbb R^n}|d(x, C)-d(x,D)|
\end{align*}
See for instance \cite[\S 3.2]{Beer1993}. 

The following example, which appears in the proof of \cite[Theorem 14.3.4]{MoszynskaBook}, will be used frequently throughout this work. Therefore, we outline its key properties.

\begin{example}\label{exam:ejemploMoszynska}
    Let $L$ denote the vector space spanned by the canonical vector $e_d\in\mathbb{R}^d$ and let $a=\frac{1}{4}e_d$. For every $n\in\mathbb{N}$, let $S_n\subset\mathbb{S}^{d-1}$ be the $(d-2)$-dimensional sphere with center in $\left(1-\frac{1}{n+1}\right)e_d$ and such that the affine hyperplane $\text{Aff}(S_n)$ is orthogonal to $L$. 
    
    For every $n\in\mathbb{N}$, let $C_n$ be the cone with vertex $a$ containing $S_n$. Then the sequence of bounded star bodies $(A_n)_n\subseteq\mathcal{S}_{rc,b}^d$, defined by
    $$
    A_n=\overline{B_2^d\setminus\text{conv}(C_n)},
    $$
    satisfies the following properties:
    \begin{itemize}
        \item The radial function $\rho_{A_n}$ is continuous for every $n\in\mathbb{N}$.
        \item $(A_n)_n$ converges in the Hausdorff distance to $B_2^d$.
        \item $(A_n)_n$ does not converge in the radial metric to $B_2^d$. 
    \end{itemize}
\end{example}


Since we will work with unbounded star bodies and their associated radial maps, it is convenient to recall some basic definitions regarding maps taking values in the \textit{extended real line} $\overline{\mathbb{R}}:=[-\infty,\infty]$. 

Let $X\subseteq\mathbb{R}^d$ be a non-empty topological subspace, and let
$f:X\rightarrow\overline{\mathbb{R}}$ be a map. Then, $f$ is called \textit{proper} if it is somewhere finite and its values lie in $(-\infty,\infty]$.  The
\textit{effective domain}, $\text{dom} f$, is the set of points at which $f$ is finite. Therefore,
$$
\text{dom} f:=\left\{ x\in X: f(x)\in\mathbb{R}\right\}.
$$
Notice that radial maps $\rho_A$ associated to star bodies $A\neq\mathbb{R}^d$ are always proper. Particularly, $\text{dom} \rho_A\neq\emptyset$. 

A map $f:X\rightarrow\overline{\mathbb{R}}$ is called \textit{lower semicontinuous at}  $x\in X$ if for every sequence $(x_n)_{n}\subseteq X$ converging to $x$,
$$
\liminf_{n}f(x_n)\geq f(x).
$$  
In addition, $f$ is called \textit{lower semicontinuous} (simply, \textit{lsc}) if it is lower semicontinuous at every $x\in X$. Similarly, $f$ is called \textit{upper semicontinuous at} $x\in X$ if $\limsup_{n}f(x_n)\leq f(x)$ for every sequence $(x_n)_{n}\subseteq X$ converging to $x$. In this case, $f$ is called \textit{upper semicontinuous} (simply, \textit{usc}) whenever it is upper semicontinuous at every $x\in X$.

It  follows directly from the above definitions that a map $f:X\rightarrow\overline{\mathbb{R}}$ is \textit{continuous at} $x\in X$ if it is both lower semicontinuous and upper semicontinuous at $x$.  Consequently, $f$ is called \textit{continuous} whenever it is continuous at every $x\in X$. 

The preceding definitions concerning the continuity of extended real-valued functions are based on \cite{Beer1993}. For a comprehensive treatment of this topic, we refer the reader to this book as well as to \cite{RockWets1998}.

\section{The radial distance functional} 
\label{sec:radial-distance-functional}


In this section, we establish the main properties of the radial distance functionals associated to star bodies, see Definition \ref{defn:radialdist}. As we shall see, it can be understood as an analogue, in the setting of star bodies, to the standard distance functional for closed sets.

The next remark collects the basic properties of the radial distance. For each pair $A_1,A_2\in\mathcal{S}_{rc}^{d}$, the \textit{radial excess} of $A_1$ over $A_2$ is given by
$$
e_r(A_1,A_2):=\sup\left\{
d_r(a_1,A_2):a_1\in A_1
\right\}.
$$
It is worth noticing that the radial excess is not  symmetric with respect to $A_1$ and $A_2$, and that it takes values in $[0,\infty]$.  
\begin{remark}\label{rem:properties-d-r}
The following properties hold for all $A_1,A_2\in\mathcal{S}_{rc}^{d}$:
    \begin{itemize}
        \item [(P1)]$d_r(x,A_1)=\begin{cases}
            0 &\text{ if }x\in A_1,\\
            \|x\|-\rho_{A_1}\left(
            \theta_x\right) &\text{ if }x\notin A_1.
        \end{cases}$
        \item [(P2)] $d_r(x,A_1)=0$ if and only if $x\in A_1$. 
        \item [(P3)] $A_1=A_2$ if and only if  $d_r(x,A_1)=d_r(x,A_2)$ for all $x\in\mathbb{R}^d$.
        \item [(P4)] $d(x,A_1)\leq d_r(x,A_1)$ for all $x\in\mathbb{R}^d$.
        \item [(P5)] $e(A_1,A_2)\leq e_r(A_1,A_2)$.
        \item [(P6)] $e_r(A_1,A_2)=\sup_{x\in\mathbb{R}^d}\left\{d_r(x,A_2)-d_r(x,A_1)\right\}$.
    \end{itemize}
\end{remark}
\begin{proof}
(P1) If $x\in A_1$, then $x\in A_1\tilde{+}\varepsilon B_2^d$ for every $\varepsilon>0$. Thus $d_r(x,A_1)\leq\varepsilon$, and therefore $d_r(x,A_1)=0$.
If $x\notin A_1$, then $x\neq0$ and $\rho_{A_1}\left(\theta_x\right)<\|x\|$. Moreover, 
$$
x=\rho_{A_1}\left(\theta_x\right)\theta_x+\left(\|x\|-\rho_{A_1}\left(\theta_x\right)\right)\theta_x. 
$$
Thus $d_r(x,A_1)\leq\|x\|-\rho_{A_1}\left(\theta_x\right)$. To prove the reverse inequality, let $\alpha>0$ be such that $x\in A_1\tilde{+}\alpha B_2^d$. Clearly
$\|x\|\leq\rho_{A_1\tilde{+}\alpha B_2^d}\left(\theta_x\right)=\rho_{A_1}\left(\theta_x\right)+\alpha$. Hence 
$\alpha\geq\|x\|-\rho_{A_1}\left(\theta_x\right)$, which yields to 
$d_r(x,A_1)\geq\|x\|-\rho_{A_1}\left(\theta_x\right)$.

(P2) By (P1), we only need to check the only if part. Indeed, let $x\in\mathbb{R}^d$ be such that $d_r(x;A_1)=0$. Suppose by contradiction that $x\notin A_1$. Then 
$$
0=d_r(x,A)=\|x\|-\rho_{A_1}\left(\theta_x\right),
$$ 
and $\rho_{A_1}\left(\theta_x\right)=\|x\|$. Since $A_1\in\mathcal{S}_{rc}^{d}$,  we know that $x=\rho_{A_1}\left(\theta_x\right)\theta_x\in A_1$, which is a contradiction. Therefore $x\in A_1.$

(P3) Follows from (P2).

(P4) If $x\in A_1$, then $d_r(x,A_1)=d(x,A_1)=0$. If $x\notin A_1$, then $d(x,A)\leq\|x-\rho_{A_1}\left(\theta_x\right)\theta_x\|=d_r(x,A)$.

(P5) Follows from (P4).

(P6) Let us denote by $\lambda$ the supreme $\sup_{x\in\mathbb{R}^d}\left(d_r\left(x,A_2\right)-d_r\left(x,A_1\right)\right)$. By (P1), 
$d_r(a_1,A_2)=d_r(a_1,A_2)-d_r(a_1,A_1)\leq\lambda$ for all $a_1\in A_1$. Hence,
$e_r(A_1,A_2)\leq\lambda$. 
To prove the reverse inequality, notice that for every $x\in A_1$, we have that
$$
d_r(x,A_2)-d_r(x,A_1)=d_r(x,A_2)\leq e_r(A_1,A_2).
$$ 
Similarly, for every $x'\in A_2\setminus A_1$, 
$d_r(x',A_2)-d_r(x',A_1)=-d_r(x',A_1)\leq e_r(A_1,A_2)$. Finally, by (P1), for each $x''\notin A_1\cup A_2$, we have that
$$
d_r(x'',A_2)-d_r(x'',A_1)=\rho_{A_1}(\theta_{x''})-\rho_{A_2}(\theta_{x''}).
$$
Thus, if $\rho_{A_1}(\theta_{x''})\leq\rho_{A_2}(\theta_{x''})$, trivially $d_r(x'',A_2)-d_r(x'',A_1)\leq e_r(A_1,A_2)$. Otherwise, 
$
d_r\left(\rho_{A_1}\left(\theta_{x''}\right)\theta_{x''},A_2\right)=
\rho_{A_1}(\theta_{x''})-\rho_{A_2}(\theta_{x''})
$
and 
$$
d_r(x'',A_2)-d_r(x'',A_1)= d_r(\rho_{A_1}(\theta_{x''})\theta_{x''},A_2)\leq e_r(A_1,A_2).
$$ 
From this, it follows that $\lambda\leq e_r(A_1,A_2)$ as desired.
\end{proof}



In the next proposition, we show that, as occurs with the usual distance functional associated to closed sets in $\mathbb{R}^d$, the pointwise limits
of  sequences of radial distance functionals must be radial distance functionals too.


\begin{proposition}\label{prop:completez-sec-d_r}
    Let $(A_n)_n\subseteq\mathcal{S}_{rc}^d$ be a sequence. If $(d_r(\cdot,A_n))_n$ converges pointwise to $f:\mathbb{R}^d\rightarrow\mathbb{R}$, then $f=d_r(\cdot,A)$ for some $A\in\mathcal{S}_{rc}^d$.
\end{proposition}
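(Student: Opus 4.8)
The plan is to recover the candidate limit set $A$ directly from $f$ via radial functions, and then verify that $d_r(\cdot,A)=f$ using property (P1) of Remark \ref{rem:properties-d-r}. The key observation is that (P1) shows the value of a radial distance functional along a ray $\{t\theta : t\geq 0\}$ is completely controlled by the single number $\rho_{A_n}(\theta)$: for $t > \rho_{A_n}(\theta)$ one has $d_r(t\theta, A_n) = t - \rho_{A_n}(\theta)$, while for $0\leq t\leq \rho_{A_n}(\theta)$ one has $d_r(t\theta,A_n)=0$. So pointwise convergence of $d_r(\cdot,A_n)$ forces pointwise convergence, direction by direction, of the numbers $\rho_{A_n}(\theta)$.

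First I would fix $\theta\in\mathbb{S}^{d-1}$ and analyze the sequence $(\rho_{A_n}(\theta))_n$ in $[0,\infty]$. For $t>0$, the hypothesis gives $d_r(t\theta,A_n)\to f(t\theta)$. From (P1), $d_r(t\theta,A_n)=\max\{t-\rho_{A_n}(\theta),0\}$, so $\rho_{A_n}(\theta) = t - d_r(t\theta,A_n)$ whenever $d_r(t\theta,A_n)>0$, i.e. whenever $\rho_{A_n}(\theta)<t$. I claim $(\rho_{A_n}(\theta))_n$ converges in $[0,\infty]$ to a limit I will call $r(\theta)$: indeed, if it does not, pick $t$ strictly between $\liminf$ and $\limsup$ (finite case) or $t$ arbitrary large (if $\limsup=\infty$ but $\liminf<\infty$), and observe that $d_r(t\theta,A_n)$ alternately equals $0$ and $t-\rho_{A_n}(\theta)\geq$ a positive constant along suitable subsequences, contradicting convergence of $d_r(t\theta,A_n)$ to the finite number $f(t\theta)$. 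Hence $\rho_{A_n}(\theta)\to r(\theta)\in[0,\infty]$ for every $\theta$. Taking $t\to 0^+$ one also sees $f(t\theta)\to 0$, consistent with $f(0)=\lim_n d_r(0,A_n)=0$.

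Next I would define $A$ to be the star body with radial function $\rho_A := r$; that is, $A=\{t\theta: \theta\in\mathbb{S}^{d-1},\ 0\leq t\leq r(\theta)\}$ together with the requirement that $r(\theta)\theta\in A$ whenever $r(\theta)<\infty$, so $A\in\mathcal{S}_{rc}^d$ by construction (one checks it is a star set and radially closed). It remains to verify $d_r(x,A)=f(x)$ for all $x\in\mathbb{R}^d$. Write $x=t\theta_x$ with $t=\|x\|$ (the case $x=0$ is immediate). If $t\leq r(\theta_x)$, then $x\in A$ so $d_r(x,A)=0$; and since $\rho_{A_n}(\theta_x)\to r(\theta_x)\geq t$, for large $n$ we have $d_r(x,A_n)=0$ (when $t<r(\theta_x)$) or $d_r(x,A_n)\to 0$ anyway (boundary case $t=r(\theta_x)$, using $d_r(t\theta_x,A_n)=\max\{t-\rho_{A_n}(\theta_x),0\}\to\max\{t-r(\theta_x),0\}=0$), so $f(x)=\lim_n d_r(x,A_n)=0=d_r(x,A)$. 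If $t>r(\theta_x)$, then $r(\theta_x)<\infty$, $x\notin A$, and $d_r(x,A)=t-r(\theta_x)$ by (P1); on the other hand $d_r(x,A_n)=\max\{t-\rho_{A_n}(\theta_x),0\}\to\max\{t-r(\theta_x),0\}=t-r(\theta_x)$ since $t>r(\theta_x)$, so again $f(x)=d_r(x,A)$. This covers all cases.

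The main obstacle I anticipate is the careful handling of the limiting radial function when $r(\theta)=\infty$ for some directions and the boundary case $t=r(\theta_x)$: one must make sure the pointwise hypothesis on $f$ (with $f$ real-valued, per the statement) is genuinely enough to pin down $r(\theta)$ in $[0,\infty]$ and that the resulting $A$ is radially closed. The argument for convergence of $(\rho_{A_n}(\theta))_n$ is where one has to be most careful, exploiting the piecewise-linear ``$\max$'' shape from (P1) to rule out oscillation; everything afterwards is a direct application of (P1) to the constructed set $A$.
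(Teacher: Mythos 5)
Your proposal is correct and takes essentially the same route as the paper: recover the candidate set $A$ from the direction-wise limits of the radial functions and then verify $f=d_r(\cdot,A)$ case by case via (P1). The only organizational difference is that you first prove that $(\rho_{A_n}(\theta))_n$ converges in $[0,\infty]$ for \emph{every} $\theta$ (using the formula $d_r(t\theta,A_n)=\max\{t-\rho_{A_n}(\theta),0\}$ to rule out oscillation), which lets you define $\rho_A$ in one stroke and streamlines the verification that the paper instead carries out by splitting on whether $f$ vanishes identically along the ray and on whether $\liminf_n\rho_{A_n}(\theta_y)$ is finite.
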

\begin{proof}
    First, let us notice that for every $x\in\mathbb{R}^d$ with $f(x)>0$, there is $n_x\in\mathbb{N}$ such that $d_r(x,A_n)>0$ for all $n>n_x$. Hence, by (P1)-(P2), $x\notin A_n$ and $d_r(x,A_n)=\|x\|-\rho_{A_n}(\theta_x)$. Thus $\rho_{A_n}(\theta_x)<\infty$ for every $n>n_x$, and
    $$
    f(x)=\lim_{n}\left(\|x\|-\rho_{A_n}(\theta_x)\right),
    $$
    So, $\lim_{n}\rho_{A_n}(\theta_x)$ must be finite. Now, let $\alpha:\mathbb{S}^{d-1}\rightarrow[0,\infty]$ be given by
    \begin{equation*}
        \alpha(\theta)=\begin{cases}
        \lim_{n}\rho_{A_n}(\theta)&\text{ if }f(\lambda\theta)>0\text{ for some }\lambda>0,\\
        \infty&\text{ if }f(\lambda\theta)=0\text{ for all }\lambda\geq0.      \end{cases}
    \end{equation*}
    Observe that $\alpha$ is a well defined non-negative map. Consequently, there is $A\in\mathcal{S}_{rc}^d$ such that $\rho_A=\alpha$.
    Below, we shall prove that $f(\cdot)=d_r(\cdot,A)$.
    
    Suppose that $y\notin A$. Then
    $\rho_A(\theta_y)$ is finite, and $\rho_A(\theta_y)=\alpha(\theta_y)= \lim_{n}\rho_{A_n}(\theta_y)$. Moreover, by (P1), $d_r(y,A)=\|y\|-\lim_{n}\rho_{A_n}(\theta_y)>0$. Consequently, there is $n'_y\in\mathbb{N}$ such that $\rho_{A_n}(\theta_y)<\|y\|$ for all $n>n'_y$. Hence, from (P1) again, $d_r(y,A_n)=\|y\|-\rho_{A_n}(\theta_y)$ for all $n>n'_y$, and
    $$
    f(y)=\lim_{n}d_r(y,A_n)=\lim_{n}(\|y\|-\rho_{A_n}(\theta_y))=d_r(y,A).
    $$
    On the other hand, suppose that $y\in A$, i.e.,  $d_r(y,A)=0$. Notice that $f(y)=d_r(y,A)=0$ holds for $y=0$. For $y\neq 0$, we will consider separately two cases: $\liminf_n\rho_{A_n}(\theta_y)=\infty$ and $\liminf_n\rho_{A_n}(\theta_y)<\infty$. If $\liminf_n\rho_{A_n}(\theta_y)=\infty$, then $\lim_n\rho_{A_n}(\theta_y)=\infty$ and there is $N_y\in\mathbb{N}$ such that  $y\in A_n$
    for all $n>N_y$. Hence $d_r(y,A_n)=0$ for all $n>N_y$, and  $f(y)=0$.

    If $R:=\liminf_n\rho_{A_n}(\theta_y)<\infty$, then, for every $k\in\mathbb{N}$, there is $n_k>k$ such that 
    $\rho_{A_{n_k}}(\theta_y)<R+1$. Thus $(R+2)\theta_y\notin A_{n_k}$ for all $k$, and by (P1), 
    $$
    d_r((R+2)\theta_y,A_{n_k})=R+2-\rho_{A_{n_k}}(\theta_y)>1.
    $$
    It then follows that $f((R+2)\theta_y)>1$, which implies that $\alpha(\theta_y)=\lim_n\rho_{A_n}(\theta_y)$. Moreover, since $y\in A$ and $\alpha(\theta_y)=\rho_A(\theta_y)$, then $\|y\|\leq\lim_n\rho_{A_n}(\theta_y)$.
    
    Now, observe that if the last inequality is strict, then there exist $N'_y\in\mathbb{N}$ such that  $\|y\|<\rho_{A_n}(\theta_y)$ for all $n>N'_y$. Consequently $y\in A_n$
    for all $n>N'_y$, and $f(y)=0$ as required. Otherwise, if $\|y\|=\lim_n\rho_{A_n}(\theta_y)$, then one of the following options holds: (1) there is $M_y\in\mathbb{N}$ such that $\|y\|>\rho_{A_n}(\theta_y)$ for all $n>M_y$, or  (2) there exists a subsequence $(A_{n_j})_j$ such that 
    $\|y\|\leq\rho_{A_{n_j}}(\theta_y)$ for all $j$. In the first case, $d_r(y,A_n)=\|y\|-\rho_{A_n}(\theta_y)$  for all $n>M_y$. Hence
    $$
    f(y)=\lim_{n}(\|y\|-\rho_{A_n}(\theta_y))=0.
    $$
    In the second case,
    $d_r(y,A_{n_j})=0$ for all $j$. Thus, $f(y)=\lim_{j}d_r(y,A_{n_j})=0$.
    This shows that $f(\cdot)=d_r(\cdot,A)$, as desired.
 \end{proof}
 
Below, we show that the radial distance functional provides a new way to calculate the radial metric between bounded star bodies. 

\begin{proposition}\label{prop:caract-dHr-distanciaradial}
    For every $A_1,A_2\in\mathcal{S}_{rc}^{d}$ bounded, 
    \begin{equation}\label{eq:radialHauss-radialdist}
        \delta(A_1,A_2)=
    \sup_{x\in\mathbb{R}^d}\left|d_r(x,A_1)-d_r(x,A_2)\right|
    =\max\left\{e_r(A_1,A_2),e_r(A_2,A_1)\right\}.
    \end{equation}
\end{proposition}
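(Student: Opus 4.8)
The plan is to establish the chain of equalities in (\ref{eq:radialHauss-radialdist}) by first reducing the rightmost equality to Remark \ref{rem:properties-d-r}, and then proving that the radial metric coincides with the supremum of the pointwise differences of radial distance functionals. The equality $\sup_{x\in\mathbb{R}^d}\left|d_r(x,A_1)-d_r(x,A_2)\right|=\max\left\{e_r(A_1,A_2),e_r(A_2,A_1)\right\}$ is immediate from (P6) of Remark \ref{rem:properties-d-r}: indeed, $\sup_{x}|d_r(x,A_1)-d_r(x,A_2)|=\max\{\sup_x(d_r(x,A_1)-d_r(x,A_2)),\sup_x(d_r(x,A_2)-d_r(x,A_1))\}$, and by (P6) these two suprema are exactly $e_r(A_2,A_1)$ and $e_r(A_1,A_2)$.

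For the main equality $\delta(A_1,A_2)=\sup_{x\in\mathbb{R}^d}\left|d_r(x,A_1)-d_r(x,A_2)\right|$, I would argue directionally using (P1) of Remark \ref{rem:properties-d-r} and the first expression for $\delta$ in (\ref{defn:rHaudsdorff}), namely $\delta(A_1,A_2)=\sup_{\theta\in\mathbb{S}^{d-1}}|\rho_{A_1}(\theta)-\rho_{A_2}(\theta)|$; boundedness guarantees both radial functions are finite everywhere, so all quantities involved are finite. To see ``$\geq$'': fix $\theta\in\mathbb{S}^{d-1}$, and without loss of generality suppose $\rho_{A_1}(\theta)\leq\rho_{A_2}(\theta)$. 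Take the point $x=\rho_{A_2}(\theta)\theta$. Then $x\in A_2$ since $A_2$ is radially closed, so $d_r(x,A_2)=0$ by (P2), while $d_r(x,A_1)=\|x\|-\rho_{A_1}(\theta)=\rho_{A_2}(\theta)-\rho_{A_1}(\theta)$ by (P1) (using that $x\notin A_1$ when the inequality is strict; the equality case is trivial). Hence $|d_r(x,A_1)-d_r(x,A_2)|=|\rho_{A_1}(\theta)-\rho_{A_2}(\theta)|$, and taking the supremum over $\theta$ gives $\sup_x|d_r(x,A_1)-d_r(x,A_2)|\geq\delta(A_1,A_2)$.

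For the reverse inequality ``$\leq$'': fix an arbitrary $x\in\mathbb{R}^d$, and we must show $|d_r(x,A_1)-d_r(x,A_2)|\leq\delta(A_1,A_2)$. If $x=0$ both distances vanish. For $x\neq0$, write $\theta=\theta_x$, and distinguish cases according to membership of $x$ in $A_1,A_2$, using (P1). If $x\in A_1\cap A_2$ the difference is $0$. If $x\notin A_1\cup A_2$, then $d_r(x,A_1)-d_r(x,A_2)=(\|x\|-\rho_{A_1}(\theta))-(\|x\|-\rho_{A_2}(\theta))=\rho_{A_2}(\theta)-\rho_{A_1}(\theta)$, whose absolute value is at most $\delta(A_1,A_2)$. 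If $x\in A_1\setminus A_2$, then $d_r(x,A_1)=0$ and $d_r(x,A_2)=\|x\|-\rho_{A_2}(\theta)$; since $x\in A_1$ gives $\|x\|\leq\rho_{A_1}(\theta)$, we get $0\leq d_r(x,A_2)\leq\rho_{A_1}(\theta)-\rho_{A_2}(\theta)\leq\delta(A_1,A_2)$, and symmetrically for $x\in A_2\setminus A_1$. Combining all cases yields $\sup_x|d_r(x,A_1)-d_r(x,A_2)|\leq\delta(A_1,A_2)$, which together with the previous paragraph completes the proof.

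I do not anticipate a serious obstacle here; the argument is a routine case analysis once (P1)--(P2) and (P6) of Remark \ref{rem:properties-d-r} are in hand. The only point requiring a little care is the use of \emph{boundedness}: it ensures $\rho_{A_1},\rho_{A_2}$ (and hence $\delta(A_1,A_2)$) are finite, so that the manipulations $d_r(x,A_1)-d_r(x,A_2)=\rho_{A_2}(\theta)-\rho_{A_1}(\theta)$ never run into an $\infty-\infty$ ambiguity, and so that in the ``$\geq$'' direction the witnessing point $\rho_{A_2}(\theta)\theta$ actually lies in $\mathbb{R}^d$. It is worth remarking at the end that this recovers the classical inequality (\ref{eq:d-H-menor-rd-H}) via (P4): since $d(x,A_i)\leq d_r(x,A_i)$ and the Hausdorff distance admits the analogous supremum representation, one gets $d_H(A_1,A_2)=\sup_x|d(x,A_1)-d(x,A_2)|\leq\sup_x|d_r(x,A_1)-d_r(x,A_2)|=\delta(A_1,A_2)$ for compact star bodies (though this is not strictly needed for the statement).
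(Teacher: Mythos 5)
Your proof is correct, but for the main equality you take a genuinely different (though closely related) route from the paper. Both you and the paper dispatch the second equality, $\sup_{x}|d_r(x,A_1)-d_r(x,A_2)|=\max\{e_r(A_1,A_2),e_r(A_2,A_1)\}$, directly from (P6). For the remaining leg, the paper proves $\delta(A_1,A_2)=\max\{e_r(A_1,A_2),e_r(A_2,A_1)\}$ using the \emph{inclusion} characterization of the radial metric, $\delta(A_1,A_2)=\inf\{\alpha>0: A_1\subseteq A_2\tilde{+}\alpha B_2^d,\ A_2\subseteq A_1\tilde{+}\alpha B_2^d\}$, translating $d_r(x,A_2)\leq\alpha$ back and forth into containments $A_1\subseteq A_2\tilde{+}\alpha B_2^d$. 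You instead prove $\delta(A_1,A_2)=\sup_x|d_r(x,A_1)-d_r(x,A_2)|$ directly from the \emph{radial-function} characterization $\delta=\sup_\theta|\rho_{A_1}(\theta)-\rho_{A_2}(\theta)|$, with the witness point $\rho_{A_2}(\theta)\theta$ for one direction and a four-case analysis via (P1)--(P2) for the other. Your version is more computational but makes the role of radial closedness and boundedness explicit at each step; the paper's version is shorter and reuses the definition of $d_r$ as an infimum over inflations, which is arguably more in the spirit of how $d_{AW^r}$ is later manipulated. Both are complete.

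One caveat on your closing aside: the chain $d_H(A_1,A_2)=\sup_x|d(x,A_1)-d(x,A_2)|\leq\sup_x|d_r(x,A_1)-d_r(x,A_2)|$ does not follow from (P4) alone, since $a_i\leq b_i$ for $i=1,2$ does not imply $|a_1-a_2|\leq|b_1-b_2|$. The inequality (\ref{eq:d-H-menor-rd-H}) is instead recovered from (P5) together with the excess formulas $d_H=\max\{e(A_1,A_2),e(A_2,A_1)\}\leq\max\{e_r(A_1,A_2),e_r(A_2,A_1)\}=\delta$. Since you flag this remark as not needed for the statement, it does not affect the validity of your proof, but the derivation as written should be corrected or dropped.
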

\begin{proof} 
    Observe that the second equality follows directly from (P6). 
    Consequently, to prove the result, it is enough to show that
    $$
    \delta(A_1,A_2)=\max\left\{e_r(A_1,A_2),e_r(A_2,A_1)\right\}.
    $$
    Let us define $\eta:=\max\left\{
    e_r(A_1,A_2),e_r(A_2,A_1)\right\}$. Note that, by (P2), $\eta=0$ if and only if $A_1=A_2$, which is equivalent to $\delta(A_1,A_2)=0$. 
    
    For the case $\eta>0$, let $\alpha>0$ be such that $A_i\subseteq A_j\tilde{+}\alpha B_2^d$ for $i,j\in\{1,2\}$ with $i\neq j$. Then, 
    for every $x\in A_1$, we have that $x\in A_2\tilde{+}\alpha B_2^d$ and $d_r(x,A_2)\leq\alpha.$ Thus $e_r(A_1,A_2)\leq\alpha$. Analogously, it follows that $e_r(A_2,A_1)\leq\alpha$. Consequently
    $\eta\leq\alpha$, which implies that $\eta\leq\delta(A_1,A_2)$.
    
    To establish the reverse inequality, observe that $d_r(x,A_2)\leq\eta$ for all $x\in A_1$. Hence, $A_1\subseteq A_2\tilde{+}\eta B_2^d$. Similarly, $A_2\subseteq A_1\tilde{+}\eta B_2^d$. Therefore $\delta(A_1,A_2)\leq\eta$, completing the proof.
\end{proof}

\subsection{Inclusions into vector spaces of real-valued functions}
\label{sec:encajes-sin-estruct}
Let us denote by $F(\mathbb{R}^d,\mathbb{R})$ the vector space of functions $f:\mathbb{R}^d\rightarrow\mathbb{R}$, and 
by $C(\mathbb{R}^d,\mathbb{R})$ the linear subspace of continuous functions.
Notice that from (P3) in Remark \ref{rem:properties-d-r}, it follows that the map 
\begin{align}\label{eq:encaje1}
     \mathcal{S}_{rc}^{d}&\longrightarrow F(\mathbb{R}^d,\mathbb{R})\\
     A&\longrightarrow d_r(\cdot,A)\nonumber
\end{align}
is injective. In particular, the identification $A\leftrightarrow d_r(\cdot,A)$ allows us to consider the family of star bodies as a subset of $F(\mathbb{R}^d,\mathbb{R})$ consisting of non-negative functions. 

It is then natural to ask for the relationship between $\mathcal{S}_{rc}^{d}$ and $C(\mathbb{R}^d,\mathbb{R})$, under the identification (\ref{eq:encaje1}). We highlight that $d_r(\cdot,A)$ is not always continuous for $A\in\mathcal{S}_{rc}^{d}$. In fact, consider a segment $[0,a]$, with $a\in\mathbb{R}^{d}\setminus\{0\}$, then
\begin{align*}
    d_r(x,[0,a])=\begin{cases}
        0,\text{ if }x\in [0,a];\\
        \|x\|-\|a\|,\text{ if } x=\lambda a\text{ and }\lambda>1;\\
        \|x\|, \text{ otherwise}.
    \end{cases}
\end{align*}
is discontinuous at any  non-zero $x\in[0,a]$. This is a notably difference with the case of closed sets in $\mathbb{R}^d$ and the 
usual distance functional. In the next proposition, we describe all star bodies $A$ for which $d_r(\cdot,A)$ is continuous.

Recall that $\mathcal{S}_{1}^{d}$ denote the family of star bodies $A\subseteq\mathbb{R}^d$ with continuous radial function $\rho_A$. We define the following subfamilies of $\mathcal{S}_{1}^{d}$:
 $\mathcal{S}_{1,b}^{d}$, consisting of bounded star bodies; $\mathcal{S}_{1,(0)}^{d}$, composed by 
  star bodies containing the origin in the interior; and $\mathcal{S}_{1, b (0)}^{d}:=\mathcal{S}_{1,(0)}^{d}\cap\mathcal{S}_{1,b}^{d}$.
  

\begin{proposition}\label{prop:car-starb-conti-distance}
Let $A\in \mathcal{S}_{rc}^{d}$. Then, the following hold
\begin{itemize} 
    \item [(1)] If $\rho_A:\mathbb{S}^{d-1}\rightarrow[0,\infty]$ is continuous, then $A$ is closed.
    \item [(2)] $d_r(\cdot,A)$ is continuous if and only if $\rho_A$ is continuous. Particularly, $d_r(\cdot,A)$ is continuous if and only if $A\in\mathcal{S}_{1}^{d}$.
\end{itemize}
\end{proposition}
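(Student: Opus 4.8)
The plan is to reduce both parts to the formula supplied by (P1). First I would record the elementary fact that, for a star body $A$ and $x\neq 0$, one has $x\notin A$ if and only if $\rho_A(\theta_x)<\|x\|$: the equality $\rho_A(\theta_x)=\|x\|<\infty$ would force $x=\rho_A(\theta_x)\theta_x\in A$ by radial closedness, while $\rho_A(\theta_x)>\|x\|$ would produce some $\mu>\|x\|$ with $\mu\theta_x\in A$, whence $x\in A$ because $A$ is a star set. Combined with (P1) this yields
\begin{equation*}
d_r(x,A)=\max\bigl\{0,\ \|x\|-\rho_A(\theta_x)\bigr\}\qquad\text{for all }x\neq 0,
\end{equation*}
with the convention $\|x\|-\infty=-\infty$. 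I would also note that $d_r(x,A)\le\|x\|$ for every $x$, since $\|x\|B_2^d=\{0\}\,\tilde{+}\,\|x\|B_2^d\subseteq A\,\tilde{+}\,\|x\|B_2^d$; hence $d_r(\cdot,A)$ is automatically continuous at the origin for every star body $A$.

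For (1), I would take $x_n\in A$ with $x_n\to x$ and show $x\in A$. This is clear for $x=0$. If $x\neq 0$, then $x_n\neq 0$ and $\theta_{x_n}\to\theta_x$ for large $n$, and from $x_n\in A$ we get $\|x_n\|\le\rho_A(\theta_{x_n})$; passing to the limit and using continuity of $\rho_A$ into $[0,\infty]$ gives $\|x\|\le\rho_A(\theta_x)$. If $\rho_A(\theta_x)=\infty$, then every nonnegative multiple of $\theta_x$ lies in $A$, so $x\in A$; if $\rho_A(\theta_x)<\infty$, then $x$ lies on the segment $[0,\rho_A(\theta_x)\theta_x]$ and $\rho_A(\theta_x)\theta_x\in A$ by radial closedness, so $x\in A$ again. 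Thus $A$ is closed.

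For the ``if'' direction of (2), assume $\rho_A$ is continuous; continuity of $d_r(\cdot,A)$ at $0$ is already known, so fix $x\neq 0$. If $\rho_A(\theta_x)=\infty$, continuity of $\rho_A$ gives a neighbourhood $U$ of $\theta_x$ with $\rho_A>\|x\|+1$ on $U$, hence a neighbourhood of $x$ on which $\|y\|<\rho_A(\theta_y)$, i.e. on which $d_r(\cdot,A)\equiv 0$. If $\rho_A(\theta_x)<\infty$, then continuity into $[0,\infty]$ forces $\rho_A$ to be finite and continuous as a real-valued function on some neighbourhood of $\theta_x$, so $y\mapsto\|y\|-\rho_A(\theta_y)$ is continuous near $x$; by the displayed formula, $d_r(\cdot,A)$ is then the pointwise maximum of $0$ and a continuous function there, hence continuous at $x$. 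This proves continuity of $d_r(\cdot,A)$.

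For the ``only if'' direction, assume $d_r(\cdot,A)$ is continuous and let $\theta_n\to\theta$ in $\mathbb{S}^{d-1}$. If $\rho_A(\theta)<\infty$, set $\lambda:=\rho_A(\theta)+1$, so that $\lambda\theta\notin A$ and $d_r(\lambda\theta,A)=1>0$; continuity of $d_r(\cdot,A)$ gives $\lambda\theta_n\notin A$ for large $n$, whence $d_r(\lambda\theta_n,A)=\lambda-\rho_A(\theta_n)\to\lambda-\rho_A(\theta)$, i.e. $\rho_A(\theta_n)\to\rho_A(\theta)$. If $\rho_A(\theta)=\infty$ but $\rho_A(\theta_n)\not\to\infty$, I would pass to a subsequence with $\rho_A(\theta_{n_k})\le M<\infty$; then $(M+1)\theta_{n_k}\notin A$ gives $d_r\bigl((M+1)\theta_{n_k},A\bigr)\ge 1$, while $(M+1)\theta\in A$ gives $d_r\bigl((M+1)\theta,A\bigr)=0$, contradicting continuity. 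Hence $\rho_A(\theta_n)\to\rho_A(\theta)$ and $\rho_A$ is continuous; the final assertion is then just the definition of $\mathcal{S}_1^d$. The step I expect to demand the most care is the bookkeeping around directions where $\rho_A$ takes the value $\infty$ — both in the closedness argument and in reconciling the $[0,\infty]$-valued radial function with the real-valued functional $d_r(\cdot,A)$.
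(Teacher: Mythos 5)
Your proof is correct and follows essentially the same route as the paper: part (1) is the same limit argument on radial functions, and part (2) rests on the same reduction of $d_r(\cdot,A)$ to $\rho_A$ via (P1) together with the equivalence $x\notin A\Leftrightarrow\rho_A(\theta_x)<\|x\|$. Your packaging of (P1) as $d_r(x,A)=\max\{0,\|x\|-\rho_A(\theta_x)\}$ lets you handle the forward direction of (2) by a clean neighbourhood argument (max of $0$ and a locally continuous function) instead of the paper's case-by-case subsequence/contradiction argument, but this is a modest streamlining rather than a different method.
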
 
\begin{proof} (1) Let $x\in\overline{A}$. Clearly, for $x=0$, $x\in A$. Now consider $x\neq0$, and let $(a_n)_n\subseteq A\setminus\{0\}$ be a sequence converging to $x$. Note that if $\rho_{A}(\theta_{a_n})=\infty$ for all $n>n_0$, then, by continuity of $\rho_A$, $\rho_A(\theta_x)=\infty$. Hence $x\in A$. 

On the other hand, if there is a subsequence $(a_{n_k})_k$ with $\rho_{A}(\theta_{n_k})<\infty$ for all $k$, then $\|a_{n_k}\|\leq\rho_{A}\left(\theta_{a_{n_k}}\right)$. Thus, again by continuity of $\rho_{A}$, $\|x\|\leq\rho_{A}\left(\theta_x\right)$, which yields to $x\in A$. Therefore $A$ is closed.


(2) Let us suppose that $\rho_A$ is continuous and let $x\in\mathbb{R}^d$  be arbitrary. Consider a sequence $(x_n)_n\subseteq\mathbb{R}^d$ converging to $x$. Notice that
\begin{equation}\label{eq:aux-continuity-drA-radial-map2}
    d_r(x_n,A)\leq\|x_n\|\leq\|x_n-x\|+ \|x\|
\end{equation}
holds for every $n\in\mathbb{N}$ and $x$. 
Therefore, $(d_r(x_n,A))_n$ is bounded. 
We begin by proving continuity of $d_r(\cdot,A)$ when $d_r(x,A)=0$, i.e., $x\in A$. If $x=0$, then  (\ref{eq:aux-continuity-drA-radial-map2}) implies directly that $\lim_n d_r(x_n,A)=0$. 

For $x\neq 0$, we may suppose without lost of generality that there is $n_x\in\mathbb{N}$ such that $x_n\neq0$ for all $n>n_x$. 
In this case, we proceed by contradiction and suppose that $\limsup_n{d_r(x_n,A)}>0$. Then, there exist $\eta_0>0$ and a subsequence $(x_{n_k})$ such that $d_r(x_{n_k},A)>\eta_0$ for all $k$. Consequently $x_{n_k}\notin A$ and $\rho_A(\theta_{x_{n_k}})<\|x_{n_k}\|$. Since $x\in A$, $\|x\|\leq\rho_{A}(\theta_x)$. Thus, by continuity of $\rho_A$, 
$$
\|x\|\leq\rho_{A}(\theta_x)=\lim_{k\to\infty}\rho_A(\theta_{x_{n_k}})\leq\|x\|.
$$
The later in combination with (P1) proves that
$$
\eta_0\leq\lim_{k\to\infty}d_r(x_{n_k},A)=
\lim_{k\to\infty}\left\|x_{n_k}\right\|-\rho_{A}\left(\theta_{x_{n_k}}\right)
=\left\|x\right\|-\rho_{A}(\theta_{x})=0,
$$

which is a contradiction. This shows that $\limsup_n{d_r(x_n,A)}=0$, and therefore $\lim_n d_r(x_n,A)=0$ as required.

For the case $d_r(x,A)>0$, let us suppose without lost of generality that $x_n\neq0$ for all $n$.  Also notice that, by (P1)-(P2), $x\notin A$ and
$$
d_r(x,A)=\|x\|-\rho_{A}\left(\theta_x\right).
$$
By continuity of $\rho_A$, there exists $n'_x\in\mathbb{N}$ such that
$x_n\notin A$ for all $n>n'_x$. Otherwise, there is a subsequence $(x_{n_i})_{i}\subseteq A$. Particularly $\|x_{n_i}\|\leq\rho_{A}\left(\theta_{x_{n_i}}\right)$, and 
$\|x\|\leq\rho_{A}\left(\theta_{x}\right)<\|x\|$. which is a contradiction. Therefore, by (P2), 
$$
d_r(x_n,A)=\|x_n\|-\rho_{A}\left(\theta_{x_n}\right)\text{ for all }n\geq n'_x.
$$
Hence, again by continuity of $\rho_{A}$, $\lim_{n}d_r(x_n,A)=\|x\|-\rho_{A}\left(\theta_{x}\right)$. This proves that $d_r(\cdot,A)$ is continuous.

To prove the reverse implication, suppose that $d_r(\cdot,A)$ is continuous, and let $\theta\in\mathbb{S}^{d-1}$ and $(\theta_n)_n\subseteq\mathbb{S}^{d-1}$ be such that $(\theta_n)_n$ converges to $\theta$. Suppose that $\rho_A(\theta)$ is finite. By (P1), 
$$
d_r\left((\rho_A(\theta)+\varepsilon)\theta,A\right)=\varepsilon,
\text{ for every }\varepsilon>0.
$$
From the continuity of $d_r(\cdot,A)$,  it follows that $\lim_{n}d_r\left((\rho_A(\theta)+\varepsilon)
\theta_n,A\right)=\varepsilon$. Therefore, there exists $M\in\mathbb{N}$ such that $d_r\left((\rho_A(\theta)+\varepsilon)\theta_n,A\right)>0$ for all $n\geq M$. By (P1)-(P2), we have that 
$$
d_r\left((\rho_A(\theta)+\varepsilon)\theta_n,A\right)=\rho_A(\theta)+\varepsilon-\rho_A(\theta_n),
$$
and the sequence $\left(\rho_A(\theta)+\varepsilon-\rho_A(\theta_n)\right)_n$ goes to $\varepsilon$. 
Since $\varepsilon>0$ is arbitrary, then $\lim_{n}\rho_A(\theta_n)=\rho_A(\theta)$.

For the case $\rho_A(\theta)=\infty$, we shall prove that 
$\liminf\rho_A(\theta_n)=\infty$. From this, it follows that $(\rho_A(\theta_n))_n$ converges to $\infty$. Indeed,
since $\lambda\theta\in A$ for every $\lambda>0$, then
$d_r(\lambda\theta,A)=0$. Hence, by continuity of $d_r(\cdot,A)$, 
\begin{equation}\label{eq:auxprop-car-starb-conti-distance}
    \lim_{n\rightarrow\infty}d_r(\lambda\theta_n,A)=0\text{ for all } \lambda>0.
\end{equation}
We will proceed by contradiction and suppose that $\liminf\rho_A(\theta_n)<\infty$. In this case, there is a sub-sequence $(\theta_{n_i})_i$ such that $\sup_{i}\rho_A(\theta_{n_i})<R$, for some $R>0$. Then, by (P1),
$$
d_r\left(R\theta_{n_i},A\right)=R-\rho_A\left(\theta_{n_i}\right)> 
R-\sup_{i}\rho_A(\theta_{n_i})>0.
$$
Hence $\lim_{i}d_r\left(R\theta_{n_i},A\right)>0$,
which is a contradiction with equality (\ref{eq:auxprop-car-starb-conti-distance}).
This proves that $\rho_A$ is continuous. 

To finish the proof, observe that the fact that $d_r(\cdot,A)$ is continuous iff $A\in\mathcal{S}_{1}^{d}$ follows directly from what we have just proved and from (1) of this proposition.
\end{proof}

\section{A Wijsman type topology}
\label{sec:WijsmanTop}

In Section  \ref{sec:encajes-sin-estruct}, we 
established an identification of  $\mathcal{S}_{rc}^d$ as a subset of $F(\mathbb{R}^d,\mathbb{R})$, see (\ref{eq:encaje1}). Here, we further explore this relationship by introducing a weak topology on $\mathcal{S}_{rc}^d$, generated by 
the family of radial distance functionals. 

For every $x\in\mathbb{R}^d$, let $d_r(x,\cdot)$ be the map defined by 
\begin{align}\label{eq:functionalsradialWijsmantop}
    d_r(x,\cdot):\mathcal{S}_{rc}^d&\longrightarrow\mathbb{R}\\
    A&\longrightarrow d_r(x,A).\nonumber
\end{align}


\begin{definition}\label{defn:radial-WijsmanTop}
 The radial Wijsman topology $\tau_{W^r}$ on $\mathcal{S}_{rc}^d$ is the weak topology generated by the family $\left\{ d_r(x,\cdot):x\in\mathbb{R}^d\right\}$. That is, $\tau_{W^r}$ is the weakest topology on $\mathcal{S}_{rc}^d$ such that each $d_r(x,\cdot):\mathcal{S}_{rc}^d\rightarrow\mathbb{R}$ is continuous.
\end{definition}

In a similar way, we define \textit{the radial Wijsman topology on}  $\mathcal{S}_1^d$ and $\mathcal{S}_{1,b}^d$ as the weak topology on $\mathcal{S}_1^d$ (resp. $\mathcal{S}_{1,b}^d$) generated by the family 
$\{d_r(x,\cdot):\mathcal{S}_1^d\rightarrow\mathbb{R}\}_{x\in\mathbb{R}^d}$ (resp. $\mathcal{S}_{1,b}^d$).
It can be directly verified that this topology coincides with the subspace topology that  $\mathcal{S}_1^d$ and $\mathcal{S}_{1,b}^d$ inherit from 
$(\mathcal{S}_{rc}^d,\tau_{W^r})$.  Clearly, it also agrees with the topology that $\mathcal{S}_{1,b}^d$ inherits from $(\mathcal{S}_{1}^d,\tau_{W^r})$.


It is well-known that the convergence in a weak topology amounts to the pointwise convergence of the defining family of functions. Particularly, for the radial Wijsman topology, a net $(A_i)_{i\in I}\subset\mathcal{S}_{rc}^d$ converges to some $A\in\mathcal{S}_{rc}^d$ if and only if $(d_r(\cdot,A_i)_i)_{i\in I}$ converges pointwise to $d_r(\cdot,A)$. That is,
\begin{equation}\label{eq:pointwise-convergence-rdistance}
 \tau_{W^r}\text{-}\lim_{i}A_i=A\textit{ iff }\lim_{i}d_r(x,A_i)=d_r(x,A),\textit{ for every }x\in\mathbb{R}^d.
\end{equation}

Notice that, under the identification  (\ref{eq:encaje1}), 
the radial Wijsman topology  is the topology that $\mathcal{S}_{rc}^d$ inherits from $F(\mathbb{R}^d,\mathbb{R})$, when the latter is equipped with the topology of pointwise convergence (see e.g., \cite[Definition VII.1]{Nagatabook}).

Below, we establish a characterization of convergence in the radial Wijsman topology, formulated in terms of the pointwise convergence of the associated radial functions. As we will see throughout this work, this characterization plays a crucial role in simplifying proofs related to $\tau_{W^r}$.




 
For $A\in\mathcal{S}_{rc}^d$ (resp. $\mathcal{S}_{1}^d$ and $\mathcal{S}_{1,b}^d$), 
let us write $\rho(\theta,A):=\rho_{A}(\theta)$ and consider the family of radial functions
$\left\{\rho(\theta,\cdot):\mathcal{S}_{rc}^d\rightarrow[0,\infty]\right\}_{\theta\in\mathbb{S}^{d-1}}$ (resp. $\mathcal{S}_{1}^d$ and $\mathcal{S}_{1,b}^d$). The weak topology  on $\mathcal{S}_{rc}^d$ (resp. $\mathcal{S}_{1}^d$ and $\mathcal{S}_{1,b}^d$) determined by this family of maps is called \textit{the topology of pointwise convergence of radial functions}. The next theorem shows that this topology agrees with $\tau_{W^r}$ on  $\mathcal{S}_{rc}^d$.

\begin{theorem}\label{thm:caract-rWijsmanTopology}
    Let $(A_i)_{i\in I}\subset\mathcal{S}_{rc}^d$ be a net and  $A\in\mathcal{S}_{rc}^d$. Then, $(A_i)_{i\in I}$ is $\tau_{W^r}$-convergent  to $A$ if and only if  $(\rho_{A_i}(\cdot))_i$ converges pointwise to $\rho_{A}(\cdot)$. The same holds for $\mathcal{S}_{1}^d$. 
\end{theorem}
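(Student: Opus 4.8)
The plan is to route both implications through a single pointwise identity linking the radial distance functional with the radial function. First I would record that, by (P1) of Remark~\ref{rem:properties-d-r} together with radial closedness of $A$ (which gives $x\in A$ iff $\|x\|\le\rho_A(\theta_x)$), one has $d_r(0,A)=0$ and, for every $x\neq 0$,
\[
\|x\|-d_r(x,A)=\min\{\|x\|,\rho_A(\theta_x)\}.
\]
Indeed, if $x\in A$ both sides equal $\|x\|$, and if $x\notin A$ both sides equal $\rho_A(\theta_x)<\|x\|$; the right-hand side is read in $[0,\infty]$ with its order topology, and it is a finite real number because $d_r$ is real-valued. Everything else is an exercise in pushing this identity through the continuous clamping map $t\mapsto\min\{c,t\}$ on $[0,\infty]$.

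For the ``if'' direction, assume $\rho_{A_i}(\theta)\to\rho_A(\theta)$ in $[0,\infty]$ for every $\theta\in\mathbb{S}^{d-1}$, and fix $x\in\mathbb{R}^d$. If $x=0$ there is nothing to prove. If $x\neq0$, continuity of $t\mapsto\min\{\|x\|,t\}$ on $[0,\infty]$ gives $\min\{\|x\|,\rho_{A_i}(\theta_x)\}\to\min\{\|x\|,\rho_A(\theta_x)\}$; subtracting these finite quantities from $\|x\|$ and invoking the identity yields $d_r(x,A_i)\to d_r(x,A)$. By (\ref{eq:pointwise-convergence-rdistance}) this is exactly $\tau_{W^r}$-convergence of $(A_i)_{i\in I}$ to $A$.

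For the ``only if'' direction, assume $d_r(x,A_i)\to d_r(x,A)$ for all $x$, and fix $\theta\in\mathbb{S}^{d-1}$. Testing the identity at the points $\lambda\theta$ with $\lambda>0$ gives $\min\{\lambda,\rho_{A_i}(\theta)\}\to\min\{\lambda,\rho_A(\theta)\}$ for every $\lambda>0$. If $\rho_A(\theta)=\infty$, then for each $R>0$ the choice $\lambda=R$ forces $\rho_{A_i}(\theta)>R-1$ eventually, so $\liminf_i\rho_{A_i}(\theta)=\infty$, i.e.\ $\rho_{A_i}(\theta)\to\infty$. If $r:=\rho_A(\theta)<\infty$, pick any $\lambda>r$; then $\min\{\lambda,\rho_{A_i}(\theta)\}\to r<\lambda$, so eventually $\min\{\lambda,\rho_{A_i}(\theta)\}<\lambda$, which can only occur when $\rho_{A_i}(\theta)<\lambda$ and the minimum equals $\rho_{A_i}(\theta)$; hence $\rho_{A_i}(\theta)\to r=\rho_A(\theta)$. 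This establishes pointwise convergence of the radial functions. The statement for $\mathcal{S}_1^d$ is then immediate: by the remarks preceding the theorem, $\tau_{W^r}$ on $\mathcal{S}_1^d$ is the subspace topology inherited from $(\mathcal{S}_{rc}^d,\tau_{W^r})$, and pointwise convergence of radial functions is likewise inherited, so the equivalence just proved, read for nets and limits lying in $\mathcal{S}_1^d$, is the claim.

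I expect the only genuinely delicate point to be the ``only if'' direction: the naive move is to write $\rho_A(\theta)=\sup\{\lambda\ge0:d_r(\lambda\theta,A)=0\}$ and pass to the limit, but suprema are not continuous under pointwise convergence. The fix, as above, is to probe each $A_i$ at a radius $\lambda$ \emph{strictly greater} than $\rho_A(\theta)$ (or arbitrarily large, when $\rho_A(\theta)=\infty$), where in the limit the clamp $\min\{\lambda,\cdot\}$ becomes inactive and therefore returns $\rho_{A_i}(\theta)$ itself for large $i$; working throughout in $[0,\infty]$ handles the finite/infinite dichotomy for $\rho_A(\theta)$ uniformly, and the continuity of $t\mapsto\min\{c,t\}$ takes care of the ``if'' direction and the reduction at $x=0$.
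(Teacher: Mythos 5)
Your argument is correct. The pivotal identity $\|x\|-d_r(x,A)=\min\{\|x\|,\rho_A(\theta_x)\}$ for $x\neq 0$ is valid for every $A\in\mathcal{S}_{rc}^d$: the case $x\in A$ gives $\|x\|\leq\rho_A(\theta_x)$ by definition of the radial function, and radial closedness together with the star property forces $\rho_A(\theta_x)<\|x\|$ whenever $x\notin A$, so both cases of (P1) are captured. The underlying mechanism is the same as in the paper's proof --- both arguments reduce $\tau_{W^r}$-convergence to pointwise convergence of $d_r(x,\cdot)$ via (\ref{eq:pointwise-convergence-rdistance}), translate via (P1), and, in the converse direction, probe at radii strictly larger than $\rho_A(\theta)$ (the paper uses $\rho_A(\theta)+\tfrac{\varepsilon}{2}$, you use an arbitrary $\lambda>r$) with a separate $\liminf$ argument when $\rho_A(\theta)=\infty$. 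What your formulation buys is economy: the single clamping identity, combined with continuity of $t\mapsto\min\{c,t\}$ on $[0,\infty]$, absorbs the paper's case split in the forward direction ($x=0$; $x\in A$ with $\rho_A(\theta_x)>\|x\|$ or $=\|x\|$; $x\notin A$) into one line, and it makes transparent why the finite and infinite values of $\rho_A(\theta)$ can be treated uniformly. Your closing remark correctly identifies the one genuinely delicate point (non-continuity of the supremum defining $\rho_A$) and the correct fix. The restriction to $\mathcal{S}_1^d$ via the subspace topology is also handled as in the paper.
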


\begin{proof}
    First let us suppose that $(\rho_{A_i}(\cdot))_i$ converges pointwise to $\rho_{A}(\cdot)$. We shall prove that $d_r(.,A_i)_{i\in I}$ converges pointwise to $d_r(.,A)$, see (\ref{eq:pointwise-convergence-rdistance}).  Notice that for $x=0$, 
    $d_r(0,A)=d_r(0,A_i)=0$ for all $i\in I$. Thus, the convergence follows directly. 
    
    For $x\in A\setminus\{0\}$.  Observe that $d_r(x,A)=0$ and that
    $\rho_{A}(\theta_x)\geq\|x\|>0$.  Hence,
    $$
    \lim_{i}\rho_{A_i}(\theta_x)=\rho_{A}(\theta_x)\geq\|x\|.
    $$
    If $\rho_{A}(\theta_x)>\|x\|$, there is $i_0\in I$ such that $\rho_{A_i}(\theta_x)>\|x\|$ for all $i>i_0$. Thus,  $x\in A_i$  and $d_r(x,A)=d_r(x,A_i)=0$ for all $i>i_0$. This proves that $(d_r(x,A_i))_i$ converges to $d_r(x,A)$. 
    
    

    On the other side, if $\rho_{A}(\theta_x)=\|x\|$, then for every $0<\varepsilon<\|x\|$, there is $i_1\in I$ such that $|\rho_{A_i}(\theta_x)-\|x\||<\varepsilon$ for all $i>i_1$. Consequently, for $i>i_1$ such that $\rho_{A_i}(\theta_x)\geq\|x\|$, $d_r(x,A_i)=0$. Otherwise, for $i>i_1$  such that
    $\rho_{A_i}(\theta_x)<\|x\|$, we have that $d_r(x,A_i)=\|x\|-\rho_{A_i}(\theta_x)<\varepsilon$ (see (P1)). Therefore $d_r(x,A_i)<\varepsilon$ for all $i>i_1$, and the net  $(d_r(x,A_i))_i$ goes to $0$.


    For the case $x\notin A$. Notice that, by (P1), 
    $d_r(x,A)=\|x\|-\rho_{A}(\theta_x)>0$. Hence,  for each $0<\delta<d_r(x,A)$, there is $i_4\in I$ such that $|\rho_{A_i}(\theta_x)-\rho_{A}(\theta_x)|<\delta$ for every $i>i_4$. Thus $\rho_{A_i}(\theta_x)<\|x\|$ and, by  (P1), $d_r(x,A_i)=\|x\|-\rho_{A_i}(\theta_x)$. From this, it follows that for every $i>i_4$,
    $$
    |d_r(x,A)-d_r(x,A_i)|=|\rho_{A_i}(\theta_x)-\rho_{A}(\theta_x)|<\delta.
    $$  
    Therefore $(d_r(x,A_i))_i$ converges to $d_r(x,A)$, and the first implication is proved.

    To prove the reverse implication, suppose that $(A_i)_{i\in I}$ $\tau_{W^r}$-converges  to $A$ and let $\theta\in\mathbb{S}^{d-1}$. For $\rho_{A}(\theta)$ finite, let $\varepsilon>0$ be arbitrary. In this case, $(\rho_{A}(\theta)+\frac{\varepsilon}{2})\theta\notin A$ and,  by (P1),
    $$
    d_r\left(\left(\rho_{A}(\theta)+\frac{\varepsilon}{2}\right)\theta,A\right)=\frac{\varepsilon}{2}.
    $$
    Hence, by our hypothesis and (\ref{eq:pointwise-convergence-rdistance}), there exists $i'\in I$ such that 
    $$
    \left|d_r\left(\left(\rho_{A}(\theta)+\frac{\varepsilon}{2}\right)\theta,A_i\right)-\frac{\varepsilon}{2}\right|<\frac{\varepsilon}{2}
    $$
    for all $i>i'$.  Thus, $\left(\rho_{A}(\theta)+\frac{\varepsilon}{2}\right)\theta\notin A_i$. Moreover, from (P1) and the last inequality, it follows that $|\rho_{A}(\theta)-\rho_{A_i}(\theta)|<\varepsilon$
    for all $i>i'$. Therefore $(\rho_{A_i}(\theta))_i$ converges to $\rho_{A}(\theta)$.

    For $\rho_{A}(\theta)=\infty$, we shall prove that 
    $\liminf\rho_{A_i}(\theta)=\infty$. From this, it follows directly that $(\rho_{A_i}(\theta))_i$ goes to $\infty$. In fact, since $M\theta\in A$ for all $M>0$, then $d_r(M\theta,A)=0$. Hence, by  (\ref{eq:pointwise-convergence-rdistance}), 
    \begin{equation}\label{eq:aux-caract-rWijsman}
      \lim_{i\in I}d_r(M\theta,A_i)=0\text{ for all } M>0.
    \end{equation}
    We will proceed by contradiction and suppose that $\liminf\rho_{A_i}(\theta)<\infty$. In this case, there is $R>0$ such that 
    $\inf_{j\geq i} \rho_{A_j}(\theta)<R$ for all $i\in I$. Therefore, for every $i\in I$, there is $j\geq i$ such that $\rho_{A_j}(\theta)<R$. Thus 
    $(R+1)\theta\notin A_j$ and, by (P1),
    $$
    d_r\left((R+1)\theta,A_{j}\right)=R+1-\rho_{A_{j}}\left(\theta\right)>1
    $$
    which leads to a contradiction with (\ref{eq:aux-caract-rWijsman}) when $M=R+1$. This proves that  $(\rho_{A_i}(\cdot))_i$ is pointwise convergent to $\rho_{A}(\cdot)$.

    To finish the proof, notice that the results just proved remain valid with $\mathcal{S}_{1}^d$ instead of $\mathcal{S}_{rc}^d$. Therefore, the topologies also coincide on $\mathcal{S}_{1}^d$.
\end{proof}


The following corollary gathers basic properties of the radial Wijsman topology and its connection with the radial metric $\delta$. Notably, $\tau_{W^r}$ is not a metrizable topology, even when restricted to the family $\mathcal{S}_{1,b}^d$. 

Let us denote by $C_p\left(\mathbb{S}^{d-1}\right)$ the space of continuous functions $f:\mathbb{S}^{d-1}\rightarrow\mathbb{R}$, equipped with the topology of pointwise convergence. Similarly, let $C_p^+(\mathbb{S}^{d-1})
\subseteq C_p\left(\mathbb{S}^{d-1}\right)$ denote the cone  of non-negative functions.


\begin{corollary}\label{cor:rWijsman-nonmetrizable} The following hold:
    \begin{enumerate}

         \item The space $(\mathcal{S}_{1,b}^d,\tau_{W^r})$ is homeomorphic to $C_p^+(\mathbb{S}^{d-1})$.  Particularly, 
         $\tau_{W^r}$ is not a metrizable topology on $\mathcal{S}_{1,b}^d$, $\mathcal{S}_{1}^d$ and $\mathcal{S}_{rc}^d$.

        \item On $\mathcal{S}_{1,b}^d$ and $\mathcal{S}_{rc,b}^d$, $\tau_{W^r}\subseteq\tau_{\delta}$ and the inclusion is strict. 

    \end{enumerate}
    
\end{corollary}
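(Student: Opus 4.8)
\emph{Part (1).} The plan is to exhibit an explicit homeomorphism between $(\mathcal{S}_{1,b}^d,\tau_{W^r})$ and $C_p^+(\mathbb{S}^{d-1})$ via the radial function map $A\mapsto\rho_A$. This map is well-defined into $C_p^+(\mathbb{S}^{d-1})$ because $A\in\mathcal{S}_{1,b}^d$ means $\rho_A$ is continuous and bounded, hence a genuine element of $C_p(\mathbb{S}^{d-1})$, and it is non-negative; it is surjective because any continuous non-negative $f$ on the sphere is the radial function of the bounded star body $\{\lambda\theta:\theta\in\mathbb{S}^{d-1},\,0\le\lambda\le f(\theta)\}$, which lies in $\mathcal{S}_{1,b}^d$; and it is injective since a star body is determined by its radial function (stated just after Definition \ref{defn:radialdist}, and also (P3) of Remark \ref{rem:properties-d-r}). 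Continuity of the map and of its inverse in both directions is precisely the content of Theorem \ref{thm:caract-rWijsmanTopology}: a net $A_i\to A$ in $\tau_{W^r}$ iff $\rho_{A_i}\to\rho_A$ pointwise, which is exactly convergence in $C_p^+(\mathbb{S}^{d-1})$. For the non-metrizability conclusion, I would invoke the standard fact that $C_p(X)$ for an infinite Tychonoff space $X$ is not metrizable (it is not first countable: e.g.\ $C_p(X)$ is metrizable iff $X$ is countable, and $\mathbb{S}^{d-1}$ is uncountable); the cone $C_p^+(\mathbb{S}^{d-1})$ contains a translate of a neighborhood basis issue at, say, a strictly positive function, so it inherits non-first-countability and hence non-metrizability. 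Finally, since $\mathcal{S}_{1,b}^d$ embeds as a (topological) subspace of both $(\mathcal{S}_1^d,\tau_{W^r})$ and $(\mathcal{S}_{rc}^d,\tau_{W^r})$ — as noted right after Definition \ref{defn:radial-WijsmanTop} — and metrizability is hereditary, $\tau_{W^r}$ cannot be metrizable on $\mathcal{S}_1^d$ or $\mathcal{S}_{rc}^d$ either.

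\emph{Part (2).} To show $\tau_{W^r}\subseteq\tau_\delta$ on $\mathcal{S}_{rc,b}^d$ (and hence on $\mathcal{S}_{1,b}^d$), it suffices to check that each defining functional $d_r(x,\cdot)$ is $\tau_\delta$-continuous; equivalently, by Proposition \ref{prop:caract-dHr-distanciaradial}, $|d_r(x,A_1)-d_r(x,A_2)|\le\sup_{y}|d_r(y,A_1)-d_r(y,A_2)|=\delta(A_1,A_2)$, so $d_r(x,\cdot)$ is $1$-Lipschitz with respect to $\delta$, which gives continuity and therefore $\tau_{W^r}\subseteq\tau_\delta$. For strictness of the inclusion I would use Example \ref{exam:ejemploMoszynska}: the sequence $(A_n)_n\subseteq\mathcal{S}_{rc,b}^d$ (in fact in $\mathcal{S}_{1,b}^d$, since each $\rho_{A_n}$ is continuous) converges to $B_2^d$ in the Hausdorff distance but not in the radial metric $\delta$. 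The point is that Hausdorff convergence of these sets forces $\rho_{A_n}(\theta)\to\rho_{B_2^d}(\theta)=1$ pointwise — off the single direction $e_d$ this is immediate since the sets agree with $B_2^d$ outside a shrinking cone, and at $\theta=e_d$ one has $\rho_{A_n}(e_d)=\tfrac14\to\tfrac14$... so actually I should argue pointwise convergence of $\rho_{A_n}$ directly: for $\theta\ne e_d$, eventually $\theta$ lies outside $\mathrm{conv}(C_n)$ so $\rho_{A_n}(\theta)=1$, while the failure of $\delta$-convergence comes exactly from the persistent gap near $e_d$. Hence $(A_n)_n$ is $\tau_{W^r}$-convergent to $B_2^d$ by Theorem \ref{thm:caract-rWijsmanTopology} but not $\tau_\delta$-convergent, so the two topologies differ.

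\emph{Main obstacle.} The delicate point is Part (1): pinning down cleanly \emph{why} $C_p^+(\mathbb{S}^{d-1})$ fails to be metrizable and making sure the argument transfers to the cone of non-negative functions rather than the full function space. The cleanest route is to note that a metrizable topological group (or homogeneous space) must be first countable, and that $C_p(\mathbb{S}^{d-1})$ is not first countable at $0$ because a countable family of basic neighborhoods — each depending on finitely many points of $\mathbb{S}^{d-1}$ — involves only countably many points, and choosing a point outside that countable set produces a neighborhood containing no member of the family; the same pigeonhole argument applies verbatim inside $C_p^+(\mathbb{S}^{d-1})$ at any interior point such as the constant function $\mathbf{1}$ (which corresponds to $B_2^d$), since small $\tau_{W^r}$-neighborhoods of $B_2^d$ again constrain only finitely many directions. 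I would also double-check the subtle claim, used implicitly, that the subspace topology on $\mathcal{S}_{1,b}^d$ inside $(\mathcal{S}_{rc}^d,\tau_{W^r})$ really is the weak topology generated by the restricted functionals — but this is exactly what was remarked after Definition \ref{defn:radial-WijsmanTop}, so it may be cited rather than reproved.
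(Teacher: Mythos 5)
Part (1) of your plan and the inclusion $\tau_{W^r}\subseteq\tau_{\delta}$ in Part (2) are correct and essentially match the paper: the paper also obtains the homeomorphism $A\mapsto\rho_A$ onto $C_p^+(\mathbb{S}^{d-1})$ directly from Theorem \ref{thm:caract-rWijsmanTopology} and then cites non-metrizability of $C_p^+(\mathbb{S}^{d-1})$ (you supply the first-countability argument that the paper delegates to a reference, which is fine), and the Lipschitz estimate $|d_r(x,A_1)-d_r(x,A_2)|\le\delta(A_1,A_2)$ is a legitimate route to $\tau_{W^r}\subseteq\tau_\delta$.

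There is, however, a genuine gap in your proof of strictness in Part (2). The sequence $(A_n)_n$ of Example \ref{exam:ejemploMoszynska} does \emph{not} converge to $B_2^d$ in $\tau_{W^r}$: as your own computation shows, $\rho_{A_n}(e_d)=\tfrac14$ for every $n$, while $\rho_{B_2^d}(e_d)=1$, so $\rho_{A_n}$ does not converge pointwise to $\rho_{B_2^d}$, and by Theorem \ref{thm:caract-rWijsmanTopology} there is no $\tau_{W^r}$-convergence to $B_2^d$. (Indeed, the paper uses this very example in Section \ref{sec:rel-Wr-W} to show the \emph{opposite} phenomenon, namely that Hausdorff/Wijsman convergence does not imply $\tau_{W^r}$-convergence.) The $\tau_{W^r}$-limit of $(A_n)_n$ is the ``dented ball'' $A$ with $\rho_A(e_d)=\tfrac14$ and $\rho_A(\theta)=1$ otherwise; this would in principle still witness strictness on $\mathcal{S}_{rc,b}^d$ (since $(A_n)_n$ does not $\delta$-converge to $A$), but $\rho_A$ is discontinuous, so $A\notin\mathcal{S}_{1,b}^d$ and the example cannot separate the two topologies on $\mathcal{S}_{1,b}^d$, which the statement requires. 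You need an example that converges in $\tau_{W^r}$ but not in $\tau_\delta$ \emph{within} $\mathcal{S}_{1,b}^d$; the paper takes the star bodies $E_n$ with $\rho_{E_n}(\theta)=n|\langle\theta,e_1\rangle|e^{-n|\langle\theta,e_1\rangle|}$, whose radial functions converge pointwise but not uniformly to $0$, so that $E_n\to\{0\}$ in $\tau_{W^r}$ but not in $\tau_\delta$, with $\{0\}$ and all $E_n$ in $\mathcal{S}_{1,b}^d$.
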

\begin{proof}
    
    (1) From Theorem \ref{thm:caract-rWijsmanTopology}, it follows that the map
    \begin{align}\label{eq:homeomorfismo-S1b-Continuas-Esfera}
     (\mathcal{S}_{1,b}^d,\tau_{W^r})&\longrightarrow C_p^+\left(\mathbb{S}^{d-1}\right)\\
     A&\longrightarrow \rho_A(\cdot)\nonumber
    \end{align}
    is a homeomorphism. However, $C_p^+\left(\mathbb{S}^{d-1}\right)$ is not metrizable (see e.g. \cite[Theorem 9.5]{Dugundji66} ), which implies that $(\mathcal{S}_{1,b}^d,\tau_{W^r})$ is also not metrizable. 
    Consequently, since $(\mathcal{S}_{1,b}^d,\tau_{W^r})$ is a subspace of both $(\mathcal{S}_{1}^d,\tau_{W^r})$ and $(\mathcal{S}_{rc}^d,\tau_{W^r})$, these spaces are not metrizable either.

    (2) From Theorem \ref{thm:caract-rWijsmanTopology}, it follows that $\tau_{W^r}\subseteq\tau_{\delta}$. 
    To prove that the topologies are different, consider the following example: 
    For each $n\in\mathbb{N}$, let $E_n$ be the star body with 
    \begin{equation}\label{examp:rWijsmanNotWijsman-converg}
        \rho_{E_n}(\theta)=\frac{n|\langle\theta,e_1\rangle|}{e^{n|\langle\theta,e_1\rangle|}}.
    \end{equation}
    Clearly $ \rho_{E_n}(\theta)\leq1$ for all $\theta\in\mathbb{S}^{d-1}$. Hence, by Proposition \ref{prop:car-starb-conti-distance}-(1), each  $E_n\in\mathcal{S}_{1,b}^d$. Since $(\rho_{E_n}(\cdot))$ is pointwise convergent to $\rho_{\{0\}}(\cdot)\equiv0$, then by Theorem \ref{thm:caract-rWijsmanTopology},
    $(E_n)_n$ $\tau_{W^r}$-converges to $\{0\}\in\mathcal{S}_{1,b}^d$. However  $(\rho_{E_n}(\cdot))$ is not uniformly convergent to $\rho_{\{0\}}(\cdot)$, and therefore $(E_n)_n$ does not converges to $\{0\}$ with respect to the radial metric.
\end{proof}
\begin{remark}
    $\mathcal{S}_{1}^d$ is not closed in $(\mathcal{S}_{rc}^d,\tau_{W^r})$.
\end{remark}
\begin{proof}
Below, we exhibit a $\tau_{W^r}$-convergent sequence of star bodies in $\mathcal{S}_{1}^d$ whose limit does not belong to this class. Indeed, let $X_n$, $n\in\mathbb{N}$, be the star body with radial function
$$
\rho_{X_n}(\theta)=|\langle\theta,e_1\rangle|^n.
$$ 
By Proposition \ref{prop:car-starb-conti-distance},  $X_n\in\mathcal{S}_{1}^d$. However, $(\rho_{X_n})_n$ converges pointwise to $\rho_{[-e_1,e_1]}$.
Thus, by Theorem \ref{thm:caract-rWijsmanTopology}, $(X_n)_n$ is $\tau_{W^r}$-convergent to $[-e_1,e_1]\notin\mathcal{S}_{1}^d$. 
\end{proof}


\subsection{On the relation with the Wijsman topology}
\label{sec:rel-Wr-W}
Recall that \textit{the Wijsman topology $\tau_W$} on the family of non-empty closed sets of $\mathbb{R}^d$, $CL(d)$,  is the weak topology generated by the family of distance functionals  $\left\{d(x,\cdot):x\in\mathbb{R}^d\right\}$, where $d(x,\cdot)$ is given by
\begin{align*}
    d(x,\cdot):CL(d)&\rightarrow[0,\infty)\\
    C&\rightarrow d(x,C).
\end{align*}
We refer to \cite{Beer1993} for the basic properties of $\tau_W$. 
An important  consequence of Corollary \ref{cor:rWijsman-nonmetrizable} regards with the relation between the Wijsman and the radial Wijsman topologies. In this sense, recall that $\tau_{W}$ is metrizable on $CL(d)$ (\cite[Theorem 2.1.5]{Beer1993}), while, from (1) in Corollary \ref{cor:rWijsman-nonmetrizable}, it follows that  $\tau_{W^r}$ is not metrizable on the family of closed star bodies $\mathcal{S}_{1}^d$. As a consequence, the topologies $\tau_{W}$ and $\tau_{W^r}$ do not agree on $\mathcal{S}_{1}^d$. Below, we shall see that on $\mathcal{S}_{1}^d$ none of these topologies includes the other one.


In Example \ref{exam:ejemploMoszynska}, we exhibited a sequence $(A_n)_n$ of star bodies in $\mathcal{S}_{1,b}^d$ that converges to $B_2^d$ in the Hausdorff metric (and thus in $\tau_{W}$), but such that its associated sequence of radial functions does not converge pointwise to $\rho_{B_2^d}$. Consequently, from Theorem \ref{thm:caract-rWijsmanTopology}, we know that $(A_n)_n$ does not converge  to $B_2^d$ in $\tau_{W^r}$. Therefore, $\tau_{W^r}\nsubseteq\tau_{W}$ The next example shows that $\tau_{W}\nsubseteq\tau_{W^r}$ on $\mathcal{S}_{1,b}^d$.

\begin{example}
    Let $(E_n)_n\subset\mathcal{S}_{1,b}^d$ be the sequence in (\ref{examp:rWijsmanNotWijsman-converg}). We already know that $(E_n)_n$ is $\tau_{W^r}$-convergent to $\{0\}$.  We shall see that it does not converge to $\{0\}$ in the Wijsman topology. Consequently on $\mathcal{S}_{1,b}^d$, $\tau_{W}\nsubseteq\tau_{W^r}$. Suppose that $(E_n)_n$ $\tau_W$-converges to $\{0\}$, and let $(\theta_n)_n\subset\mathbb{S}^{d-1}$ be the sequence given by
    \begin{equation}\label{eq:examp-limclosednotclosed}
        \theta_n=\left(\frac{1}{n},\sqrt{1-\frac{1}{n^2}}\right).
    \end{equation}
    Clearly, $(\theta_n)_n$ goes to $e_2$ and $\rho_{E_n}(\theta_n)=e^{-1}$ for all $n$. Hence, $e^{-1}\theta_n\in E_n$ and 
    $d(e^{-1}e_2,E_n)\leq\|e^{-1}e_2-e^{-1}\theta_n\|$. Thus, 
    $$
    d(e^{-1}e_2,\{0\})=\lim_{n\rightarrow\infty}d(e^{-1}e_2,E_n)=0
    $$
    which is a contradiction. Therefore, $(E_n)_n$ does not converge to $\{0\}$ in $\tau_W$.
\end{example}

\cite[Theorem 3.1]{Wijsman1966} provides a characterization of convergence in the Wijsman topology. It shows that a sequence $(Y_n)_n\subseteq CL(d)$ converges, in $\tau_{W}$,  to a closed set $Y\subseteq\mathbb{R}^d$ if and only if $\lim_{n}d(y,Y_n)=0$ for all $y\in Y$, and $\liminf_{n}d(x,X_n)>0$ for all $x\notin Y$. This theorem, in combination  with (P4), proves the following:

\begin{remark}
    Let $(A_n)_n\subset\mathcal{S}_{rc}^d$ be a sequence of closed sets with  $\tau_{W^r}$-$\lim A_n=A$ for some $A\in\mathcal{S}_{rc}^d$ closed. Then $A=\tau_{W}$-$\lim A_n$  if and only if $\liminf d(x,A_n)>0$ for all $x\notin A$.
\end{remark}


We finish this section with some results regarding the relation between the radial Wijsman topology and the Wijsman topology on $\mathcal{K}_{0}^d$. In this sense, it is convenient to notice that
$\tau_{W^r}$-limits of closed star bodies are not always closed, not even when only compact convex sets are considered. To check this, let $(\theta_n)_n\subseteq\mathbb{R}^d$ be the sequence of (\ref{eq:examp-limclosednotclosed}), and denote by $H_n$ the closed half-space $\left\{z\in\mathbb{R}^d:\langle z,\theta_n\rangle\leq0\right\}$. It follows from Theorem \ref{thm:caract-rWijsmanTopology} that the sequence
$(H_n)_n$ is $\tau_{W^r}$-convergent to
$\left\{z\in\mathbb{R}^d:\langle z,e_2\rangle<0\right\}\cup\left\{\lambda e_1:\lambda\leq0\right\}$, which is a non-closed convex set. Despite this fact, convexity is indeed preserved by $\tau_{W^r}$-limits of closed convex sets in $\mathcal{K}_{0}^d$.

\begin{proposition}\label{prop:Wrlimits-convex-isconvex}
    Let $(K_i)_{i\in I}\subset\mathcal{K}_{0}^d$ be a net and $A\in\mathcal{S}_{rc}^d$. If $A=\tau_{W^r}$-$\lim_{i}K_i$, then $A$ is convex. Particularly, if $A$ is  closed, $A\in\mathcal{K}_{0}^d$.
\end{proposition}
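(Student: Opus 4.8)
The plan is to use the characterization of $\tau_{W^r}$-convergence from Theorem~\ref{thm:caract-rWijsmanTopology}, namely that $\rho_{K_i}$ converges pointwise to $\rho_A$, and to translate convexity of $A$ into a pointwise condition on $\rho_A$ that is stable under pointwise limits. First I would recall the standard fact that a star body $K$ containing the origin is convex if and only if its radial function satisfies a concavity-type inequality along the ``short way'': for any two directions $\theta,\eta\in\mathbb{S}^{d-1}$ and the direction $\theta_{s\theta+t\eta}$ of any positive combination $s\theta+t\eta$ with $s,t\geq0$ (not both zero), one has
\begin{equation*}
\rho_K\!\left(\theta_{s\theta+t\eta}\right)\ \geq\ \frac{\|s\theta+t\eta\|}{\dfrac{s}{\rho_K(\theta)}+\dfrac{t}{\rho_K(\eta)}}\,,
\end{equation*}
with the usual conventions $1/\infty=0$ and $1/0=\infty$ (so the right-hand side is $+\infty$ when $\rho_K(\theta)=\rho_K(\eta)=\infty$). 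Equivalently, the radial reciprocal $1/\rho_K$, extended to a positively homogeneous function on $\mathbb{R}^d$ (the gauge/Minkowski functional of $K$), is sublinear; this is precisely the content of the correspondence between convex bodies containing $0$ and their gauges, and the same statement holds verbatim for possibly unbounded or non-closed convex star bodies since we only need star-shapedness with respect to $0$ and convexity, not closedness.

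The key steps, in order: (1) Prove the claimed equivalence ``$K\in\mathcal{K}_0^d \iff$ the displayed inequality holds for all $\theta,\eta,s,t$''. The forward direction: given $a=\rho_K(\theta)\theta\in K$ and $b=\rho_K(\eta)\eta\in K$ (when the radii are finite; the infinite cases are handled by taking $\lambda a$, $\mu b$ with $\lambda,\mu\to\infty$ and passing to the limit, or directly noting the relevant ray lies in $K$), a convex combination $\lambda a+(1-\lambda)b\in K$ points in direction $\theta_{\lambda\rho_K(\theta)\theta+(1-\lambda)\rho_K(\eta)\eta}$ and has norm giving a lower bound on $\rho_K$ in that direction; optimizing over $\lambda$ and reparametrizing via $s,t$ yields the inequality. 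The reverse direction: the inequality says exactly that $[a,b]\subseteq K$ for any $a,b\in K$ lying on rays through the origin, and since every point of $K$ lies on such a ray, $K$ is convex. (2) Apply Theorem~\ref{thm:caract-rWijsmanTopology}: $\rho_{K_i}(\theta)\to\rho_A(\theta)$ for every $\theta\in\mathbb{S}^{d-1}$. (3) Fix $\theta,\eta,s,t$; write the displayed inequality for each $K_i$ and pass to the limit. Here the only care needed is the behaviour of $x\mapsto 1/x$ on $[0,\infty]$: it is continuous as a map $[0,\infty]\to[0,\infty]$ in the order topology, so $s/\rho_{K_i}(\theta)+t/\rho_{K_i}(\eta)\to s/\rho_A(\theta)+t/\rho_A(\eta)$ in $[0,\infty]$, and then $\|s\theta+t\eta\|/(\,\cdot\,)$ passes to the limit as well; since $\rho_{K_i}(\theta_{s\theta+t\eta})\geq$ (the $K_i$-quantity) for all $i$, the limit inequality $\rho_A(\theta_{s\theta+t\eta})\geq$ (the $A$-quantity) follows from $\liminf$. (4) Conclude $A\in\mathcal{K}_0^d$ in the convex (not necessarily closed) sense, i.e.\ $A$ is convex; and if in addition $A$ is closed, then being a closed convex star body containing $0$ it lies in $\mathcal{K}_0^d$ as defined.

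The main obstacle I expect is bookkeeping with the extended-real arithmetic and the degenerate cases in step (1)/(3): directions where $\rho$ is $0$ or $\infty$, the case $s\theta+t\eta=0$ (which only happens if $\eta=-\theta$ and $s=t$, and should simply be excluded or handled trivially since then there is no constraint), and making sure the ``short-way'' interpolation is stated for the correct cone of directions (positive combinations of $\theta$ and $\eta$, not the whole great circle). A cleaner route that sidesteps some of this is to argue directly with points rather than radial functions: it suffices to show $A$ is convex, and by star-shapedness it is enough to show that for all $a,b\in A$ the segment $[a,b]\subseteq A$; a point $c=(1-\lambda)a+\lambda b$ either is $0$ (done) or has a direction $\theta_c$, and one shows $\|c\|\leq\rho_A(\theta_c)$ by approximating: pick $a_i\in K_i$, $b_i\in K_i$ on the rays $\langle a\rangle$, $\langle b\rangle$ with $\|a_i\|\to\|a\|\wedge\rho_A(\theta_a)$ etc.\ — but because convergence is only pointwise in the radial function, one must be slightly careful that $\rho_{K_i}(\theta_a)\to\rho_A(\theta_a)\geq\|a\|$ gives, for large $i$, points $a_i:=\min\{\|a\|,\rho_{K_i}(\theta_a)\}\,\theta_a\in K_i$ with $a_i\to a$, and similarly $b_i\to b$; then $(1-\lambda)a_i+\lambda b_i\in K_i$ since $K_i$ is convex, this point converges to $c$, and its direction $\theta$ has $\rho_{K_i}(\theta)\geq\|(1-\lambda)a_i+\lambda b_i\|$, which in the limit (using pointwise convergence at the \emph{fixed} direction $\theta_c$, after noting the directions also converge and handling that via a short semicontinuity argument) forces $\rho_A(\theta_c)\geq\|c\|$, i.e.\ $c\in A$. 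I would present whichever of these two arguments is shorter; the radial-function inequality version is likely cleanest provided the convexity criterion for radial functions is quoted from a standard reference such as \cite{MoszynskaBook} or \cite{Schneider2014}.
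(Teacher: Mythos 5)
Your primary argument (the gauge/radial-function route) is correct and genuinely different from the paper's. You characterize convexity of a radially closed star set containing $0$ by sublinearity of its gauge $g_K(x)=\|x\|/\rho_K(\theta_x)$, observe via Theorem \ref{thm:caract-rWijsmanTopology} that $\tau_{W^r}$-convergence is pointwise convergence of radial functions, and pass the subadditivity inequality to the limit using continuity of $x\mapsto 1/x$ on $[0,\infty]$; since each side of the inequality is evaluated at a \emph{fixed} direction, pointwise convergence suffices, and radial closedness of $A$ turns $g_A\le 1$ back into membership. This is sound (the degenerate cases $\rho=0,\infty$ and $s\theta+t\eta=0$ are all harmless, as you note) and arguably more systematic than the paper's proof, at the cost of first establishing the convexity criterion for radial functions. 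The paper instead argues directly with points, and its one nontrivial idea is exactly the device that your second, point-based sketch is missing: rather than taking $a_i=\min\{\|a\|,\rho_{K_i}(\theta_a)\}\theta_a$ and then confronting the fact that the direction of $(1-\lambda)a_i+\lambda b_i$ varies with $i$ (your ``short semicontinuity argument'' is really the crux there, and pointwise convergence of radial functions gives no control at nearby directions), the paper shrinks to $(1-\varepsilon)a$ and $(1-\varepsilon)b$, which lie \emph{strictly} inside the radial bound of $A$ and hence belong to $K_i$ exactly for all large $i$; the convex combination $(1-\varepsilon)(ta+(1-t)b)$ is then a fixed point of $\mathbb{R}^d$ lying in $K_i$ eventually, so $d_r(\cdot,K_i)\to d_r(\cdot,A)$ at that point forces it into $A$ by (P2), and radial closedness removes the $\varepsilon$. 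So: present your first route as is, or, if you prefer the point-based route, replace the approximating points $a_i,b_i$ by the $(1-\varepsilon)$-shrinking trick to close the gap.
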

\begin{proof}
    Let $a,b\in A$ and $t\in[0,1]$. Clearly, for $a=0$ or $b=0$, $[a,b]\subseteq A$. Suppose that $a,b\neq0$. Since $A\in\mathcal{S}_{rc}^d$, then $[a,b]\subseteq A$ whenever $a$ and
    $b$ are linearly dependent.  For linearly independent $a$ and $b$, we shall prove that
    $(1-\varepsilon)(ta+(1-t)b)\in A$ for every  $0<\varepsilon<1$. Consequently,
    $$
    \rho_A\left(\frac{ta+(1-t)b}{\|ta+(1-t)b\|}\right)\geq(1-\varepsilon)
    \|ta+(1-t)b\|.
    $$
    Since $\varepsilon$ is arbitrary and $A$ is radially closed, we must have that $ta+(1-t)b\in A$. 
    To show that $(1-\varepsilon)(ta+(1-t)b)\in A$, observe that $\|x-\varepsilon x\|<\rho_A(\theta_x)$ for $x=a,b$. Now, since  $A=\tau_{W^r}$-$\lim_{i}K_i$, then, by Theorem \ref{thm:caract-rWijsmanTopology},
    there exist $i_a,i_b\in I$ such that $\|x-\varepsilon x\|<\rho_{K_i}(\theta_x)$ for $i>i_x$ with $x=a,b$. Hence, there is $i'>i_a,i_b$ such that $a-\varepsilon a, b-\varepsilon b\in K_i$ for all $i>i'$. It then follows that $(1-\varepsilon)(ta+(1-t)b)\in K_i$, and
    $$
    d_r((1-\varepsilon)(ta+(1-t)b),A)=\lim_{i}d_r((1-\varepsilon)(ta+(1-t)b),K_i)=0.
    $$
    Therefore, by (P2), $(1-\varepsilon)(ta+(1-t)b)\in A$ as desired.
\end{proof}

In the next proposition, we establish conditions under which $\tau_{W^r}$-convergent sequences of convex sets also convergence in $\tau_{W}$. To achieve this, we provide criteria for convergence in the Attouch-Wets metric $d_{AW}$ (\ref{eq:dAW}). Since $\tau_{W}$ and $d_{AW}$ are equivalent on $\mathcal{K}_{0}^d$ (see e.g. \cite[Theorem 3.1.4]{Beer1993}) our desired result follows directly.  

\begin{proposition}\label{prop:conditionsWr-implies-dAW}
    Let $(K_n)_n\subseteq\mathcal{K}_{0}^d$ and $K\in\mathcal{K}_{0}^d$ be such that $K=\tau_{W^r}$-$\lim K_n$. Then, the following hold:
    \begin{enumerate}
        \item There is a subsequence $(K_{n_m})_m$ that converges in $d_{AW}$ to some $K'\in\mathcal{K}_{0}^d$, with $K\subseteq K'$. Moreover, this inclusion holds for the limit of any $d_{AW}$-convergent subsequence of $(K_n)_n$.
        \item If $\eta B_2^d\subseteq K_n, K$ for some $\eta>0$  and all $n\in\mathbb{N}$, then  $K=d_{AW}$-$\lim K_n$.
    \end{enumerate}
\end{proposition}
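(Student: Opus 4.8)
The plan is to transfer everything to the hyperspace $(CL(d),\tau_F)$ and to use two standard facts recalled earlier: in $\mathbb{R}^d$ the Attouch--Wets topology coincides with the Fell topology and with Kuratowski--Painlev\'e convergence, so that $d_{AW}$-$\lim_n C_n=C$ is the same as $C=\operatorname{Li}_n C_n=\operatorname{Ls}_n C_n$; and $(CL(d),\tau_F)$ is compact. I would also use that $\mathcal{K}_0^d$ is stable under Kuratowski limits: a Kuratowski limit of convex sets is convex, and if every $K_n$ contains $0$ then so does $\operatorname{Li}_n K_n$. Hence every sequence in $\mathcal{K}_0^d$ has a $d_{AW}$-convergent subsequence whose limit again lies in $\mathcal{K}_0^d$. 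Throughout I would invoke Theorem~\ref{thm:caract-rWijsmanTopology} in the form: $K=\tau_{W^r}$-$\lim_n K_n$ if and only if $\rho_{K_n}(\theta)\to\rho_K(\theta)$ for every $\theta\in\mathbb{S}^{d-1}$.

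For Part~(1), I would first extract, by compactness, a subsequence with $d_{AW}$-$\lim_m K_{n_m}=K'\in\mathcal{K}_0^d$. To see $K\subseteq K'$, fix $x\in K\setminus\{0\}$ and $\varepsilon\in(0,1)$; since $\rho_K(\theta_x)\geq\|x\|>(1-\varepsilon)\|x\|$ and $\rho_{K_n}(\theta_x)\to\rho_K(\theta_x)$, eventually $\rho_{K_n}(\theta_x)\geq(1-\varepsilon)\|x\|$, i.e. $(1-\varepsilon)x\in K_n$, so $(1-\varepsilon)x\in\operatorname{Li}_m K_{n_m}=K'$; as $K'$ is closed and $\varepsilon$ is arbitrary, $x\in K'$ (and $0\in K'$ trivially). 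The identical computation applies to any $d_{AW}$-convergent subsequence of $(K_n)_n$ (whose limit automatically lies in $\mathcal{K}_0^d$), which yields the ``moreover'' clause.

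For Part~(2), I would argue that every subsequence of $(K_n)_n$ has a further subsequence $d_{AW}$-converging to $K$; since $d_{AW}$ is a metric, this gives $K=d_{AW}$-$\lim_n K_n$. Given a subsequence, Part~(1) supplies a further subsequence $(K_{n_{j_l}})_l$ with $d_{AW}$-$\lim_l K_{n_{j_l}}=K''$ and $K\subseteq K''$, and it remains to show $K''\subseteq K$. Here the lower bound enters. For $y\in K''\setminus\{0\}$ I would pick $y_l\in K_{n_{j_l}}$ with $y_l\to y$ (possible since $K''=\operatorname{Li}_l K_{n_{j_l}}$), fix $\varepsilon\in(0,1)$, and set $w_l:=\tfrac{1-\varepsilon}{\varepsilon}(y-y_l)$, so that $w_l\to0$ and hence $w_l\in\eta B_2^d\subseteq K_{n_{j_l}}$ for all large $l$. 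Convexity of $K_{n_{j_l}}$ then gives
$$
(1-\varepsilon)y=(1-\varepsilon)y_l+\varepsilon w_l\in K_{n_{j_l}},
$$
so $\rho_{K_{n_{j_l}}}(\theta_y)\geq(1-\varepsilon)\|y\|$ for all large $l$; letting $l\to\infty$ and using $\rho_{K_n}(\theta_y)\to\rho_K(\theta_y)$ yields $\rho_K(\theta_y)\geq(1-\varepsilon)\|y\|$, and then $\varepsilon\to0$ gives $\rho_K(\theta_y)\geq\|y\|$, i.e. $y\in K$ because $K$ is radially closed. Thus $K''=K$.

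The hard part will be exactly the inclusion $K''\subseteq K$ in Part~(2): the hypothesis $K=\tau_{W^r}$-$\lim_n K_n$ only controls the radial functions $\rho_{K_n}$ along each \emph{fixed} direction, whereas the points $y_l$ witnessing $y\in K''$ lie along the \emph{moving} directions $\theta_{y_l}$. Writing $(1-\varepsilon)y$ as a convex combination of $y_l$ and a vanishing vector of $\eta B_2^d$ is the device that converts moving-direction information into fixed-direction information, at the negligible cost of the factor $1-\varepsilon$, and this is the only place the uniform lower bound $\eta B_2^d\subseteq K_n$ is used; it cannot be dropped, as the half-space example above shows that otherwise the $\tau_{W^r}$-limit need not even be closed, let alone agree with the $d_{AW}$-limit.
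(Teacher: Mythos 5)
Your proof is correct. For part (1) you take essentially the paper's route: extract a $d_{AW}$-convergent subsequence by compactness and then verify $K\subseteq K'$. The paper gets the subsequence by citing the compactness of $(\mathcal{K}_{0}^d,d_{AW})$ from \cite{NataliaLuisa2}, which sidesteps any discussion of the Fell topology or of the stability of $\mathcal{K}_0^d$ under Kuratowski limits, and it proves the inclusion in one line via (P4), namely $d(x,K')=\lim_m d(x,K_{n_m})\leq\lim_m d_r(x,K_{n_m})=d_r(x,K)=0$ for $x\in K$; your dilation by $1-\varepsilon$ combined with Theorem \ref{thm:caract-rWijsmanTopology} accomplishes the same thing. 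The genuine divergence is in part (2), where your argument is substantially shorter than the paper's. Both proofs reduce to showing $K''\subseteq K$ for a subsequential $d_{AW}$-limit $K''\supseteq K$ and both pick points $y_l\in K_{n_{j_l}}$ converging to a given $y\in K''$; but the paper then constructs the cones $C_j=\cco(\{x_j\}\cup\eta B_2^d)\subseteq K_{n_{m_j}}$ and carries out a delicate geometric analysis (the sections $Sec_j$ of $\eta B_2^d$, the half-spaces $H_j(u)$, angle comparisons such as $\psi_{a,x_j}<\psi_{u,x_j}$, and a compactness argument in the parameters $u_j,t_j$) to produce $\lambda_j a\in C_j$ with $\lambda_j\to 1$. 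Your identity $(1-\varepsilon)y=(1-\varepsilon)y_l+\varepsilon w_l$ with $w_l=\frac{1-\varepsilon}{\varepsilon}(y-y_l)\to 0\in\eta B_2^d$ produces the same conclusion --- a point of $K_{n_{j_l}}$ on the fixed ray through $y$ of norm $(1-\varepsilon)\|y\|$ --- in a single line, using only convexity of $K_{n_{j_l}}$ and the hypothesis $\eta B_2^d\subseteq K_{n}$, and it isolates exactly where the interior-ball hypothesis is used. The one presentational caveat is that the ``standard facts'' you lean on (Fell compactness, the identification of $d_{AW}$-convergence with Kuratowski--Painlev\'e convergence) are not recalled in the paper; you can avoid them entirely by quoting the compactness of $(\mathcal{K}_0^d,d_{AW})$ as the paper does and by obtaining the points $y_l$ as nearest points, since $\|y-y_l\|=d(y,K_{n_{j_l}})\to d(y,K'')=0$.
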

\begin{proof}
    (1)  In \cite[Proposition 3.3]{NataliaLuisa2}, it is proved that $(\mathcal{K}_{0}^d,d_{AW})$ is compact. Consequently, there exist a subsequence $(K_{n_m})_m$ and $K'\in\mathcal{K}_{0}^d$ such that 
    $K'=d_{AW}$-$\lim_{m} K_{n_m}$. In this case, by (P4), for every $x\in K$, we have that
    $$
    d(x,K')\leq d_r(x,K')=\lim_{m}d_r(x,K_{n_m})=d_r(x,K)=0
    $$
    Thus, $x\in K'$ and $K\subseteq K'$. It is straightforward that this inclusion holds for every $A\in\mathcal{K}_{0}^d$ that is the $d_{AW}$-limit of a subsequence from $(K_n)_n$.

    (2) Consider $x,y\in\mathbb{R}^d\setminus\{0\}$, and let $0\leq\psi_{x,y}\leq\pi$ be the angle between the vectors 
    $x$ and $y$. Then $\cos(\psi_{x,y})=\frac{\langle x,y\rangle}{\|x\|\|y\|}$.
    We shall prove that every subsequence of $(K_n)_n$ has a $d_{AW}$-convergent sub-subsequence whose limit is $K$. From this, it follows that the whole sequence converges to $K$.
    
    By (1) of this proposition, every subsequence  $(K_{n_m})_m$ has a $d_{AW}$-convergent sub-subsequence $(K_{n_{m_j}})_j$ with limit $A\in\mathcal{K}_{0}^d$ such that $K\subseteq A$.
    To prove that $A\subseteq K$,  consider an arbitrary $a\in A\setminus\{0\}$.  Since $A=d_{AW}$-$\lim K_{n_{m_j}}$, there exist a sequence $(x_j)_j\subseteq\mathbb{R}^d$ such that
    $x_j\in K_{n_{m_j}}$ and  $d(a,K_{n_{m_j}})=\|a-x_j\|$, for every $j$. Thus $(x_j)_j$  converges to $a$, and we can suppose  without lost of generality that $\|x_j\|, \|a\|>\eta$ for all $j$. For each $j\in\mathbb{N}$, let $Sec_j$ be the section of the ball $\eta B_2^d$ given by 
    $$
    Sec_j:=\left\{\eta u:u\in\mathbb{S}^{d-1}\text{ and }\langle u,x_j\rangle=\eta\right\},
    $$
    and let $C_j:=\cco(\{x_j\}\cup \eta B_2^d)$. By our hypothesis, $C_j\subseteq K_{n_{m_j}}$ for each $j$, i.e., $\rho_{C_j}(\cdot)\leq\rho_{K_{n_{m_j}}}(\cdot)$. 
    
    First, let us consider the case that the angle $\psi_{x_j,a}=0$ for infinitely many $j\in\mathbb{N}$. In this case, we have a subsequence $(x_{j_i})_i$ such that $a=\lambda_ix_{j_i}$ for some $\lambda_i>0$. Hence, for all $i\in\mathbb{N}$, $\theta_a=\theta_{x_{j_i}}$ and
    $$
    \|x_{j_i}\|=\rho_{C_{j_i}}(\theta_a)\leq \rho_{K_{n_{m_{j_i}}}}(\theta_a).
    $$ 
    Since $K=\tau_{W^r}$-$\lim K_n$, then, by Theorem \ref{thm:caract-rWijsmanTopology},
    $\|a\|\leq\rho_K(\theta_a)$.
    Therefore $a\in K$ as desired. 
    
    Now, suppose that $\psi_{x_j,a}>0$ for all $j>N_0$ and some positive integer $N_0$. Since $(x_j)_j$ approaches to $a$, and $\|x_j\|, \|a\|>\eta$ for all $j$, then there is an integer $N_1>N_0$ such that for every $j>N_1$ and every $\eta u\in Sec_j$ with $u\in\mathbb{S}^{d-1}$,
    \begin{equation}\label{eq:Aux2-convrW-impliesAW}
    \frac{\langle u,x_j\rangle}{\|x_j\|}=\frac{\eta}{\|x_j\|}<\frac{1}{2}+\frac{\eta}{2\|a\|}<\frac{\langle a,x_j\rangle}{\|a\|\|x_j\|}.
    \end{equation}
    It then follows that $\psi_{a,x_j}<\psi_{u,x_j}$ for all $j>N_1$. On the other hand, there also exists an integer $N_2>N_1$ such that, for all $j>N_2$ and every $\eta u\in Sec_j$ with $u\in\mathbb{S}^{d-1}$,
    \begin{equation}\label{eq:Aux3-convrW-impliesAW}
        \langle a,u\rangle>0.
    \end{equation}
    Indeed, since  $(x_j)_j$ goes to $a$, we can choose $N_2$ so that 
    $\|a-x_j\|<\frac{\eta}{2}$ for all $j>N_2$. This yields to 
    $|\langle a-x_j,u\rangle|\leq\|a-x_j\|<\frac{\eta}{2}$ from which (\ref{eq:Aux3-convrW-impliesAW}) follows.

    Now, for every $\eta u\in Sec_j$ with $u\in\mathbb{S}^{d-1}$, let
    $H_j(u)$ be the half-space $\{z\in\mathbb{R}^d:\langle z,x_j-\eta u\rangle>0\}$. Clearly $x_j\in H_j(u)$ for all $j$. We claim that
    there is an integer $N_3>N_2$ such that for every $j>N_3$ and every $\eta u\in Sec_j$, $a\in H_j(u)$. 
    Otherwise for each $i\in\mathbb{N}$, there is $\eta u_{j_i}\in Sec_{j_i}$ with $u\in\mathbb{S}^{d-1}$ such that $\langle a,x_{j_i}-\eta u_{j_i}\rangle\leq0$. Hence,
    $$
    \langle a,x_{j_i}\rangle\leq\langle a,\eta u_{j_i}\rangle\leq\|a\|\eta<\|a\|^2
    $$
    which is a contradiction, since $(x_j)_j$ approaches to $a$. 
    
    Hereafter, for every $j>N_3$, let
    $E_j$ denote the 2-dimensional subspace generated by $a$ and $x_j$.
    Notice that $E_j\cap Sec_j\neq\emptyset$ for every $j>N_3$. Moreover, for each $\eta u\in E_j\cap Sec_j$, with $u\in\mathbb{S}^{d-1}$, we have that $a,x_j\in E_j\cap H_j(u)$ and $\psi_{a,x_j}<\psi_{u,x_j}$ (see (\ref{eq:Aux2-convrW-impliesAW})). Therefore, 
    for every $j>N_3$, there is $\eta u_j\in E_j\cap Sec_j$, with $u_j\in\mathbb{S}^{d-1}$, such that $a$ belongs to the relative interior of the cone generated by the origin and the vectors $x_j$ and $u_j$. Since $j>N_3$ and the vectors $a$ and $x_j$ are linearly independent, then by (\ref{eq:Aux3-convrW-impliesAW}), there exist $\lambda_j>0$ and $t_j\in(0,1)$ such that 
    $$
    \lambda_ja=t_jx_j+(1-t_j)\eta u_j\in C_{j}\text{ for all }j>N_3.
    $$
    By compactness of $\mathbb{S}^{d-1}$ and $[0,1]$, there are subsequences $(u_{j_i})_i$ and $(t_{j_i})_i$ converging to $u_0\in\mathbb{S}^{d-1}$ and $t_0\in[0,1]$, respectively. Thus, the sequence $(\lambda_{j_i})_i$ must converge to some $\lambda_0\geq0$, and we have that $$
    \lambda_0a=t_0a+(1-t_0)\eta u_0.
    $$ 
    Since $(u_{j_i})_i$ and $(x_{j_i})_i$ converge to $u_0$ and $a$, respectively, it follows that $\langle a,u_0\rangle=\eta<\|a\|$. As a result, $a$ and $u_0$ are linearly independent, which implies that $\lambda_0=t_0=1$. Finally, from the fact that $C_{j_i}\subseteq K_{n_{m_{j_i}}}$,  we have that
    $$
    \lambda_{j_i}\|a\|\leq\rho_{C_{j_i}}(\theta_a)\leq\rho_{K_{n_{m_{j_i}}}}(\theta_a) \text{ for all }i.
    $$
    Hence, from Theorem \ref{thm:caract-rWijsmanTopology}, it follows that $\rho_K(\theta_a)\geq\lambda_0\|a\|=\|a\|$, which implies that $a\in K$. This proves that $A\subseteq K$, as desired.
\end{proof}

The next example shows that the inclusion in Proposition \ref{prop:conditionsWr-implies-dAW}-(1) might be strict even if the sequence of convex sets belongs to $\mathcal{S}_{1,b}^d$. 
\begin{example}\label{exam:truncated-parabola}
    For every $n\in\mathbb{N}$, let $P_n$ be the truncated parabola
    $$
        P_n:=\left\{(x,y)\in\mathbb{R}^2: -1\leq x\leq 1\text{ and }0\leq y\leq\frac{x^2}{n}   \right\}.
    $$
    Then $(P_n)_n\subseteq\mathcal{S}_{1,b}^2\cap\mathcal{K}_0^2$ and 
    $\{0\}=\tau_{W^r}$-$\lim P_n$. However, 
    $$
    [-1,1]\times\{0\}=d_{AW}\text{-}\lim P_n.
    $$
    It is straightforward to check that for every $n\in\mathbb{N}$,  
    $P_n\subseteq\left[-1,1\right]\times\left[0,\frac{1}{n}\right]$ and 
    $[-1,1]\times\{0\}\subseteq P_n+\frac{1}{n}B_2^2$. 
    Therefore, $(P_n)_n$ converges to $[-1,1]\times\{0\}$ with respect to $d_H$. Since both metrics $d_H$ and $d_{AW}$ generate the same topology on $\mathcal{K}_{0,b}^d$ \cite[Theorem 3.2]{SakaiYaguchi2006}, the first part is proved. On the other hand, the $\tau_{W^r}$-convergence of $(P_n)_n$ to $\{0\}$ follows directly from the pointwise convergence of the radial maps and Theorem \ref{thm:caract-rWijsmanTopology}.
\end{example}

\section{An Attouch-Wets type distance}
\label{sec:radial Attouch-Wets topology}

Let $F_{ucb}(\mathbb{R}^d,\mathbb{R})$ denote the space of maps $f:\mathbb{R}^d\rightarrow\mathbb{R}$ equipped 
with the topology $\tau_{ucb}$ of the uniform convergence on bounded sets of $\mathbb{R}^d$. See for instance \cite[Example VII.3]{Nagatabook}).

\begin{definition}\label{def:radialAW}
    The radial Attouch-Wets topology  $\tau_{AW^r}$ on $\mathcal{S}_{rc}^d$ is the topology that $\mathcal{S}_{rc}^d$ inherits from $F_{ucb}(\mathbb{R}^d,\mathbb{R})$, under the identification given by $A\rightarrow d_r(\cdot,A).$ 
\end{definition}

We can make use of Proposition \ref{prop:car-starb-conti-distance} to define the \textit{radial Attouch-Wets topology $\tau_{AW^r}$ on $\mathcal{S}_{1}^d$}. In this case, it is the topology that $\mathcal{S}_{1}^d$ inherits from $C(\mathbb{R}^d,\mathbb{R})$, endowed with $\tau_{ucb}$, under the usual identification $A\rightarrow d_r(\cdot,A)$.
It is not difficult to prove that this topology coincides with the subspace topology that $\mathcal{S}_{1}^d$ obtain from $(\mathcal{S}_{rc}^d,\tau_{AW^r})$.

As in the case of the Hausdorff, Wijsman and Attouch-Wets topologies on $CL(d)$, there is a basic relation between the metric topology $\delta$, and the topologies $\tau_{W^r}$ and $\tau_{AW^r}$ on the family of star bodies:

\begin{remark}\label{rem:dAWR-stronger-than-Wr}
   $\tau_{W^r}\subseteq\tau_{AW^r}$ on $\mathcal{S}_{rc}^d$. Additionally, on $\mathcal{S}_{rc,b}^d$,  $\tau_{W^r}\subseteq\tau_{AW^r}\subseteq\tau_{\delta}$. 
\end{remark}
The previous remark follows directly from the definition of the topologies.

Below, we show that the map $d_{AW^r}$, defined in (\ref{defn:rd-AW}), is a metric on $\mathcal{S}_{rc}^d$ that generates the radial Attouch-Wets topology on $\mathcal{S}_{rc}^d$.
\begin{theorem}\label{thm:rAW-complete-metrizable} 
$d_{AW^r}$
is a metric on $\mathcal{S}_{rc}^d$ that is compatible with $\tau_{AW^r}$. Furthermore, $(\mathcal{S}_{rc}^d,d_{AW^r})$ is a complete metric space. 
\end{theorem}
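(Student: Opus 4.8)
The plan is to verify three things in sequence: that $d_{AW^r}$ is a metric, that it is compatible with $\tau_{AW^r}$, and that the resulting metric space is complete.

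\textbf{Step 1: $d_{AW^r}$ is a metric.} Symmetry is immediate from the definition. For the triangle inequality, I would first observe that for each fixed $j$, the function $(A_1,A_2)\mapsto \sup_{\|x\|\leq j}|d_r(x,A_1)-d_r(x,A_2)|$ satisfies the triangle inequality (it is a sup of seminorms on $F(\mathbb{R}^d,\mathbb{R})$ pulled back along the injection $A\mapsto d_r(\cdot,A)$, which is finite because $d_r(x,A)\leq\|x\|$ by (P1), so the sup over $\|x\|\le j$ is at most $j$). Then $\min\{1/j,\cdot\}$ is a concave nondecreasing function vanishing at $0$, hence subadditive, so $\min\{1/j,\alpha+\beta\}\le\min\{1/j,\alpha\}+\min\{1/j,\beta\}$; taking the sup over $j$ of both sides preserves this, giving the triangle inequality for $d_{AW^r}$. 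For positivity, if $d_{AW^r}(A_1,A_2)=0$ then for every $j$ the $j$-th term vanishes, so $d_r(x,A_1)=d_r(x,A_2)$ for all $x$ with $\|x\|\le j$; letting $j\to\infty$ gives equality of the radial distance functionals, hence $A_1=A_2$ by (P3) of Remark~\ref{rem:properties-d-r}. Finiteness ($d_{AW^r}\le 1$) is built into the definition via the $\min\{1/j,\cdot\}$.

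\textbf{Step 2: Compatibility with $\tau_{AW^r}$.} I would show that $d_{AW^r}$-convergence coincides with $\tau_{ucb}$-convergence of the functions $d_r(\cdot,A_n)$, restricted to the image of $\mathcal{S}_{rc}^d$. If $A_n\to A$ uniformly on every bounded set, then $\sup_{\|x\|\le j}|d_r(x,A_n)-d_r(x,A)|\to 0$ for each $j$; given $\varepsilon>0$, pick $J$ with $1/J<\varepsilon$, so the tail terms ($j>J$) are automatically below $\varepsilon$, and the finitely many terms $j\le J$ are eventually below $\varepsilon$ — hence the sup over all $j$ is eventually below $\varepsilon$, so $d_{AW^r}(A_n,A)\to 0$. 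Conversely, if $d_{AW^r}(A_n,A)\to 0$, then for each fixed $j$ with $1/j$ exceeding the eventual value of $d_{AW^r}(A_n,A)$, the $\min$ must be achieved by the second argument, forcing $\sup_{\|x\|\le j}|d_r(x,A_n)-d_r(x,A)|\to 0$; since every bounded set is contained in some $jB_2^d$, this is exactly $\tau_{ucb}$-convergence. This identification of convergent sequences, together with the fact that both $\tau_{AW^r}$ (by its definition as a subspace topology of a metrizable-by-$\tau_{ucb}$-on-bounded-sets space — actually one should note $F_{ucb}(\mathbb{R}^d,\mathbb{R})$ itself is metrizable) and $d_{AW^r}$ are metrizable (hence first countable), shows the topologies agree. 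Alternatively and more cleanly: $F_{ucb}(\mathbb{R}^d,\mathbb{R})$ carries the standard metric $\sup_j\min\{1/j,\sup_{\|x\|\le j}|f(x)-g(x)|\}$ generating $\tau_{ucb}$, and $d_{AW^r}$ is precisely the restriction of this metric under the identification $A\mapsto d_r(\cdot,A)$; so compatibility is definitional once one recalls the standard metric on $F_{ucb}$.

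\textbf{Step 3: Completeness.} Let $(A_n)_n$ be $d_{AW^r}$-Cauchy. Then $(d_r(\cdot,A_n))_n$ is Cauchy in $F_{ucb}(\mathbb{R}^d,\mathbb{R})$, which is complete, so it converges uniformly on bounded sets to some $f:\mathbb{R}^d\to\mathbb{R}$; in particular $(d_r(\cdot,A_n))_n$ converges to $f$ pointwise. By Proposition~\ref{prop:completez-sec-d_r}, $f=d_r(\cdot,A)$ for some $A\in\mathcal{S}_{rc}^d$. Hence $d_r(\cdot,A_n)\to d_r(\cdot,A)$ in $\tau_{ucb}$, i.e. $A_n\to A$ in $(\mathcal{S}_{rc}^d,d_{AW^r})$ by Step 2. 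Thus every Cauchy sequence converges, and the space is complete.

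The main obstacle is Step 3, but the work there has essentially been front-loaded: the crucial point is that pointwise limits of radial distance functionals are again radial distance functionals, which is exactly Proposition~\ref{prop:completez-sec-d_r}, so completeness reduces to completeness of $F_{ucb}(\mathbb{R}^d,\mathbb{R})$ plus that proposition. The only mild subtlety to be careful about is that a Cauchy sequence in $\mathcal{S}_{rc}^d$ need not a priori have its limit function be \emph{finite} everywhere — but since $d_r(x,A_n)\le\|x\|$ uniformly in $n$ by (P1), the uniform-on-bounded-sets limit $f$ automatically satisfies $f(x)\le\|x\|<\infty$, so $f$ is genuinely real-valued and Proposition~\ref{prop:completez-sec-d_r} applies.
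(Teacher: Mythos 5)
Your proposal is correct and follows essentially the same route as the paper: triangle inequality termwise via subadditivity of $\min\{1/j,\cdot\}$, positivity via (P3), compatibility by matching convergence in $d_{AW^r}$ with uniform convergence on bounded sets, and completeness by reducing to Proposition~\ref{prop:completez-sec-d_r} (the paper redoes the $\varepsilon/2$ Cauchy argument by hand where you invoke completeness of $F_{ucb}(\mathbb{R}^d,\mathbb{R})$, but this is only a difference in packaging). Your closing remark that the limit function is automatically finite because $d_r(x,A_n)\le\|x\|$ is a worthwhile point the paper leaves implicit.
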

\begin{proof}
    Let $A_1, A_2, A_3\in \mathcal{S}_{rc}^d$  and $j\in\mathbb{N}$. The fact that $d_{AW^r}$ is a metric follows from (P3) and the following inequality 
    \begin{align*}
    \min\left\{\frac{1}{j},\sup_{\|x\|\leq j}\left|d_{r}(x,A_1)-d_r(x,A_2)\right|\right\}\leq\min\left\{\frac{1}{j},\sup_{\|x\|\leq j}\left|d_{r}(x,A_1)-d_r(x,A_3)\right|\right\}\\+
    \min\left\{\frac{1}{j},\sup_{\|x\|\leq j}\left|d_{r}(x,A_3)-d_r(x,A_2)\right|\right\},
    \end{align*}
    which is a consequence of the triangle inequality. 
    
    To prove that $d_{AW^r}$ and $\tau_{AW^r}$ generate the same topology, we will check that they have the same convergent nets. Let
    $(Y_i)_{i\in I}\subseteq\mathcal{S}_{rc}^d$ be a net converging in $\tau_{AW^r}$ to $Y\in\mathcal{S}_{rc}^d$. We shall see that $(Y_i)_i$ is $d_{AW^r}$-convergent to $Y$. Indeed, let $\eta>0$ and pick $j_0\in\mathbb{N}$ such that $\eta\in\left(\frac{1}{j_0+1},\frac{1}{j_0}\right]$. Since $(d_r(\cdot,Y_i))_i$ converges uniformly on $j_0B_2^d$ to $d_r(\cdot,Y)$, then there is $i_0\in I$ such that for all $i\geq i_0$, 
    $$
    \sup_{x\in j_0B_2^d}\left|d_{r}(x,Y)-d_r(x,Y_i)\right|<
    \eta\leq\frac{1}{j_0}.
    $$
    Hence, 
    $$
    \sup_{j\leq j_0}\min\left\{\frac{1}{j}, 
    \sup_{\|x\|\leq j}\left|d_{r}(x,A_1)-d_r(x,A_2)\right|\right\}<\eta.
    $$ 
    This inequality and the fact that $\eta>\frac{1}{j_0+1}$ imply that $d_{AW^r}(Y,Y_i)<\eta$ for all $i\geq i_0$. Therefore,
    $Y=d_{AW^r}\text{-}\lim Y_i$ as desired. 
    
    On the other hand, consider a sequence $(X_n)_n\subseteq\mathcal{S}_{rc}^d$ converging in $d_{AW^r}$ to $X\in\mathcal{S}_{rc}^d$. Below we prove that $(X_n)_n$ is $\tau_{AW^r}$-convergent to $X$. Let $C\subseteq\mathbb{R}^d$ be a bounded set, and let $R_0\in\mathbb{N}$ be such that $\overline{C}\subseteq R_{0}B_2^d$. Then, for every $0<\varepsilon<\frac{1}{R_0+1}$, there is $n_0\in\mathbb{N}$ such that
    $d_{AW^r}(X,X_n)<\varepsilon$ for all $n>n_0$. Hence 
    $$
    \sup_{x\in C}\left|d_{r}(x,X)-d_r(x,X_n)\right|\leq\sup_{\|x\|\leq R_0+1}\left|d_{r}(x,X)-d_r(x,X_n)\right|<\varepsilon.
    $$
    Therefore $(d_r(\cdot,X_n))_n$ converges uniformly on $C$ to $d_r(\cdot,X)$. Since $C$ is arbitrary, $(X_n)_n$ is $\tau_{AW^r}$-convergent to $X$.

    To prove that $(\mathcal{S}_{rc}^d,d_{AW^r})$ is complete, let $(A_n)_n\subseteq\mathcal{S}_{rc}^d$ be a Cauchy sequence. Then, for every $x\in\mathbb{R}^d$, $(d_r(x,A_n))_n$ is also a Cauchy sequence and we can define $f:\mathbb{R}^d\rightarrow[0,\infty)$ as 
    $$
    f(x)=\lim_{n}d_r(x,A_n).
    $$
    From Proposition \ref{prop:completez-sec-d_r}, $f=d_r(\cdot, A)$ for some $A\in\mathcal{S}_{rc}^d$. We shall show that $(A_n)_n$ is  $d_{AW^r}$-convergent to $A$.  In fact, let $\gamma>0$ be such that $\gamma\in\left(\frac{1}{j+1},\frac{1}{j}\right]$  for some $j\in\mathbb{N}$. Since $(A_n)_n$ is a Cauchy sequence, there is $M\in\mathbb{N}$ such that $d_{AW^r}(A_n,A_m)<\frac{\gamma}{2}$ for all $n,m>M$. Particularly, for all $n,m>M$,
    $$
    \sup_{\|x\|\leq j}\left|d_{r}(x,A_n)-d_r(x,A_m)\right|<\frac{\gamma}{2}.
    $$
    Thus, if we fix $n>M$ and consider an arbitrary $x\in jB_2^d$, we have that
    $$
    |d_r(x,A)-d_r(x,A_n)|=\lim_{m}|d_r(x,A_m)-d_r(x,A_n)|\leq\frac{\gamma}{2}.
    $$
    Hence, $\sup_{\|x\|\leq j}\left|d_{r}(x,A_n)-d_r(x,A_m)\right|<\gamma$ and $d_{AW^r}(A,A_n)<\gamma$ for all $n>M$. This proves that 
    $A$ is the $d_{AW^r}$-limit of $(A_n)_n$ as desired.
\end{proof}

The next lemma can be understood as a generalization of \cite[Lemma 2.1]{NataliaLuisa2} to the context of the radial Attouch-Wets distance. It significantly eases the treatment of this distance and will be used frequently through the work.  

\begin{lemma}\label{lemm:axu-calcular-rdAW}
    Let $A_1, A_2\in\mathcal{S}_{rc}^d$ and $\eta>0$. Then the following statements hold:
    \begin{enumerate}
        \item $d_r(x,A_1)=d_r(x,A_1\cap\eta B_2^d)$ for all $x\in \eta B_2^d$.

        \item $\delta(A_1\cap\eta B_2^d,A_2\cap\eta B_2^d)=\sup_{\|x\|\leq \eta}\left|d_{r}(x,A_1)-d_r(x,A_2)\right|$.

        \item $d_{AW^r}(A_1,A_2)=\sup_{j\in\mathbb{N}}\min\left\{\frac{1}{j}, 
        \delta(A_1\cap jB_2^d,A_2\cap jB_2^d)\right\}$.

        \item For every integer $j\geq 1$ and $\varepsilon\in\left(\frac{1}{j+1},\frac{1}{j}\right]$, $d_{AW^r}(A_1, A_2)<\varepsilon$ if and only if $\delta(A_1\cap jB_2^d,A_2\cap jB_2^d)<\varepsilon$.
    \end{enumerate}
\end{lemma}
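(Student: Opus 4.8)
The plan is to prove the four items in the stated order, exploiting the fact that (2) is built on (1), (3) is an immediate substitution of (2) into the definition, and (4) is an elementary monotonicity argument on top of (3). The workhorse throughout is the explicit formula (P1)--(P2) of Remark \ref{rem:properties-d-r}, together with the preliminary observation that for $A\in\mathcal{S}_{rc}^d$ and $\eta>0$ the set $A\cap\eta B_2^d$ is again a bounded star body, with $\rho_{A\cap\eta B_2^d}=\min\{\rho_A,\eta\}$; here radial closedness of $A\cap\eta B_2^d$ follows from that of $A$ and the definition of $\rho_A$ as a supremum, so $d_r(\cdot,A\cap\eta B_2^d)$ is well defined.

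For (1) I would fix $x$ with $\|x\|\le\eta$. If $x\in A_1$ then $x\in A_1\cap\eta B_2^d$, so both radial distances vanish by (P2). If $x\notin A_1$, then $x\neq 0$ and $\rho_{A_1}(\theta_x)<\|x\|\le\eta$, hence $\rho_{A_1\cap\eta B_2^d}(\theta_x)=\min\{\rho_{A_1}(\theta_x),\eta\}=\rho_{A_1}(\theta_x)$ and $x\notin A_1\cap\eta B_2^d$ as well; then (P1) gives $d_r(x,A_1)=\|x\|-\rho_{A_1}(\theta_x)=\|x\|-\rho_{A_1\cap\eta B_2^d}(\theta_x)=d_r(x,A_1\cap\eta B_2^d)$. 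For (2), since $A_1\cap\eta B_2^d$ and $A_2\cap\eta B_2^d$ are bounded, Proposition \ref{prop:caract-dHr-distanciaradial} reduces $\delta(A_1\cap\eta B_2^d,A_2\cap\eta B_2^d)$ to $\sup_{x\in\mathbb{R}^d}\bigl|d_r(x,A_1\cap\eta B_2^d)-d_r(x,A_2\cap\eta B_2^d)\bigr|$. I would split this supremum into $\|x\|\le\eta$ and $\|x\|>\eta$. On $\|x\|\le\eta$, part (1) turns the integrand into $|d_r(x,A_1)-d_r(x,A_2)|$, so that portion is exactly $\sup_{\|x\|\le\eta}|d_r(x,A_1)-d_r(x,A_2)|$. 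On $\|x\|>\eta$ one has $x\notin A_i\cap\eta B_2^d$, so (P1) gives $d_r(x,A_i\cap\eta B_2^d)=\|x\|-\min\{\rho_{A_i}(\theta_x),\eta\}$; a short case split with (P1)--(P2) at the sphere point $\eta\theta_x$ (according as $\rho_{A_i}(\theta_x)\ge\eta$ or $<\eta$) shows $d_r(\eta\theta_x,A_i)=\eta-\min\{\rho_{A_i}(\theta_x),\eta\}$. Subtracting, $\bigl|d_r(x,A_1\cap\eta B_2^d)-d_r(x,A_2\cap\eta B_2^d)\bigr|=\bigl|d_r(\eta\theta_x,A_1)-d_r(\eta\theta_x,A_2)\bigr|$ with $\|\eta\theta_x\|=\eta$, so the $\|x\|>\eta$ portion is dominated by the $\|x\|\le\eta$ portion, and (2) follows.

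Item (3) is then obtained by substituting (2) with $\eta=j$ into the definition (\ref{defn:rd-AW}) of $d_{AW^r}$. For (4), I would set $\delta_k:=\delta(A_1\cap kB_2^d,A_2\cap kB_2^d)$ and note that $(\delta_k)_k$ is non-decreasing, since by (2) it is a supremum over the nested balls $kB_2^d$. Fix $j\ge 1$ and $\varepsilon\in(\tfrac{1}{j+1},\tfrac1j]$. If $\delta_j<\varepsilon$, then $\min\{\tfrac1k,\delta_k\}\le\delta_k\le\delta_j<\varepsilon$ for $k\le j$, while $\min\{\tfrac1k,\delta_k\}\le\tfrac1k\le\tfrac1{j+1}<\varepsilon$ for $k\ge j+1$; taking the supremum over $k$ and using (3) gives $d_{AW^r}(A_1,A_2)<\varepsilon$. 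Conversely, if $d_{AW^r}(A_1,A_2)<\varepsilon$, then by (3) $\min\{\tfrac1j,\delta_j\}<\varepsilon\le\tfrac1j$, which forces $\delta_j<\tfrac1j$, hence $\delta_j=\min\{\tfrac1j,\delta_j\}<\varepsilon$.

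The only genuinely delicate point is the $\|x\|>\eta$ case of (2): one has to recognize that truncating an exterior point radially onto the sphere of radius $\eta$ leaves the relevant difference of radial distances unchanged, which is exactly what matches $d_r(x,\cdot\cap\eta B_2^d)$ on the exterior to values of $d_r(\cdot,\cdot)$ on $\eta B_2^d$. Everything else is bookkeeping with (P1)--(P2), Proposition \ref{prop:caract-dHr-distanciaradial}, and the definition of $d_{AW^r}$.
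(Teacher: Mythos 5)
Your proposal is correct and follows essentially the same route as the paper: item (1) via the formula (P1) together with $\rho_{A\cap\eta B_2^d}=\min\{\rho_A,\eta\}$, item (2) via Proposition \ref{prop:caract-dHr-distanciaradial}, item (3) by substitution, and item (4) by monotonicity of $j\mapsto\delta(A_1\cap jB_2^d,A_2\cap jB_2^d)$. The only (minor) divergence is in the reverse inequality of (2): the paper invokes the excess characterization $\delta=\max\{e_r(\cdot,\cdot),e_r(\cdot,\cdot)\}$, whose suprema run only over points of the truncated bodies and hence stay inside $\eta B_2^d$, whereas you handle the exterior points $\|x\|>\eta$ directly by showing their contribution is already attained at $\eta\theta_x$ on the sphere; both arguments are valid and of comparable length.
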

\begin{proof}
    (1) The result holds easily for $x\in A_1$. For 
    $x\in\eta B_2^d\setminus A_1$, we have that
    \begin{align*}
        d_{r}(x,A_1)=\|x\|-\rho_{A_1}(\theta_x)&=\|x\|-\min\{\rho_{A_1}(\theta_x),\eta\}\\
        &=\|x\|-\rho_{A_1\cap\eta B_2^d}(\theta_x)\\
        &=d_r(x,A_1\cap\eta B_2^d).
    \end{align*}

    (2) From (1) of this proposition and Proposition \ref{prop:caract-dHr-distanciaradial}, it follows that
    \begin{align*}
    \delta(A_1\cap\eta B_2^d,A_2\cap\eta B_2^d)&=\sup_{x\in\mathbb{R}^d}|d_r(x,A_1\cap\eta B_2^d)-d_r(x,A_2\cap\eta B_2^d)|\\
    &\geq\sup_{\|x\|\leq \eta}\left|d_{r}(x,A_1\cap\eta B_2^d)-d_r(x,A_2\cap\eta B_2^d)\right|\\
    &=\sup_{\|x\|\leq \eta}\left|d_{r}(x,A_1)-d_r(x,A_2)\right|.
    \end{align*}
    To show the reverse inequality, notice that if $x_0\in A_1\cap\eta B_2^d$, then 
    \begin{align*}
     d_r(x_0,A_2\cap\eta B_2^d)&=|d_r(x_0,A_1\cap\eta B_2^d)-d_r(x_0,A_2\cap\eta B_2^d)|\\
     &\leq\sup_{\|x\|\leq \eta}\left|d_{r}(x,A_1\cap\eta B_2^d)-d_r(x,A_2\cap\eta B_2^d)\right|\\
    &=\sup_{\|x\|\leq \eta}\left|d_{r}(x,A_1)-d_r(x,A_2)\right|.
    \end{align*}
    Similarly, for every $y_0\in A_2\cap\eta B_2^d$, we have that 
    $$
    d_r(y_0,A_1\cap\eta B_2^d)\leq\sup_{\|x\|\leq \eta}\left|d_{r}(x,A_1)-d_r(x,A_2)\right|.
    $$
    Finally, by using Proposition \ref{prop:caract-dHr-distanciaradial} again, we get that
    \begin{align*}
    \delta(A_1\cap\eta B_2^d,A_2\cap\eta B_2^d)&=\max\{e_r(A_1\cap\eta B_2^d,A_2\cap\eta B_2^d),e_r(A_2\cap\eta B_2^d,A_1\cap\eta B_2^d)\}\\
    &\leq\sup_{\|x\|\leq \eta}\left|d_{r}(x,A_1)-d_r(x,A_2)\right|.
    \end{align*}

    (3) Follows from (2) of this proposition.

    (4) Follows from (2) and (3) of this Proposition, and the definition of $d_{AW^r}$.
\end{proof}

\begin{proposition} \label{prop:subspaces-rdAW}The following statements hold:
    \begin{enumerate}
        \item $\mathcal{S}_1^d$ is a closed subset of $(\mathcal{S}_{rc}^d,d_{AW^r})$. Particularly, $(\mathcal{S}_{1}^d,d_{AW^r})$ is complete.
        
        \item $\mathcal{S}_{rc,b}^d$ is an open subset of $(\mathcal{S}_{rc}^d,d_{AW^r})$.

        \item $d_{AW^r}$ and $\delta$ generate the same topology on $\mathcal{S}_{rc,b}^d$.
    \end{enumerate}
\end{proposition}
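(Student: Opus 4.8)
The plan is to handle the three items in turn, in each case translating assertions about $d_{AW^r}$ into assertions about the radial metric $\delta$ on the truncations $A\cap jB_2^d$ via Lemma \ref{lemm:axu-calcular-rdAW}, and using Proposition \ref{prop:car-starb-conti-distance} to detect membership in $\mathcal{S}_1^d$.

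For (1), I would show $\mathcal{S}_1^d$ is closed by checking sequential closedness. If $(A_n)_n\subseteq\mathcal{S}_1^d$ and $d_{AW^r}\text{-}\lim_n A_n=A$, then by Theorem \ref{thm:rAW-complete-metrizable} (compatibility of $d_{AW^r}$ with $\tau_{AW^r}$, which is the topology of uniform convergence on bounded sets under $A\mapsto d_r(\cdot,A)$) the functions $d_r(\cdot,A_n)$ converge to $d_r(\cdot,A)$ uniformly on every bounded subset of $\mathbb{R}^d$; equivalently, $\delta(A_n\cap jB_2^d,A\cap jB_2^d)\to 0$ for each $j$ by Lemma \ref{lemm:axu-calcular-rdAW}(2)--(3). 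Since $\mathbb{R}^d=\bigcup_{j}jB_2^d$ and each $d_r(\cdot,A_n)$ is continuous (Proposition \ref{prop:car-starb-conti-distance}(2)), the locally uniform limit $d_r(\cdot,A)$ is continuous, whence $A\in\mathcal{S}_1^d$ by the same proposition. Finally, $\mathcal{S}_1^d$, being a closed subset of the complete space $(\mathcal{S}_{rc}^d,d_{AW^r})$ (Theorem \ref{thm:rAW-complete-metrizable}), is itself complete.

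For (2) and (3) the common tool is a quantitative stability estimate for boundedness, which I would isolate first: if $A\in\mathcal{S}_{rc,b}^d$ with $\rho_A\le R$ and $j\in\mathbb{N}$ satisfies $j>R+1$, then every $B\in\mathcal{S}_{rc}^d$ with $d_{AW^r}(A,B)<\tfrac1j$ in fact belongs to $\mathcal{S}_{rc,b}^d$, with $\rho_B<R+\tfrac1j$ and $\delta(A,B)\le d_{AW^r}(A,B)$. Indeed, Lemma \ref{lemm:axu-calcular-rdAW}(4) (with $\varepsilon=\tfrac1j$) gives $\delta(A\cap jB_2^d,B\cap jB_2^d)<\tfrac1j$; since $j>R$ we have $A\cap jB_2^d=A$, so $\min\{\rho_B(\theta),j\}=\rho_{B\cap jB_2^d}(\theta)<\rho_A(\theta)+\tfrac1j\le R+\tfrac1j<j$ for every $\theta\in\mathbb{S}^{d-1}$, which forces $\rho_B(\theta)<R+\tfrac1j$ and $B\cap jB_2^d=B$; then Lemma \ref{lemm:axu-calcular-rdAW}(3) yields $d_{AW^r}(A,B)\ge\min\{\tfrac1j,\delta(A,B)\}=\delta(A,B)$, using $\delta(A,B)<\tfrac1j$. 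Item (2) is then immediate, as the estimate says the $d_{AW^r}$-ball of radius $\tfrac1j$ about $A$ lies in $\mathcal{S}_{rc,b}^d$. For (3), Remark \ref{rem:dAWR-stronger-than-Wr} already gives $\tau_{AW^r}\subseteq\tau_\delta$ on $\mathcal{S}_{rc,b}^d$, so it suffices to show the identity map $(\mathcal{S}_{rc,b}^d,d_{AW^r})\to(\mathcal{S}_{rc,b}^d,\delta)$ is continuous, and since both are metric it is enough to check sequential continuity; but if $A_n\to A$ in $d_{AW^r}$ with all sets bounded and $\rho_A\le R$, then given $\eta>0$ one picks $j\in\mathbb{N}$ with $j>R+1$ and $\tfrac1j<\eta$, and for all large $n$ (so that $d_{AW^r}(A_n,A)<\tfrac1j$) the estimate gives $\delta(A_n,A)\le d_{AW^r}(A_n,A)<\eta$.

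The only genuinely delicate point is this stability estimate: one must choose the truncation level $j$ large relative to the a priori radial bound $R$ so that both cutoffs at play — the $\min\{\cdot,j\}$ implicit in $\rho_{B\cap jB_2^d}$ and the $\min\{\tfrac1j,\cdot\}$ in the definition of $d_{AW^r}$ — become inactive. Once $j>R+1$ is fixed, everything else is routine bookkeeping with Lemma \ref{lemm:axu-calcular-rdAW}.
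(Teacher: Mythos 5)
Your proposal is correct and follows essentially the same route as the paper: part (1) via continuity of locally uniform limits combined with Proposition \ref{prop:car-starb-conti-distance}, and parts (2)--(3) via a stability estimate showing that a small $d_{AW^r}$-ball around a bounded star body contains only bounded star bodies, after which the cutoffs in Lemma \ref{lemm:axu-calcular-rdAW} become inactive and $\delta$ and $d_{AW^r}$ agree locally. Your quantitative version (taking $j>R+1$ and reading off $\rho_B<R+\tfrac1j$ directly from the radial functions) is a slightly cleaner packaging of the paper's argument, which instead fixes $A\subseteq j_0B_2^d$, works with the ball of radius $\tfrac{1}{2j_0}$, and rules out unbounded perturbations by a contradiction with collinear points.
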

\begin{proof}

(1)   
Let $A\in\mathcal{S}_{rc}^d$ be such that $A=d_{AW^r}\text{-}\lim A_n$, for some $(A_n)_n\subseteq\mathcal{S}_1^d$. By Proposition \ref{prop:car-starb-conti-distance}, $(d_{r}(\cdot,A_n))_n\subseteq C(\mathbb{R}^d,\mathbb{R})$. Since $(d_{r}(\cdot,A_n))_n$ converges uniformly on compact sets to  $d_r(\cdot,A)$,  $d_r(\cdot,A)$ must be a continuous function. Therefore, by Proposition \ref{prop:car-starb-conti-distance}, $A\in\mathcal{S}_{1}^d$. The completeness of $(\mathcal{S}_{1}^d,d_{AW^r})$ follows from the fact that it is a closed subset of a complete metric space (Theorem \ref{thm:rAW-complete-metrizable}).

(2) Let $A\in\mathcal{S}_{rc,b}^d$ and let $j_0\geq1$ be an integer such that $A\subseteq j_0B_2^d$. We shall see that if $0<\varepsilon<\frac{1}{2j_0}$, then, for every $X\in\mathcal{S}_{rc}^d$ with $d_{AW^r}(A,X)<\varepsilon$, 
\begin{equation}\label{eq:Src-bouded}
    X\subseteq2j_0B_2^d.
\end{equation}
Suppose that there is $x\in X$ such that $\|x\|>2j_0$. Then $2j_0\theta_{x}\in X\cap 2j_0B_2^d$. This, in combination with Lemma \ref{lemm:axu-calcular-rdAW}-(1)-(3), yields to
$$
d_r(2j_0\theta_{x},A)=d_r(2j_0\theta_{x},A\cap2j_0B_2^d)\leq\delta(A\cap2j_0B_2^d,X\cap2j_0B_2^d)<\varepsilon<\frac{1}{2j_0}.
$$
Thus, there is $a\in A$ such that $0$, $2j_0\theta_x$ and $a$ are collinear and $\|a-2j_0\theta_{x}\|<\frac{1}{2j_0}$. This yields to $\|a\|>j_0$ which is a contradiction, since $A\subseteq j_0B_2^d$. 

(3) We shall show that the identity 
$I:(\mathcal{S}_{rc,b}^d,\delta)\rightarrow(\mathcal{S}_{rc,b}^d,d_{AW^r})$  is a homeomorphism. From Proposition \ref{prop:caract-dHr-distanciaradial}, $d_{AW^r}(A_1,A_2)\leq\delta(A_1,A_2)$ for all $A_1,A_2\in\mathcal{S}_{rc,b}^d$. Hence, $I$
is continuous. To prove that it is a homeomorphism, let $A\in\mathcal{S}_{rc,b}^d$  be such that 
$A\subseteq j_0B_2^d$ for some $j_0\geq1$ integer, and consider $0<\varepsilon<\frac{1}{2j_0}$. From (\ref{eq:Src-bouded}), it follows that $X\subseteq2j_0B_2^d$, for every $X\in\mathcal{S}_{rc}^d$ with $d_{AW^r}(A,X)<\varepsilon$. Thus, from Lemma \ref{lemm:axu-calcular-rdAW}-(3), if $d_{AW^r}(A,X)<\varepsilon$, then
$$
\delta(A,X)=\delta(X\cap2j_0B_2^d,A\cap2j_0B_2^d)<\varepsilon.
$$
Therefore $I$ is a homeomorphism.
\end{proof}

\subsection{On the relation with the  Attouch-Wets topology}
\label{sec:rel-AWr-AW}
Recall that the \textit{Attouch-Wets topology $\tau_{AW}$} on $CL(d)$ is the topology that $CL(d)$ inherits from the space of continuous functions $C(\mathbb{R}^d,\mathbb{R})$, equipped with the topology of the uniform convergence on bounded sets of $\mathbb{R}^d$, under the identification $C\rightarrow d(\cdot,C)$ given by the distance functional (see e.g., \cite[Definition 3.1.2]{Beer1993}).

It is well-known that $\tau_{AW}$ is a metrizable topology. In fact, the following distance, which is called \textit{the Attouch-Wets distance $d_{AW}$}, is an admissible distance for $\left(CL(d),\tau_{AW}\right)$. For $C_1,C_2\in CL(d)$, we define
\begin{equation}
\label{eq:dAW}
d_{AW}(C_1,C_2):=\sup_{j\in\mathbb{N}}\left\{\min\left\{\frac{1}{j},\sup_{\|x\|<j}|d(x,C_1)-d(x,C_2)|\right\}\right\}.
\end{equation}
This definition is the one used in \cite{SakaiYaguchi2006}. However it is equivalent to the Attouch-Wets distance defined in \cite[Definition 3.1.2]{Beer1993}. We refer \cite{Beer1993} for a thorough treatment of $\tau_{AW}$.

Below we exhibit the relation between $d_{AW}$ and $d_{AW^r}$ on the families of closed star bodies and of closed convex sets $\mathcal{K}_0^d$. We begin by showing 
that \cite[Lemma 2.1]{NataliaLuisa2} holds not only for $\mathcal{K}_0^d$ but for all closed star bodies. This result allows to prove that convergence in the radial Attouch-Wets topology implies convergence in the Attouch-Wets topology (see Proposition \ref{prop:rdAW-es-top-strnger-than-rdAW}).


\begin{lemma}\label{lem:extension-resultado-Natalia}
    Let $\eta>0$ and let $A\in\mathcal{S}_{rc}^d$ be closed. Then, for all $x\in\eta B_2^d$,
    \begin{equation}\label{eq:Aux-extension-lema-estrellados}
          d(x,A)=d(x,A\cap\eta B_2^d). 
    \end{equation}
    Particularly, for every $A_1,A_2\in\mathcal{S}_{rc}^d$, the following hold:
    \begin{enumerate}
         \item $d_{AW}(A_1,A_2)=\sup_{j\in\mathbb{N}}\min\left\{\frac{1}{j}, 
        d_H(A_1\cap jB_2^d,A_2\cap jB_2^d)\right\}$.

        \item For every integer $j\geq 1$ and $\varepsilon\in\left(\frac{1}{j+1},\frac{1}{j}\right]$, $d_{AW}(A_1, A_2)<\varepsilon$ if and only if $d_{H}(A_1\cap jB_2^d,A_2\cap jB_2^d)<\varepsilon$.
    \end{enumerate}
\end{lemma}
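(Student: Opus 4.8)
The core of the statement is the identity \eqref{eq:Aux-extension-lema-estrellados}: for a closed star body $A$ and $x \in \eta B_2^d$, one has $d(x,A) = d(x, A\cap \eta B_2^d)$. The plan is to prove this first, and then deduce (1) and (2) exactly as in the radial case (Lemma~\ref{lemm:axu-calcular-rdAW}(3)--(4)), since the Attouch-Wets distance $d_{AW}$ has the same formula as $d_{AW^r}$ with $d_r$ replaced by $d$ and $\delta$ replaced by $d_H$, and $d_H(C,D) = \sup_x |d(x,C)-d(x,D)|$ for compact $C,D$.

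For the identity itself: the inequality $d(x,A) \le d(x, A\cap\eta B_2^d)$ is trivial since $A\cap\eta B_2^d \subseteq A$. For the reverse, let $x \in \eta B_2^d$ and pick $a \in A$ realizing (or nearly realizing) $d(x,A) = \|x-a\|$; such $a$ exists because $A$ is closed. I want to produce a point of $A\cap \eta B_2^d$ at least as close to $x$. If $\|a\| \le \eta$ we are done. If $\|a\| > \eta$, the key geometric fact is that the segment $[0,a]$ lies in $A$ (star-shapedness with respect to the origin), and this segment crosses the sphere $\eta\mathbb{S}^{d-1}$ at a point $a' = \lambda a$ with $0 \le \lambda \le 1$, $\|a'\| = \eta$, hence $a' \in A \cap \eta B_2^d$. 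One then checks $\|x - a'\| \le \|x-a\|$: since $\|x\| \le \eta = \|a'\|$ and $a'$ lies on the segment from $0$ to $a$, the point $a'$ is no farther from $x$ than $a$ is — concretely, on the ray $\mathbb{R}_{\ge 0}\,\theta_a$ the function $t \mapsto \|x - t\theta_a\|$ is convex, it attains its minimum at $t = \langle x, \theta_a\rangle \le \|x\| \le \eta \le \|a\|$, so it is nondecreasing on $[\eta, \|a\|]$, giving $\|x-a'\| \le \|x-a\|$. Hence $d(x, A\cap\eta B_2^d) \le \|x-a'\| \le \|x-a\| = d(x,A)$.

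With \eqref{eq:Aux-extension-lema-estrellados} in hand, (1) follows: fix $j \in \mathbb{N}$; for $\|x\| \le j$ both $d(x,A_1)$ and $d(x,A_2)$ agree with $d(x, A_i \cap jB_2^d)$, so
$$\sup_{\|x\|<j}|d(x,A_1)-d(x,A_2)| = \sup_{\|x\|<j}|d(x,A_1\cap jB_2^d)-d(x,A_2\cap jB_2^d)| \le d_H(A_1\cap jB_2^d, A_2\cap jB_2^d),$$
using the last formula for $d_H$ from the preliminaries. For the reverse direction one notes that every point of $A_i \cap jB_2^d$ lies in $jB_2^d$, so the Hausdorff distance of the truncations is witnessed by points $x$ with $\|x\| \le j$, where $d(x, A_i\cap jB_2^d) = d(x,A_i)$ again; a small care is needed because the supremum in \eqref{eq:dAW} is over $\|x\| < j$ (open ball) rather than $\|x\|\le j$, but this is harmless since both $d(\cdot, A_i)$ are continuous, so the sup over the open ball equals the sup over the closed ball. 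Taking $\sup_j \min\{1/j, \cdot\}$ of both sides gives (1). Statement (2) is then immediate from (1) together with the monotonicity argument already used in Lemma~\ref{lemm:axu-calcular-rdAW}(4): if $\varepsilon \in (\tfrac{1}{j+1}, \tfrac1j]$, the terms with index $> j$ contribute at most $\tfrac{1}{j+1} < \varepsilon$, so $d_{AW}(A_1,A_2) < \varepsilon$ reduces to the single condition $\min\{\tfrac1j, d_H(A_1\cap jB_2^d, A_2\cap jB_2^d)\} < \varepsilon$, i.e. $d_H(A_1\cap jB_2^d, A_2\cap jB_2^d) < \varepsilon$ (since $\tfrac1j \ge \varepsilon$ would otherwise force the min to be $\tfrac1j \ge \varepsilon$).

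The only genuinely delicate point is the projection-onto-the-truncation estimate $\|x - a'\| \le \|x-a\|$; everything else is bookkeeping parallel to the already-proved radial statements. I expect the open-versus-closed-ball subtlety in the supremum to require one sentence of justification via continuity of $d(\cdot, A_i)$, but no real difficulty.
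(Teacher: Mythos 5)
Your proof is correct and follows essentially the same route as the paper: the heart of the matter is the truncation identity, proved by showing that a nearest point $a\in A$ with $\|a\|>\eta$ can be replaced by the rescaled point $\eta\theta_a\in A\cap\eta B_2^d$ without increasing the distance to $x$, which is exactly the content of the paper's argument (there phrased as a proof by contradiction using the orthogonal projection of $x$ onto $\langle a\rangle$, split into two cases according to the sign of $\langle x,a\rangle$, versus your direct monotonicity of $t\mapsto\|x-t\theta_a\|$ beyond its minimizer). Parts (1) and (2) are then routine bookkeeping in both versions, and your handling of the open-versus-closed-ball supremum via the $1$-Lipschitz continuity of $d(\cdot,A_i)$ is a correct, if minor, extra detail.
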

\begin{proof}
    Let $x\in\eta B_2^d$ and consider the set $W=\left\{a\in A: d(x,A)=\|x-a\|\right\}$. Notice that $W\neq\emptyset$, since $A$ is closed. We
    shall show that $W\subseteq\eta B_2^d$. From this, it follows that
    $d(x,A)\leq d(x,A\cap\eta B_2^d)\leq\|x-a\|=d(x,A)$ for any $a\in W$, which proves the equality.
    
    We proceed by contradiction. Suppose that there is $a\in W$ such that $\|a\|>\eta$. Let $p$ denote the orthogonal projection $\frac{\langle x,a\rangle}{\|a\|^2}a$. Observe that $\|p\|\leq \eta$ and that $\frac{|\langle x,a\rangle|}{\|a\|^2}<1$. Thus, 
    $p\in(-a,a)\cap\eta B_2^d$. If $p\in(-a,0]\cap\eta B_2^d$, then $\|p\|<\|p-a\|$ and
    \begin{equation*}
           \|x\|^2=\|x-p\|^2+\|p\|^2<\|x-p\|^2+\|p-a\|^2=\|x-a\|^2=d(x,A)^2
    \leq\|x\|^2
    \end{equation*}
    which is a contradiction. On the other hand, if $p\in[0,a)\cap\eta B_2^d$, then $p\in A$ because $A\in\mathcal{S}_{rc}^d$. Moreover, 
    let us denote by $\mathbb{R}^{+}a$ the ray $\left\{ta: t\geq0\right\}$. Then, since $\mathbb{R}^{+}a$ is a closed convex set and $p\neq a$, we must have that
    $$
    d(x,\mathbb{R}_{+}a)=\|x-p\|<\|x-a\|.
    $$ 
    However, $\|x-a\|=d(x,A)\leq\|x-p\|$ which leads to a contradiction with the previous inequality. This yields to $W\subseteq\eta B_2^d$ as desired.

    To finish the proof, notice that (1) and (2) follow directly from the equality in  (\ref{eq:Aux-extension-lema-estrellados}) and the definition of the Hausdorff and the Attouch-Wets metrics.
\end{proof}

The following relation between $d_{AW}$ and $d_{AW^r}$ can be understood as an extension of (\ref{eq:d-H-menor-rd-H}) to the context of unbounded star bodies.

\begin{proposition}\label{prop:rdAW-es-top-strnger-than-rdAW}
    For every pair $A_1,A_2\subseteq\mathbb{R}^d$ of closed star bodies, inequality in (\ref{eq:dAW-es-menor-rdAW}) holds.
\end{proposition}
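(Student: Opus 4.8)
The plan is to reduce the inequality to the classical compact-case inequality~(\ref{eq:d-H-menor-rd-H}) by means of the two localization lemmas already proved. Since $A_1,A_2$ are closed star bodies, Lemma~\ref{lem:extension-resultado-Natalia}(1) gives
\[
d_{AW}(A_1,A_2)=\sup_{j\in\mathbb{N}}\min\left\{\tfrac{1}{j},\, d_H\!\left(A_1\cap jB_2^d,\,A_2\cap jB_2^d\right)\right\},
\]
while Lemma~\ref{lemm:axu-calcular-rdAW}(3) gives
\[
d_{AW^r}(A_1,A_2)=\sup_{j\in\mathbb{N}}\min\left\{\tfrac{1}{j},\, \delta\!\left(A_1\cap jB_2^d,\,A_2\cap jB_2^d\right)\right\}.
\]
Thus it suffices to compare these two suprema term by term.

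First I would check that for each $j\in\mathbb{N}$ the truncation $A_i\cap jB_2^d$ is a \emph{compact} star body: it is an intersection of closed sets, hence closed, and closed star sets are always star bodies; it is bounded, being contained in $jB_2^d$; and it is nonempty, since it contains the origin. In particular $\delta(A_1\cap jB_2^d,A_2\cap jB_2^d)$ is finite, so no $\min$ with $\tfrac1j$ is vacuous. Then, applying~(\ref{eq:d-H-menor-rd-H}) to the pair of compact star bodies $A_1\cap jB_2^d$ and $A_2\cap jB_2^d$, one obtains $d_H(A_1\cap jB_2^d,A_2\cap jB_2^d)\le \delta(A_1\cap jB_2^d,A_2\cap jB_2^d)$ for every $j$. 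Since $t\mapsto\min\{\tfrac1j,t\}$ is nondecreasing and $\sup$ preserves the inequality, taking $\min\{\tfrac1j,\cdot\}$ of both sides and then the supremum over $j$ yields precisely $d_{AW}(A_1,A_2)\le d_{AW^r}(A_1,A_2)$, which is~(\ref{eq:dAW-es-menor-rdAW}).

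I do not expect a genuine obstacle here: all the substantive work has been front-loaded into Lemma~\ref{lem:extension-resultado-Natalia} (localizing the distance functional to a ball for closed star bodies, hence the truncated-Hausdorff formula for $d_{AW}$) and Lemma~\ref{lemm:axu-calcular-rdAW} (the analogous localization for the radial distance functional, hence the truncated-radial formula for $d_{AW^r}$), together with the known inequality~(\ref{eq:d-H-menor-rd-H}). The only point that deserves an explicit sentence is the verification that each truncation $A_i\cap jB_2^d$ is a compact star body, so that~(\ref{eq:d-H-menor-rd-H}) genuinely applies; after that the argument is a monotonicity of $\min$ and $\sup$.
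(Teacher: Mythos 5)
Your proposal is correct and follows exactly the paper's route: the paper's own proof is the one-line observation that the inequality follows from (\ref{eq:d-H-menor-rd-H}), Lemma \ref{lemm:axu-calcular-rdAW}-(3) and Lemma \ref{lem:extension-resultado-Natalia}-(1), which is precisely the termwise comparison you carry out. Your added check that each truncation $A_i\cap jB_2^d$ is a non-empty compact star body is a sensible (if routine) detail the paper leaves implicit.
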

\begin{proof}
    Follows from inequality in (\ref{eq:d-H-menor-rd-H}), Lemma \ref{lemm:axu-calcular-rdAW}-(3) and Lemma \ref{lem:extension-resultado-Natalia}-(1).
\end{proof}


\begin{proposition} The following statements hold:
\begin{enumerate}
    \item If $(A_n)_n\subseteq\mathbb{R}^d$ is a sequence of closed star bodies converging in $d_{AW^r}$ to $A\in\mathcal{S}_{rc}^d$, then $\overline{A}=d_{AW}\text{-}\lim_{n}A_n$.
    
    \item On $\mathcal{S}_{1}^d$, $\tau_{AW}\subsetneq\tau_{AW^r}$.
\end{enumerate}
\end{proposition}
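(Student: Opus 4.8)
\emph{Part (1).} The plan is to push $d_{AW^r}$-convergence down to the truncations $A_n\cap jB_2^d$, observe that the limit $A$ is in fact closed, and then translate everything into $d_{AW}$ via the inequality $d_H\le\delta$ and Lemma~\ref{lem:extension-resultado-Natalia}. First, from $d_{AW^r}(A_n,A)\to0$ and Lemma~\ref{lemm:axu-calcular-rdAW}(3) one gets $\sup_{j}\min\{\tfrac1j,\delta(A_n\cap jB_2^d,A\cap jB_2^d)\}\to0$, hence for each fixed integer $j\ge1$, $\delta(A_n\cap jB_2^d,A\cap jB_2^d)\to0$; equivalently, $\rho_{A_n\cap jB_2^d}$ converges uniformly on $\mathbb{S}^{d-1}$ to $\rho_{A\cap jB_2^d}=\min\{\rho_A,j\}$. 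Now each $A_n\cap jB_2^d$ is a closed bounded star body, and the radial function of a closed star body is upper semicontinuous (for $c>0$ its superlevel set $\{\theta:\rho(\theta)\ge c\}$ coincides with $\{\theta: c\theta\in A\}$, which is closed); since a uniform limit of bounded upper semicontinuous functions is upper semicontinuous, $\min\{\rho_A,j\}$ is upper semicontinuous for every $j$, and therefore so is $\rho_A$. Finally, a star body with upper semicontinuous radial function is closed: if $x=\lim_k a_k$ with $a_k\in A\setminus\{0\}$, then $\|x\|\le\limsup_k\rho_A(\theta_{a_k})\le\rho_A(\theta_x)$, so $x\in A$ by radial closedness. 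Thus $A=\overline A$.

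Once $A$ is closed, each $A\cap jB_2^d$ is compact, so \eqref{eq:d-H-menor-rd-H} applies and gives $d_H(A_n\cap jB_2^d,A\cap jB_2^d)\le\delta(A_n\cap jB_2^d,A\cap jB_2^d)\to0$ for every $j$. Since $A_n$ and $A=\overline A$ are closed star bodies, Lemma~\ref{lem:extension-resultado-Natalia}(1) yields
\[
d_{AW}(A_n,\overline A)=\sup_{j\in\mathbb{N}}\min\Bigl\{\tfrac1j,\ d_H(A_n\cap jB_2^d,\overline A\cap jB_2^d)\Bigr\},
\]
and a standard argument (the terms with $j$ large are bounded by $\tfrac1j$, while the finitely many remaining terms each tend to $0$) shows the right-hand side converges to $0$. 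Hence $\overline A=d_{AW}$-$\lim_n A_n$. I expect the only genuinely delicate point to be the middle step: the $d_{AW^r}$-limit $A$ is a priori merely a star body, so the classical Attouch--Wets distance is not even defined against it until one knows $A$ is closed, and the way to obtain this is to descend to the truncations through Lemma~\ref{lemm:axu-calcular-rdAW}(3) and exploit upper semicontinuity; everything afterwards is bookkeeping with the two lemmas above.

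\emph{Part (2).} The inclusion $\tau_{AW}\subseteq\tau_{AW^r}$ on $\mathcal{S}_1^d$ is immediate from Proposition~\ref{prop:rdAW-es-top-strnger-than-rdAW}: every element of $\mathcal{S}_1^d$ is closed (Proposition~\ref{prop:car-starb-conti-distance}(1)), so $d_{AW}\le d_{AW^r}$ there and the identity map $(\mathcal{S}_1^d,d_{AW^r})\to(\mathcal{S}_1^d,d_{AW})$ is $1$-Lipschitz, hence continuous. For strictness I would invoke Moszyńska's sequence $(A_n)_n$ of Example~\ref{exam:ejemploMoszynska}, which lies in $\mathcal{S}_{1,b}^d$: it converges to $B_2^d$ in $d_H$, hence in $d_{AW}$ (directly from \eqref{eq:dAW} and the identity $d_H(C,D)=\sup_{x\in\mathbb{R}^d}|d(x,C)-d(x,D)|$ for compact $C,D$, which gives $d_{AW}\le d_H$), so $A_n\to B_2^d$ in $\tau_{AW}$; but $(A_n)_n$ does not converge to $B_2^d$ in the radial metric $\delta$, and $d_{AW^r}$ and $\delta$ induce the same topology on $\mathcal{S}_{rc,b}^d\supseteq\mathcal{S}_{1,b}^d$ by Proposition~\ref{prop:subspaces-rdAW}(3), so $(A_n)_n$ does \emph{not} converge to $B_2^d$ in $\tau_{AW^r}$. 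As both topologies are metrizable, hence Hausdorff, this discrepancy forces $\tau_{AW}\subsetneq\tau_{AW^r}$ on $\mathcal{S}_{1,b}^d$, and a fortiori on $\mathcal{S}_1^d$. No real obstacle arises in this part beyond assembling these already-established facts.
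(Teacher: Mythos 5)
Your proposal is correct, and part (2) is essentially the paper's own argument (inclusion via $d_{AW}\leq d_{AW^r}$, strictness via Example \ref{exam:ejemploMoszynska} combined with Proposition \ref{prop:subspaces-rdAW}-(3)). Part (1), however, takes a genuinely different route. The paper first invokes completeness of $(CL(d),d_{AW})$ to produce a $d_{AW}$-limit $Z$ of the (automatically $d_{AW}$-Cauchy, by Proposition \ref{prop:rdAW-es-top-strnger-than-rdAW}) sequence $(A_n)_n$, and then identifies $Z$ with $\overline{A}$ by a two-sided argument with distance functionals and (P4). You instead prove that the $d_{AW^r}$-limit $A$ is itself closed: the truncated radial functions $\min\{\rho_{A_n},j\}$ are upper semicontinuous because the $A_n$ are closed, uniform limits of usc functions are usc, and a star body whose radial function is usc is closed; after that, $d_H\leq\delta$ on the truncations together with Lemmas \ref{lemm:axu-calcular-rdAW}-(3) and \ref{lem:extension-resultado-Natalia}-(1) gives the $d_{AW}$-convergence directly. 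Both arguments are sound (I checked the usc characterization $\{\theta:\rho_A(\theta)\geq c\}=\{\theta:c\theta\in A\}$ for radially closed sets, the passage from usc of all truncations to usc of $\rho_A$, and the closedness deduction; all hold). What each buys: the paper's proof is shorter because it outsources the existence of the limit to the completeness of $(CL(d),d_{AW})$ and never needs to decide whether $A$ is closed; your proof is more self-contained on that point and actually yields a slightly stronger conclusion than the statement asks for, namely that the class of closed star bodies is closed in $(\mathcal{S}_{rc}^d,d_{AW^r})$, so that $\overline{A}=A$ and the closure in the statement is superfluous. The only steps you compress are the implication ``$\min\{\rho_A,j\}$ usc for all $j$ $\Rightarrow$ $\rho_A$ usc'' (which needs the observation that any superlevel set $\{\rho_A\geq c\}$ with $c<\infty$ equals $\{\min\{\rho_A,j\}\geq c\}$ for $j>c$) and the case $\rho_A(\theta_x)=\infty$ in the closedness argument; both are routine.
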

\begin{proof}
    (1) By Proposition \ref{prop:rdAW-es-top-strnger-than-rdAW}, $(A_n)_n$ is a Cauchy sequence in $(CL(d),d_{AW})$. Since the later is a complete metric space (see e.g., \cite[Theorem 3.1.3]{Beer1993}), there is $Z\in CL(d)$ such that $Z=d_{AW}\text{-}\lim_{n}A_n$. We shall see that $Z=\overline{A}$. Indeed, by (P4), for every $a\in A$ we have that
    $$
    d(a,Z)=\lim_{n}d(a,A_n)\leq\lim_{n}d_r(a,A_n)=d_r(a,A)=0.
    $$
    Therefore $A\subseteq Z$, and $\overline{A}\subseteq Z$. To show the reverse inclusion, let $z\in Z$ and consider a sequence $(x_n)_n\subseteq\mathbb{R}^d$ such that $x_n\in A_n$ and $d(z,A_n)=\|z-x_n\|$ for all $n$. Clearly, $(x_n)_n$ converges to $z$. Moreover, there are $j_0, n_0\in\mathbb{N}$ such that $\|z\|<j_0$ and $\|x_n\|<j_0$ for all $n>n_0$. Now, consider $0<\varepsilon<\frac{1}{j_0}$. Then, there is $n_1>n_0$ such that $d_{AW^r}(A,A_n)<\varepsilon$. Hence for all $n>n_1$,
    $$
    d_r(x_n,A)=|d_r(x_n,A)-d_r(x_n,A_n)|\leq\sup_{\|x\|\leq j_0}\left|d_{r}(x,A)-d_r(x,A_n)\right|<\varepsilon,
    $$
    which proves that $\lim_{n}d_r(x_n,A)=0$. By (P4), for every $n$, we have that
    $$
    d(x_n,\overline{A})=d(x_n,A)\leq d_r(x_n,A),
    $$ 
    which yields to $d(z,\overline{A})=0$. Therefore $z\in\overline{A}$, and thus $Z\subseteq\overline{A}$ as desired.
  
    (2) The inclusion follows directly from inequality (\ref{eq:dAW-es-menor-rdAW}). To check that
    $\tau_{AW}\neq\tau_{AW^r}$, recall that Example \ref{exam:ejemploMoszynska} exhibits a sequence in $\mathcal{S}_{1,b}^d$ 
    converging in $d_H$ (and thus in $d_{AW}$) to $B_2^d$, but such that it is not $\delta$-convergent to $B_2^d$. Thus, by
    Proposition \ref{prop:subspaces-rdAW}-(3), this sequence is not $d_{AW^r}$-convergent to $B_2^d$ either. This proves the result.
\end{proof}

In the next remark we detail some basic properties of the radial Attouch-Wets topology on $\mathcal{K}_0^d$.

\begin{remark}\label{rem:d_AWr de convexos}
\begin{enumerate}
    \item On $\mathcal{K}_0^d$, $\tau_{AW}\subsetneq\tau_{AW^r}$. 

    \item On $\mathcal{K}_{0,b}^d$, $\delta$ and $d_{AW^r}$ generate the same topology. 
    \item The metric $d_H$, $d_{AW}$, $\delta$, and $d_{AW^r}$ generate the same topology on $\mathcal{K}_{(0),b}^d$.
\end{enumerate}
    
\end{remark}
\begin{proof}
    (1) By inequality (\ref{eq:dAW-es-menor-rdAW}), $\tau_{AW}\subseteq\tau_{AW^r}$. To prove that the inclusion is strict, let $(\theta_{n})_n\subseteq\mathbb{S}^{d-1}\setminus\{e_1\}$ be a sequence converging to $e_1$. Then, it is not difficult to check that the sequence of segments $\left([0,\theta_n]\right)_n$   converges in $d_H$ (and therefore in $d_{AW}$) to $[0,e_1]$. However, for all $n$,
    $$
    \delta\left([0,e_1],[0,\theta_n]\right)\geq\left|\rho_{[0,e_1]}(\theta_n)-\rho_{[0,\theta_n]}(\theta_n)\right|=1.
    $$
    Hence, $\left([0,\theta_n]\right)_n$ does not converge in $\delta$ to $[0,e_1]$. By Proposition \ref{prop:subspaces-rdAW}-(3),  $\left([0,\theta_n]\right)_n$ does not converge in $d_{AW^r}$ to $[0,e_1]$ either.  
    
    (2) Follows from Proposition \ref{prop:subspaces-rdAW}-(3).

    (3) Follows from Proposition \ref{prop:subspaces-rdAW}-(3) and the well-known fact that on $\mathcal{K}_{(0),b}^d$, the metrics $\delta$ and $d_{H}$ generate the same topology.
\end{proof}

\section{Dualities}
\label{sec:dualities}


A \textit{duality} for the family $\mathcal{S}_{rc}^d$ (resp. $\mathcal{S}_{1}^d$, $\mathcal{S}_{1,b}^d$ or $\mathcal{S}_{1,(0),b}^d$) is a map $\mathcal{T}:\mathcal{S}_{rc}^d\rightarrow\mathcal{S}_{rc}^d$ (resp. $\mathcal{S}_{1}^d$, $\mathcal{S}_{1,b}^d$ or $\mathcal{S}_{1,(0),b}^d$) with the following properties:
\begin{itemize}
    \item [(D1)] $\mathcal{T}\circ\mathcal{T}$ is the identity map.
    \item [(D2)] For every $A_1,A_2\in\mathcal{S}_{rc}^d$, if $A_1\subseteq A_2$, then
    $\mathcal{T}(A_2)\subseteq\mathcal{T}(A_1)$.
\end{itemize}
This definition as well as basic results and recent developments regarding dualities can be consulted in \cite{ArtsteinMilman2007,ArtsteinMilman2008,ZooDualities,Slomka2011}.


The star duality $\Phi$, defined in (\ref{eq:duality-Src}), was introduced in \cite[Definition 9.11]{MMilmanRotem}. However, as it is pointed out there, this duality was originally studied in \cite{Moszynska1999} for the  family of compact star bodies $\mathcal{S}_{1,(0),b}^d$. Indeed,  \cite[Definition 3.2]{Moszynska1999} introduces the duality $\phi:\mathcal{S}_{1,(0),b}^d\rightarrow\mathcal{S}_{1,(0),b}^d$ defined, for every $A\in\mathcal{S}_{1,(0),b}^d$, as
\begin{equation}\label{eq:duality-Moszynska}
\phi(A)=\overline{\mathbb{R}^d\setminus i(A)}. 
\end{equation}
Here $i(A)$ is the image of $A$ under the spherical inversion $i:\mathbb{R}^d\setminus\{0\}\rightarrow\mathbb{R}^d\setminus\{0\}$, given by $i(x)=\frac{x}{\|x\|^2}$. 
As highlighted in \cite{MMilmanRotem}, both of the dualities $\Phi$ and $\phi$ agree on $\mathcal{S}_{1,(0),b}^d$. That is,
\begin{equation}\label{eq:dualitiesMosz-and-Milman}
\Phi(A)=\overline{\mathbb{R}^d\setminus i(A)}\text{ for every }A\in\mathcal{S}_{1,(0),b}^d,
\end{equation}
see \cite{Moszynska1999} or \cite[$\mathsection$15.4]{MoszynskaBook}. 

It is worth noticing that equality (\ref{eq:dualitiesMosz-and-Milman}) does not hold on $\mathcal{S}_{rc}^d$. In fact, for every $u\in\mathbb{S}^{d-1}$,  $\Phi([0,u])=\mathbb{R}^{d}\setminus\{tu:t>1\}$ and $\overline{\mathbb{R}^d\setminus i([0,u])}=\mathbb{R}^d$. 
The next proposition exhibits the relation between $\Phi$ and the expression in the right side of (\ref{eq:dualitiesMosz-and-Milman}) on the family $\mathcal{S}_{rc}^d$.

\begin{proposition}\label{prop:duality-S1}
    For every $A\in\mathcal{S}_{rc}^d$, 
    $\Phi(A)=rc(\mathbb{R}^d\setminus i(A)).$ 
    Particularly, if $X\in\mathcal{S}_{1}^{d}$, then $\Phi(X)\in\mathcal{S}_{1}^{d}$ and
    $$
    \Phi(X)=\overline{\mathbb{R}^d\setminus i(X)}\textit{ for every }X\in\mathcal{S}_{1}^{d}.
    $$ 
\end{proposition}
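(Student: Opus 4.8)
The plan is to describe $i(A)$ one ray at a time, read off the radial function of $\mathbb{R}^d\setminus i(A)$, and then use that a star body is uniquely determined by its radial function. Fix $A\in\mathcal{S}_{rc}^d$ and, for $\theta\in\mathbb{S}^{d-1}$, let $R_\theta:=\{t\theta:t>0\}$. Since $A$ is a star set, its trace on $R_\theta\cup\{0\}$ is $\{t\theta:0\le t\le\rho_A(\theta)\}$ when $\rho_A(\theta)<\infty$ — and here radial closedness of $A$ is exactly what supplies the endpoint $\rho_A(\theta)\theta$ — while it is all of $R_\theta\cup\{0\}$ when $\rho_A(\theta)=\infty$ (a sequence $\lambda_n\theta\in A$ with $\lambda_n\to\infty$ forces $[0,\lambda_n\theta]\subseteq A$ for every $n$). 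The spherical inversion $i(x)=x/\|x\|^2$ restricts on $R_\theta$ to the order-reversing bijection $t\theta\mapsto(1/t)\theta$, so the trace of $\mathbb{R}^d\setminus i(A)$ on $R_\theta\cup\{0\}$ is the half-open segment $\{t\theta:0\le t<1/\rho_A(\theta)\}$ when $0<\rho_A(\theta)<\infty$, equals $\{0\}$ when $\rho_A(\theta)=\infty$, and equals all of $R_\theta\cup\{0\}$ when $\rho_A(\theta)=0$. In every case $\mathbb{R}^d\setminus i(A)$ is a star set whose radial function in direction $\theta$ is $1/\rho_A(\theta)$ (with the conventions $1/0=\infty$, $1/\infty=0$), i.e.\ it coincides with $\rho_{\Phi(A)}$.

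Since a star body is uniquely determined by its radial function, and $rc(S)$ denotes the star body sharing the radial function of the star set $S$, the computation above gives $rc(\mathbb{R}^d\setminus i(A))=\Phi(A)$, which is the first assertion. I would also point out that, in general, $\mathbb{R}^d\setminus i(A)$ is \emph{not} radially closed: precisely the points $(1/\rho_A(\theta))\theta$ with $0<\rho_A(\theta)<\infty$ are missing. This is why the operator $rc$, rather than the topological closure, must appear here — compare $\Phi([0,u])=\mathbb{R}^d\setminus\{tu:t>1\}$ with $\overline{\mathbb{R}^d\setminus i([0,u])}=\mathbb{R}^d$ noted just above.

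For $X\in\mathcal{S}_1^d$ I would then recover the cleaner statement with topological closure. The map $t\mapsto 1/t$ is a continuous self-homeomorphism of the extended half-line $[0,\infty]$, so $\rho_{\Phi(X)}=1/\rho_X$ is continuous; hence $\Phi(X)\in\mathcal{S}_1^d$, and by Proposition~\ref{prop:car-starb-conti-distance}-(1) the set $\Phi(X)$ is closed. From the first paragraph, $\mathbb{R}^d\setminus i(X)\subseteq rc(\mathbb{R}^d\setminus i(X))=\Phi(X)$, so $\overline{\mathbb{R}^d\setminus i(X)}\subseteq\Phi(X)$. For the reverse inclusion, the only points of $\Phi(X)$ possibly lying outside $\mathbb{R}^d\setminus i(X)$ are the $(1/\rho_X(\theta))\theta$ with $0<\rho_X(\theta)<\infty$, and each such point is the limit, as $t\uparrow 1/\rho_X(\theta)$, of the points $t\theta\in\mathbb{R}^d\setminus i(X)$ produced in the first paragraph. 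Therefore $\Phi(X)\subseteq\overline{\mathbb{R}^d\setminus i(X)}$, and equality holds.

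I expect the only real obstacle to be the ray-by-ray bookkeeping in the first paragraph: one must treat the degenerate directions $\rho_A(\theta)\in\{0,\infty\}$ separately, and be careful that it is exactly the radial closedness of $A$ that turns the trace of $i(A)$ into a closed ray, so that its complement becomes the half-open segment with radial function $1/\rho_A$. Once that is pinned down, the identification $rc(\mathbb{R}^d\setminus i(A))=\Phi(A)$ and the upgrade to topological closure on $\mathcal{S}_1^d$ (via continuity of $\rho_X$ and Proposition~\ref{prop:car-starb-conti-distance}-(1)) are routine.
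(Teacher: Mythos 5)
Your proposal is correct and follows essentially the same route as the paper: both arguments verify that $\mathbb{R}^d\setminus i(A)$ is a star set and compute its radial function direction by direction to be $1/\rho_A$ (treating the cases $\rho_A(\theta)\in(0,\infty)$, $=0$, $=\infty$ separately), and both deduce the $\mathcal{S}_1^d$ statement from the continuity of $1/\rho_X$ together with Proposition \ref{prop:car-starb-conti-distance}-(1). Your explicit two-inclusion verification of $\Phi(X)=\overline{\mathbb{R}^d\setminus i(X)}$ merely spells out a step the paper leaves as immediate.
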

\begin{proof}
Let us notice that for every $A\in\mathcal{S}_{rc}^d$, $\mathbb{R}^d\setminus i(A)$ is a star. Indeed, $0\in\mathbb{R}^d\setminus i(A)$ and, for each  non-zero $x\in\mathbb{R}^d\setminus i(A)$, we have that $\rho_{A}(\theta_x)<\frac{1}{\|x\|}$. Thus, if $\lambda x\notin\mathbb{R}^d\setminus i(A)$  for some $\lambda\in(0,1)$, then $\lambda x\in i(A)$ and $\rho_{A}(\theta_x)\geq\frac{1}{\lambda\|x\|}$ which is a contradiction. Hence $\lambda x\in\mathbb{R}^d\setminus i(A)$ for every $\lambda\in[0,1]$.

By  the previous observation, in order to prove that $\Phi(A)=rc(\mathbb{R}^d\setminus i(A))$, it is enough to show that 
\begin{equation}\label{eq:radial-map-of-a-duality}
    \rho_{\mathbb{R}^d\setminus i(A)}=\frac{1}{\rho_A}\textit{ for every }A\in\mathcal{S}_{rc}^d.
\end{equation}

Let $\theta\in\mathbb{S}^{d-1}$ be such that $\rho_A(\theta)\in(0,\infty)$. Then
$\frac{1}{\rho_A(\theta)}\theta\in i(A)$ and 
$\rho_{\mathbb{R}^d\setminus i(A)}(\theta)\leq\frac{1}{\rho_A(\theta)}$. To prove the reverse inequality, consider an arbitrary $\alpha>\rho_A(\theta)$. Clearly, $\alpha\theta\notin A$ and
$\frac{1}{\alpha}\theta\notin i(A)$. Thus $\rho_{\mathbb{R}^d\setminus i(A)}(\theta)\geq\frac{1}{\alpha}$, and $\rho_{\mathbb{R}^d\setminus i(A)}(\theta)\geq\frac{1}{\rho_A(\theta)}$.  

For $\theta\in\mathbb{S}^{d-1}$ with $\rho_A(\theta)=0$. Let $\alpha>0$ be arbitrary and notice that $\alpha\theta\notin A$ iff
$\frac{1}{\alpha}\theta\notin i(A)$ iff $\rho_{\mathbb{R}^d\setminus i(A)}(\theta)\geq\frac{1}{\alpha}$ iff $\rho_{\mathbb{R}^d\setminus i(A)}(\theta)=\infty$.

For $\theta\in\mathbb{S}^{d-1}$ such that $\rho_A(\theta)=\infty$, we have that $t\theta\in A$ for all $t>0$. Hence, $\frac{1}{t}\theta\in i(A)$ and 
$\rho_{\mathbb{R}^d\setminus i(A)}(\theta)\leq\frac{1}{t}$. Therefore $\rho_{\mathbb{R}^d\setminus i(A)}(\theta)=0$. 
This proves (\ref{eq:radial-map-of-a-duality}). 

To finish the proof, observe that the map $\frac{1}{\rho_X}:\mathbb{S}^{d-1}\rightarrow[0,\infty]$ is continuous for every $X\in\mathcal{S}_{1}^{d}$. Hence, by Proposition \ref{prop:car-starb-conti-distance}-(1), $\Phi(X)$ is closed. Consequently, $\Phi(X)\in\mathcal{S}_{1}^{d}$ and   $\Phi(X)=\overline{\mathbb{R}^d\setminus i(X)}$. 
\end{proof}

Below, we examine the continuity of the star duality with respect to the radial metric, the Wijsman topology and the radial Attouch-Wets topology, respectively.

\begin{remark}\label{rem:dualidad-continua-acotados}
    The duality map $\Phi:\mathcal{S}_{1,(0),b}^d\rightarrow\mathcal{S}_{1,(0),b}^d$ is continuous with respect to the radial metric. 
\end{remark}
\begin{proof}
    Consider any $A\in\mathcal{S}_{1,(0),b}^d$. Let $\varepsilon>0$ and set $r_0=\min_{\theta\in\mathbb{S}^{d-1}}|\rho_A(\theta)|>0$. Then, for every $X\in\mathcal{S}_{1,(0),b}^d$ with $\delta(A,X)<\min\left\{\frac{r_0}{2},\frac{r_{0}^2\varepsilon}{2}\right\}$, we have that
    $$
    \delta\left(\Phi(A),\Phi(X)\right)\leq
    \frac{2\delta\left(A,X\right)}{r_0^2}<\varepsilon.
    $$
    This proves the continuity of $\Phi$.
\end{proof}

\begin{remark} The duality $\Phi:\mathcal{S}_{1,(0),b}^d\rightarrow\mathcal{S}_{1,(0),b}^d$ is discontinuous with respect to the Hausdorff metric $d_H$. 
\end{remark}

To verify the remark, notice that the sequence $(A_n)_n\subseteq\mathcal{S}_{1, (0),b}^d$ from Example \ref{exam:ejemploMoszynska} converges in $d_H$ to $B_2^d$, however 
$d_H\left(\Phi(A_n),B_2^d\right)\geq3$ for every $n$.

\begin{theorem}\label{thm:duality-Wr-continuous}
    The duality $\Phi:\mathcal{S}_{rc}^d\rightarrow\mathcal{S}_{rc}^d$ is continuous with respect to $\tau_{W^r}.$
\end{theorem}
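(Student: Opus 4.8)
The plan is to reduce the statement to the characterization of $\tau_{W^r}$-convergence via pointwise convergence of radial functions (Theorem~\ref{thm:caract-rWijsmanTopology}), and then observe that the inversion $t\mapsto 1/t$ is a self-homeomorphism of the extended half-line $[0,\infty]$. Since $\tau_{W^r}$ is not metrizable (Corollary~\ref{cor:rWijsman-nonmetrizable}), continuity must be verified through nets rather than sequences; recall that a map between topological spaces is continuous precisely when it carries every convergent net to a convergent net with the corresponding limit. So it suffices to fix a net $(A_i)_{i\in I}\subseteq\mathcal{S}_{rc}^d$ with $\tau_{W^r}\text{-}\lim_i A_i=A$ for some $A\in\mathcal{S}_{rc}^d$, and show that $\tau_{W^r}\text{-}\lim_i\Phi(A_i)=\Phi(A)$.

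The first step is to apply Theorem~\ref{thm:caract-rWijsmanTopology}: the hypothesis $\tau_{W^r}\text{-}\lim_i A_i=A$ is equivalent to $\lim_i\rho_{A_i}(\theta)=\rho_A(\theta)$ in $[0,\infty]$ for every $\theta\in\mathbb{S}^{d-1}$. The second step is purely pointwise: by the definition of the star duality in (\ref{eq:duality-Src}), $\rho_{\Phi(B)}=1/\rho_B$ for every $B\in\mathcal{S}_{rc}^d$, and the map $\iota\colon[0,\infty]\to[0,\infty]$, $\iota(t)=1/t$, with the paper's conventions $\frac{1}{0}=\infty$ and $\frac{1}{\infty}=0$, is continuous on all of $[0,\infty]$ (in fact a homeomorphism, being its own inverse). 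Therefore, for each $\theta\in\mathbb{S}^{d-1}$,
$$
\lim_i\rho_{\Phi(A_i)}(\theta)=\lim_i\iota\bigl(\rho_{A_i}(\theta)\bigr)=\iota\bigl(\rho_A(\theta)\bigr)=\rho_{\Phi(A)}(\theta),
$$
so $(\rho_{\Phi(A_i)})_i$ converges pointwise to $\rho_{\Phi(A)}$. The third and final step is to invoke Theorem~\ref{thm:caract-rWijsmanTopology} in the reverse direction to conclude $\tau_{W^r}\text{-}\lim_i\Phi(A_i)=\Phi(A)$, which establishes continuity of $\Phi$.

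I expect no genuine obstacle here; the only points requiring care are the two already flagged: one must argue with nets (the space is not first countable), and one must read the convergence of radial functions in the extended reals $[0,\infty]\cong[0,1]$, so that the relevant continuity of $t\mapsto 1/t$ is really its continuity at the endpoints $0$ and $\infty$, which is exactly what the adopted conventions guarantee. An alternative route would work directly with the defining functionals $d_r(x,\cdot)$ and the formula (P1) of Remark~\ref{rem:properties-d-r}, but passing through radial functions via Theorem~\ref{thm:caract-rWijsmanTopology} is cleaner. As a remark, applying the same argument to the relation $\Phi\circ\Phi=\mathrm{id}$ shows that $\Phi$ is in fact a homeomorphism of $(\mathcal{S}_{rc}^d,\tau_{W^r})$ with $B_2^d$ as its unique fixed point.
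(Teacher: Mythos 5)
Your proposal is correct and follows essentially the same route as the paper: reduce via Theorem~\ref{thm:caract-rWijsmanTopology} to pointwise convergence of radial functions, observe that $\rho_{\Phi(A_i)}(\theta)=1/\rho_{A_i}(\theta)$ converges to $1/\rho_A(\theta)$, and apply the theorem again. The only cosmetic difference is that you package the case analysis (the paper separately treats $\rho_A(\theta)\in(0,\infty)$, $=0$, and $=\infty$) into the single observation that $t\mapsto 1/t$ is a self-homeomorphism of $[0,\infty]$, which is a tidier way of saying the same thing.
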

\begin{proof} 
    Let $A\in\mathcal{S}_{rc}^d$. We shall prove that for every net $(A_i)_{i\in I}\subseteq\mathcal{S}_{rc}^d$ that is $\tau_{W^r}$-convergent to $A$, the net $(\Phi(A_i))_{i\in I}$ converges to $\Phi(A)$ in $\tau_{W^r}$.  By Theorem \ref{thm:caract-rWijsmanTopology}, the convergence in $\tau_{W^r}$ amounts to the pointwise convergence of the radial maps $(\rho_{A_i})_{i}$ to $(\rho_{\Phi(A_i)})_{i}$. Consequently, we only need to prove that for every $\theta\in\mathbb{S}^{d-1}$, the net $\left(\rho_{\Phi(A_i)}(\theta)\right)_{i}$ converges to $\rho_{\Phi(A)}(\theta)$. 
    
    We consider separately the cases when $\rho_{A}(\theta)$ strictly positive, $0$ and $\infty$. For the first case, notice that $\rho_{A}(\theta)\in(0,\infty)$ if and only if $\rho_{\Phi(A)}(\theta)=\frac{1}{\rho_{A}(\theta)}\in(0,\infty)$. Since $(\rho_{A_i}(\theta))_{i}$ converges to $\rho_{A}(\theta)$, there is $i_0\in I$ such that $\rho_{A_i}(\theta)\in(0,\infty)$ for all $i>i_0$. Then, it is a straightforward matter to prove that $\left(\rho_{\Phi(A_i)}(\theta)\right)_i$
    approaches to $\rho_{\Phi(A)}(\theta)$. For the case $\rho_{A}(\theta)=0$ (resp. $\rho_{A}(\theta)=\infty$), observe that 
    $\rho_{\Phi(A)}(\theta)=\infty$ 
    (resp. $\rho_{\Phi(A)}(\theta)=0$). It then follows that the net $\left(\rho_{\Phi(A_i)}(\theta)\right)_i$ converges to $\infty$ (resp. $0$) when $(\rho_{A_i}(\theta))_{i}$ approaches to $0$ (resp. to $\infty$).
\end{proof}

\begin{lemma}\label{lem:Src(0)-is-open}
     Let $A\in\mathcal{S}_{rc}^d$ be such that $\frac{1}{j_0}B_2^d\subseteq A$ for some integer $j_0\geq1$, and let $X\in\mathcal{S}_{rc}^d$ be such that  $d_{AW^r}(X,A)<\frac{1}{2j_0}$. Then $\frac{1}{2j_0}B_2^d\subseteq X$. Particularly, $\mathcal{S}_{rc,(0)}^d$ is an open set in $\left(\mathcal{S}_{rc}^d,d_{AW^r}\right)$.  
\end{lemma}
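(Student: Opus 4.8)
The plan is to transfer everything to the truncated star bodies $X\cap 2j_0B_2^d$ and $A\cap 2j_0B_2^d$ and then compare radial functions. First I would note that $j:=2j_0$ is an integer with $j\geq 2$ and that $\frac{1}{2j_0}=\frac{1}{j}$ lies in the interval $\left(\frac{1}{j+1},\frac{1}{j}\right]$; hence Lemma~\ref{lemm:axu-calcular-rdAW}-(4) applies, and the hypothesis $d_{AW^r}(X,A)<\frac{1}{2j_0}$ is equivalent to
$$
\delta\left(X\cap 2j_0B_2^d,\ A\cap 2j_0B_2^d\right)<\frac{1}{2j_0}.
$$
By the first expression in (\ref{defn:rHaudsdorff}), this means $\left|\rho_{X\cap 2j_0B_2^d}(\theta)-\rho_{A\cap 2j_0B_2^d}(\theta)\right|<\frac{1}{2j_0}$ for every $\theta\in\mathbb{S}^{d-1}$.

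Next I would exploit $\frac{1}{j_0}B_2^d\subseteq A$. Since $j_0\geq 1$ forces $\frac{1}{j_0}\leq 1\leq 2j_0$, truncation at radius $2j_0$ does not destroy this inclusion: $\rho_{A\cap 2j_0B_2^d}(\theta)=\min\{\rho_A(\theta),2j_0\}\geq\frac{1}{j_0}$ for all $\theta$. Combining this with the estimate above gives, for every $\theta\in\mathbb{S}^{d-1}$,
$$
\rho_{X\cap 2j_0B_2^d}(\theta)>\rho_{A\cap 2j_0B_2^d}(\theta)-\frac{1}{2j_0}\geq\frac{1}{j_0}-\frac{1}{2j_0}=\frac{1}{2j_0}.
$$
Since $\rho_X(\theta)\geq\rho_{X\cap 2j_0B_2^d}(\theta)$, we obtain $\rho_X(\theta)>\frac{1}{2j_0}$ for all $\theta$, and because $X$ is a star set this yields $\frac{1}{2j_0}B_2^d\subseteq X$, which is the first claim.

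For the ``particularly'' part, given $A\in\mathcal{S}_{rc,(0)}^d$ I would fix $r>0$ with $rB_2^d\subseteq A$ and an integer $j_0\geq\max\{1,1/r\}$, so that $\frac{1}{j_0}B_2^d\subseteq A$; the first claim then shows that the open $d_{AW^r}$-ball of radius $\frac{1}{2j_0}$ centered at $A$ is contained in $\mathcal{S}_{rc,(0)}^d$, proving that this family is open. I do not foresee a genuine obstacle in this argument; the only points needing a moment's care are that truncating at radius $2j_0$ preserves $\frac{1}{j_0}B_2^d\subseteq A$ (valid since $\frac{1}{j_0}\leq 2j_0$) and that the truncation of $\rho_X$ may be dropped at the end (immediate from $\rho_X\geq\rho_{X\cap 2j_0B_2^d}$).
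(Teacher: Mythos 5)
Your proof is correct and follows essentially the same route as the paper's: both reduce the hypothesis $d_{AW^r}(X,A)<\frac{1}{2j_0}$ to the estimate $\left|\min\{\rho_X(\theta),2j_0\}-\min\{\rho_A(\theta),2j_0\}\right|<\frac{1}{2j_0}$ via Lemma~\ref{lemm:axu-calcular-rdAW} (the paper cites part (3), you part (4) — interchangeable here) and then bound $\rho_X$ below by $\frac{1}{2j_0}$. Your explicit checks that truncation at radius $2j_0$ preserves the inclusion $\frac{1}{j_0}B_2^d\subseteq A$ and that the truncation of $\rho_X$ can be dropped are correct and match what the paper leaves implicit.
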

\begin{proof}
    Since $A\in\mathcal{S}_{rc}^d$ is such that $\inf_{\theta\in\mathbb{S}^{d-1}}\rho_A(\theta)\geq\frac{1}{j_0}$ and $d_{AW^r}(A,X)<\frac{1}{2j_0}$, then,
    by Lemma \ref{lemm:axu-calcular-rdAW}-(3), we have that for every $\theta\in\mathbb{S}^{d-1}$, 
    $$
    \left|\min{\{\rho_X(\theta),2j_0\}}-\min\{\rho_A(\theta),2j_0\}\right|<\frac{1}{2j_0}.
    $$
    Hence,  $\min{\{\rho_X(\theta),2j_0\}}>\frac{1}{2j_0}$ and
    $\rho_X(\theta)>\frac{1}{2j_0}$ for all $\theta\in\mathbb{S}^{d-1}$. Therefore, $\frac{1}{2j_0}B_2^d\subseteq X$ as required.
\end{proof}

\begin{theorem}\label{thm:duality-awr-continua}
    The duality $\Phi:\mathcal{S}_{rc}^d\rightarrow\mathcal{S}_{rc}^d$ is continuous with respect to $d_{AW^r}$.
\end{theorem}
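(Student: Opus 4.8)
The plan is to use the metrizability of $(\mathcal{S}_{rc}^d,d_{AW^r})$ established in Theorem~\ref{thm:rAW-complete-metrizable}, so that continuity of $\Phi$ is equivalent to sequential continuity. I would therefore fix a sequence $(A_n)_n\subseteq\mathcal{S}_{rc}^d$ with $d_{AW^r}\text{-}\lim_{n}A_n=A$ and show that $d_{AW^r}\text{-}\lim_{n}\Phi(A_n)=\Phi(A)$. By Lemma~\ref{lemm:axu-calcular-rdAW}-(3), $d_{AW^r}(\Phi(A_n),\Phi(A))=\sup_{j\in\mathbb{N}}\min\{\frac{1}{j},\,\delta(\Phi(A_n)\cap jB_2^d,\Phi(A)\cap jB_2^d)\}$, and given $\varepsilon>0$ one chooses $j_0\in\mathbb{N}$ with $1/j_0<\varepsilon$, so that every term of index $j\geq j_0$ is already $<\varepsilon$; the claim therefore reduces to proving, for each \emph{fixed} $j\in\mathbb{N}$, that $\delta(\Phi(A_n)\cap jB_2^d,\Phi(A)\cap jB_2^d)\to0$ as $n\to\infty$, and then to assembling the finitely many indices $j<j_0$.

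The computational core is the elementary identity $\rho_{\Phi(B)\cap jB_2^d}(\theta)=\min\{1/\rho_B(\theta),j\}=1/\max\{\rho_B(\theta),1/j\}$, valid for every $B\in\mathcal{S}_{rc}^d$ and every $\theta\in\mathbb{S}^{d-1}$ under the conventions $1/0=\infty$ and $1/\infty=0$; it is checked by splitting on $\rho_B(\theta)\in\{0\}$, $(0,\infty)$, $\{\infty\}$. With it, $\delta(\Phi(A_n)\cap jB_2^d,\Phi(A)\cap jB_2^d)=\sup_{\theta\in\mathbb{S}^{d-1}}\bigl|1/\max\{\rho_{A_n}(\theta),1/j\}-1/\max\{\rho_A(\theta),1/j\}\bigr|$, so the task becomes to dominate this supremum using control on $\delta(A_n\cap kB_2^d,A\cap kB_2^d)$ for a suitably large truncation level $k$ — a quantity which, again by Lemma~\ref{lemm:axu-calcular-rdAW}-(3), tends to $0$ as $n\to\infty$ because $d_{AW^r}(A_n,A)\to0$.

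Concretely, given $\varepsilon\in(0,1)$ I would fix an integer $k>\max\{j,\,2+1/\varepsilon\}$, then $\eta\in\bigl(0,\min\{\varepsilon/j^2,1/k\}\bigr)$, and then $n$ large enough that $\sup_{\theta}\bigl|\min\{\rho_{A_n}(\theta),k\}-\min\{\rho_A(\theta),k\}\bigr|<\eta$. Fixing a direction $\theta$ and writing $a=\rho_A(\theta)$, $a_n=\rho_{A_n}(\theta)$, I would distinguish two cases. If $a\leq k-1$, then $\min\{a,k\}=a$ forces $a_n<a+\eta<k$, hence $a_n=\min\{a_n,k\}$ and $|a_n-a|<\eta$; since $t\mapsto\max\{t,1/j\}$ is $1$-Lipschitz and bounded below by $1/j$, this gives $\bigl|1/\max\{a_n,1/j\}-1/\max\{a,1/j\}\bigr|\leq j^2|a_n-a|<j^2\eta<\varepsilon$. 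If $a>k-1$, then $\min\{a,k\}>k-1$ forces $\min\{a_n,k\}>k-1-\eta>k-2$, so $\max\{a,1/j\}>k-1$ and $\max\{a_n,1/j\}>k-2$; hence both reciprocals lie in $[0,1/(k-2))$ and differ by less than $1/(k-2)<\varepsilon$. In either case the pointwise difference is $<\varepsilon$ uniformly in $\theta$, so $\delta(\Phi(A_n)\cap jB_2^d,\Phi(A)\cap jB_2^d)\leq\varepsilon$ for all large $n$; letting $\varepsilon\downarrow0$ settles the fixed-$j$ statement, which completes the argument.

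I expect the only genuine obstacle to be the bookkeeping forced by the discontinuity of $t\mapsto1/t$ at $0$ and $\infty$: the domain-side truncation level $k$ must be chosen strictly larger than the range-side level $j$ and large in terms of $\varepsilon$, precisely so that the "large radial value" regime $a>k-1$ is absorbed by the crude bound $1/(k-2)$, while on its complement the quotient estimate (which costs a factor $j^2$) is available. Getting the dependence of $k$ and $\eta$ on $\varepsilon$, and of $n$ on $k$ and $\eta$, in the correct order is the single delicate point; no individual step requires a genuinely hard computation.
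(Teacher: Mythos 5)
Your proposal is correct; I checked the parameter bookkeeping and it closes. The reduction to fixed $j$ is legitimate (for $j\geq j_0$ with $1/j_0<\varepsilon$ the terms $\min\{1/j,\cdot\}$ are automatically small, and the finitely many remaining indices are handled by the fixed-$j$ limit), the identity $\min\{1/t,j\}=1/\max\{t,1/j\}$ holds under the paper's conventions $1/0=\infty$, $1/\infty=0$, and your two cases $\rho_A(\theta)\leq k-1$ versus $\rho_A(\theta)>k-1$ are exhaustive and each yields the bound $\varepsilon$ (the first via the $j^2$-Lipschitz estimate for $t\mapsto 1/\max\{t,1/j\}$ on $[0,\infty]$, the second via the crude bound $1/(k-2)<\varepsilon$, which also absorbs $\rho_{A_n}(\theta)=\infty$).

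The route is genuinely different in organization from the paper's. The paper fixes $\varepsilon\in\left(\frac{1}{j+1},\frac{1}{j}\right]$, exhibits an explicit modulus $\eta<\min\left\{\frac{1}{2j+3},\frac{\varepsilon}{j^2+j\varepsilon}\right\}$, and verifies $\delta(\Phi(A)\cap jB_2^d,\Phi(X)\cap jB_2^d)<\varepsilon$ directly via Lemma \ref{lemm:axu-calcular-rdAW}-(4), partitioning $\mathbb{S}^{d-1}$ into four regions according to whether $\rho_A(\theta)$ lies below $1/j$, in $(1/j,j]$, in $(j,2j+3)$, or above $2j+3$, with sub-cases on $\rho_X(\theta)$ in each region, and moreover treats separately the structural cases where $A$ is bounded, contains the origin in its interior, or neither. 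Your identity $\rho_{\Phi(B)\cap jB_2^d}=1/\max\{\rho_B,1/j\}$ converts the range-side truncation at level $j$ into a domain-side truncation at level $1/j$, which collapses the paper's four regions and multiple sub-cases into two, and eliminates the structural trichotomy on $A$ entirely. What you lose is only the explicit single-level modulus of continuity in terms of $\varepsilon$ and $j$ (your argument passes through an auxiliary truncation level $k$ depending on $\varepsilon$, so the resulting $\eta$ is less directly tied to the target index $j$); what you gain is a substantially shorter and more uniform argument. One cosmetic remark: since $\delta(\Phi(A_n)\cap jB_2^d,\Phi(A)\cap jB_2^d)$ is nondecreasing in $j$, controlling the single index $j=j_0-1$ already handles all smaller indices, so even the ``assembly over finitely many $j$'' step can be compressed.
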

\begin{proof}
    We will consider three separate cases: (1) $A\in\mathcal{S}_{rc}^d$ is bounded; (2) $A\in\mathcal{S}_{rc}^d$ contains the origin in the interior; and (3) $A\in\mathcal{S}_{rc}^d$ satisfies $\inf_{\theta\in\mathbb{S}^{d-1}}\rho_{A}(\theta)=0$ and
    $\sup_{\theta\in\mathbb{S}^{d-1}}\rho_{A}(\theta)=\infty$.
    
    We begin by proving the continuity of $\Phi$ for the third case. The remaining cases will follow from minor modifications in the proof of this case. 
    
    Let $\varepsilon\in\left(\frac{1}{j+1},\frac{1}{j}\right]$ with $j\geq2$ an integer, and 
    consider  $A\in\mathcal{S}_{rc}^d$ as in (3) of this proof.  We shall prove that if
    $$
    0<\eta<\min\left\{\frac{1}{2j+3},\frac{\varepsilon}{j^2+j\varepsilon} \right\},
    $$ 
    then, for every $X\in\mathcal{S}_{rc}^d$ with 
    $d_{AW^r}(A,X)<\eta$, 
    $d_{AW^r}(\Phi(X),\Phi(A))<\varepsilon$. To this end, we consider the following regions of $\mathbb{S}^{d-1}$:
    \begin{table}[h]
    \begin{tabular}{ll}
          $\mathcal{R}_1:=\left\{\theta\in\mathbb{S}^{d-1}:\rho_A(\theta)\leq\frac{1}{j} 
          \right\}$ & $\mathcal{R}_3:=\left\{\theta\in\mathbb{S}^{d-1}:j<\rho_A(\theta)<2j+3
          \right\}$ \\
          $\mathcal{R}_2:=\left\{\theta\in\mathbb{S}^{d-1}:\frac{1}{j}<\rho_A(\theta)\leq j 
          \right\}$ & $\mathcal{R}_4:=\left\{\theta\in\mathbb{S}^{d-1}:\rho_A(\theta)\geq 2j+3 
          \right\}$
    \end{tabular}
    \end{table}\\
    Observe that $\mathcal{R}_1$ and $\mathcal{R}_4$ are always non-empty, while $\mathcal{R}_2$ and $\mathcal{R}_3$ may be empty, depending on $A$. 
    
    Consider $\theta\in\mathcal{R}_1$. Since $d_{AW^r}(A,X)<\eta$, then by Lemma \ref{lemm:axu-calcular-rdAW}-(3), we have that  $\delta(A\cap(2j+3)B^d_2,X\cap(2j+3)B^d_2)<\eta$. Hence, 
    $$
    \rho_A(\theta)-\eta<\min\left\{\rho_{X}(\theta),2j+3\right\}<\rho_A(\theta)+\eta<\frac{1}{j}+\frac{1}{2j+3}<j,
    $$
    which yields to $\rho_{X}(\theta)<\frac{1}{j}+\eta$. Consequently, $\rho_{\Phi(X)}(\theta)=\infty$  or $\rho_{\Phi(X)}(\theta)>\frac{j}{1+j\eta}$. For the latter, notice that we actually have $\rho_{\Phi(X)}(\theta)>\frac{j}{1+j\eta}>j-\varepsilon$
    because of
    $\eta<\frac{\varepsilon}{j(j+\varepsilon)}<\frac{\varepsilon}{j(j-\varepsilon)}$. Particularly, for every $\theta\in\mathcal{R}_1$, we  have that
    $$
    \min\left\{\rho_{\Phi(X)}(\theta), j\right\}
    >\frac{j}{1+j\eta}>j-\varepsilon.
    $$
    Since $\min\left\{\rho_{\Phi(A)}(\theta),j\right\}=j$ on $\mathcal{R}_1$, then
    \begin{align}\label{eq:Region1}
           \sup_{\theta\in\mathcal{R}_1}\left|\min\left\{\rho_{\Phi(A)}(\theta),j\right\}-\min\left\{\rho_{\Phi(X)}(\theta), j\right\}\right|
           \leq j-\frac{j}{1+j\eta}
           <\varepsilon.
    \end{align}
    Now, suppose that $\mathcal{R}_2\neq\emptyset$ and let $\theta\in\mathcal{R}_2$ be arbitrary. First, let us suppose that $\rho_X(\theta)\leq\frac{1}{j}$. Then, by Lemma \ref{lemm:axu-calcular-rdAW}-(3), we know that
    \begin{equation}\label{eq:auxprovecontPhi1}
         \left|\rho_{A}(\theta)-\rho_{X}(\theta)\right|=
    \left|\min\left\{\rho_{A}(\theta),2j+3\right\}-\min\left\{\rho_{X}(\theta), 2j+3\right\}\right|<\eta.
    \end{equation}
    Particularly, $\rho_X(\theta)>\rho_A(\theta)-\eta>\frac{1}{j}-\eta>0$. Hence $j\leq\rho_{\Phi(X)}(\theta)<\frac{j}{1-j\eta}$, and
    \begin{align*}
        \left|\min\left\{\rho_{\Phi(A)}(\theta),j\right\}-\min\left\{\rho_{\Phi(X)}(\theta), j\right\}\right|&\leq\left|\rho_{\Phi(A)}(\theta)-\rho_{\Phi(X)}(\theta)\right|\\
        &\leq\eta\rho_{\Phi(A)}(\theta)\rho_{\Phi(X)}(\theta)\\
        &<\frac{j^2\eta }{1-j\eta}.
    \end{align*}
    Since $\eta<\frac{\varepsilon}{j^2+j\varepsilon}$,
    then,
    for every $\theta\in\mathcal{R}_2$ such that $\rho_X(\theta)\leq\frac{1}{j}$, we have that
    $$
    \left|\min\left\{\rho_{\Phi(A)}(\theta),j\right\}-\min\left\{\rho_{\Phi(X)}(\theta), j\right\}\right|<\frac{j^2\eta }{1-j\eta}<\varepsilon.
    $$
    Now, let us suppose that $\theta\in\mathcal{R}_2$ is such that $\frac{1}{j}<\rho_X(\theta)\leq j$. In this case, by Lemma \ref{lemm:axu-calcular-rdAW}-(3), the inequality in (\ref{eq:auxprovecontPhi1}) also holds. 
    Thus,
    \begin{align*}
        \left|\min\left\{\rho_{\Phi(A)}(\theta),j\right\}-\min\left\{\rho_{\Phi(X)}(\theta), j\right\}\right|&=
        \left|\rho_{\Phi(A)}(\theta)-\rho_{\Phi(X)}(\theta) \right|\\
        &<j^2\left|\rho_{A}(\theta)-\rho_{X}(\theta) \right|\\
        &<j^2\eta\\
        &<\varepsilon.
    \end{align*}

    Now, suppose that $\theta\in\mathcal{R}_2$ is such that $\rho_X(\theta)>j$. Notice that $\rho_X(\theta)$ must be finite. Otherwise $\rho_X(\theta)=\infty$ and, from Lemma \ref{lemm:axu-calcular-rdAW}-(3), it follows that
    $$
    \left|\rho_A(\theta)-(2j+3)\right|=\left|\min\left\{\rho_{A}(\theta),(2j+3)\right\}-\min\left\{\rho_{X}(\theta), (2j+3)\right\}\right|<\eta.
    $$
    Hence $\rho_{A}(\theta)>2j+3-\eta>2j+2$, which is a contradiction. Thus  $j<\rho_X(\theta)<\infty$ and, since $\frac{1}{j}<\rho_A(\theta)\leq j$, we have:
    \begin{align*}
        \left|\min\left\{\rho_{\Phi(A)}(\theta),j\right\}-\min\left\{\rho_{\Phi(X)}(\theta), j\right\}\right|&\leq
        \left|\rho_{\Phi(A)}(\theta)-\rho_{\Phi(X)}(\theta) \right|\\
        &<\left|\rho_{A}(\theta)-\rho_{X}(\theta) \right|\\
        &<\eta<\varepsilon,
    \end{align*}
    for every $\theta\in\mathcal{R}_2$ such that $\rho_X(\theta)>j$.  By putting together the previous inequalities, we have just proved that if $\mathcal{R}_2\neq\emptyset$, then
    \begin{equation}\label{eq:Region2}
        \sup_{\theta\in\mathcal{R}_2}\left|\min\left\{\rho_{\Phi(A)}(\theta),j\right\}-\min\left\{\rho_{\Phi(X)}(\theta), j\right\}\right|\leq\max\left\{\frac{j^2\eta }{1-j\eta}, j^2\eta, \eta\right\}<\varepsilon.
    \end{equation}
    Next, assume that $\mathcal{R}_3\neq\emptyset$ and let $\theta\in \mathcal{R}_3$ be arbitrary. If $\theta\in\mathcal{R}_3$ is such that $\rho_X(\theta)\leq j$, then by Lemma \ref{lemm:axu-calcular-rdAW}-(3), the inequality in (\ref{eq:auxprovecontPhi1}) holds. Hence $\left|\rho_{A}(\theta)-\rho_{X}(\theta)\right|<\eta$ and $\rho_X(\theta)>j-\eta$. Thus $\frac{1}{j}\leq\rho_{\Phi(X)}(\theta)<\frac{1}{j-\eta}<j$, and
    \begin{align*}
        \left|\min\left\{\rho_{\Phi(A)}(\theta),j\right\}-\min\left\{\rho_{\Phi(X)}(\theta), j\right\}\right|&=
        \left|\rho_{\Phi(A)}(\theta)-\rho_{\Phi(X)}(\theta) \right|\\
        &<\frac{1}{j(j-\eta)}\left|\rho_{A}(\theta)-\rho_{X}(\theta) \right|\\
        &<\frac{\eta}{j(j-\eta)}\\
        &<\varepsilon.
    \end{align*}
    The last inequality follows from the fact that $\eta<\frac{\varepsilon}{j^2+j\varepsilon}<\frac{j^2\varepsilon}{1+j\varepsilon}$. 

    For $\theta\in\mathcal{R}_3$ such that $j<\rho_X(\theta)<2j+3$, observe that the inequality (\ref{eq:auxprovecontPhi1}) holds again. Therefore,
    \begin{align*}
        \left|\min\left\{\rho_{\Phi(A)}(\theta),j\right\}-\min\left\{\rho_{\Phi(X)}(\theta), j\right\}\right|&=
        \left|\rho_{\Phi(A)}(\theta)-\rho_{\Phi(X)}(\theta) \right|\\
        &<\frac{1}{j^2}\left|\rho_{A}(\theta)-\rho_{X}(\theta) \right|\\
        &<\frac{\eta}{j^2}<\frac{\varepsilon}{(1+j\varepsilon)}.
    \end{align*}
    
    Finally, if $\theta\in\mathcal{R}_3$ and $\rho_{X}(\theta)\geq2j+3$, then again by Lemma \ref{lemm:axu-calcular-rdAW}-(3), we have that:
    $$
        \left|\rho_{A}(\theta)-(2j+3)\right|=\left|\min\left\{\rho_{A}(\theta),2j+3\right\}-\min\left\{\rho_{X}(\theta), (2j+3\right\}\right|<\eta
    $$
    Thus $\rho_{A}(\theta)>2j+3-\eta>2j+2$, which yields to:
    \begin{align*}
        \left|\min\left\{\rho_{\Phi(A)}(\theta),j\right\}-\min\left\{\rho_{\Phi(X)}(\theta), j\right\}\right|&=
        \left|\rho_{\Phi(A)}(\theta)-\rho_{\Phi(X)}(\theta) \right|\\
        &<\frac{1}{2j+2}+\frac{1}{2j+3}\\
        &<\frac{1}{j+1}.
    \end{align*}
    By putting together the previous inequalities, we have shown that if $\mathcal{R}_3\neq\emptyset$, then
    \begin{equation}\label{eq:Region3}
        \sup_{\theta\in\mathcal{R}_3}\left|\min\left\{\rho_{\Phi(A)}(\theta),j\right\}-\min\left\{\rho_{\Phi(X)}(\theta), j\right\}\right|\leq\max\left\{\frac{\eta}{j(j-\eta)}, \frac{\eta}{j^2}, \frac{1}{j+1}\right\}
        <\varepsilon.
    \end{equation}    
    For the last region $\mathcal{R}_4$. Notice that from Lemma \ref{lemm:axu-calcular-rdAW}-(3), it follows that:
    $$
    \min\left\{\rho_X(\theta),2j+3\right\}>\min\left\{\rho_A(\theta),2j+3\right\}-\eta=2j+3-\eta>2j+2. 
    $$
    Hence $\rho_X(\theta)>2j+2$ and,
     \begin{align*}
        \left|\min\left\{\rho_{\Phi(A)}(\theta),j\right\}-\min\left\{\rho_{\Phi(X)}(\theta), j\right\}\right|=
        \left|\rho_{\Phi(A)}(\theta)-\rho_{\Phi(X)}(\theta) \right|
        <\frac{1}{j+1}.
    \end{align*}
    Consequently,
    \begin{equation}\label{eq:Region4}
        \sup_{\theta\in\mathcal{R}_4}\left|\min\left\{\rho_{\Phi(A)}(\theta),j\right\}-\min\left\{\rho_{\Phi(X)}(\theta), j\right\}\right|\leq \frac{1}{j+1}
        <\varepsilon.
    \end{equation}
    From Lemma \ref{lemm:axu-calcular-rdAW}-(4) and inequalities  (\ref{eq:Region1}), (\ref{eq:Region2}), (\ref{eq:Region3}) and (\ref{eq:Region4}), we have that  $\delta\left(\Phi(A)\cap jB_2^n,\Phi(X)\cap jB_2^n\right)<\varepsilon$ which is equivalent to 
    $$d_{AW^r}(\Phi(A),\Phi(X))<\varepsilon.$$  
    This proves the continuity of the duality at each $A\in\mathcal{S}_{rc}^d$ as in (3) of this proof.

    To prove continuity of $\Phi$ at each bounded $A\in\mathcal{S}_{rc}^d$. Let $j_A\geq1$ be the smallest integer such that $A\subseteq j_AB_2^d$ and consider  $\varepsilon_1\in\left(\frac{1}{j+1},\frac{1}{j}\right]$, for some integer $j$ such that $j>j_A$. Observe that if we take 
    $$
    0<\eta_1<\min\left\{\frac{1}{2j+3},\frac{\varepsilon_1}{j^2+j\varepsilon_1} \right\},
    $$ 
    then, by (\ref{eq:Src-bouded})), for each $X\in\mathcal{S}_{rc}^d$ with 
    $d_{AW^r}(A,X)<\eta_1$, $X\subseteq2j_AB_2^d$. Hence,
    $\mathcal{R}_4$ and the corresponding subset 
    $$
    \left\{\theta\in\mathbb{S}^{d-1}:\rho_X(\theta)\geq 2j+3 \right\}
    $$ 
    must be empty. Thus, we can proceed as in the proof of the previous case to show that $\delta\left(\Phi(A)\cap jB_2^2,\Phi(X)\cap jB_2^2\right)<\varepsilon_1$. Consequently $d_{AW^r}(\Phi(A),\Phi(X))<\varepsilon_1$, and $\Phi$ is continuous at each $A\in\mathcal{S}_{rc}^d$ bounded.
    

    In order to prove continuity at each $A\in\mathcal{S}_{rc, (0)}^d$. Let $j'_A\geq1$ be the smallest integer such that $\frac{1}{j'_A}B_2^d\subseteq A$, and consider $\varepsilon_2\in\left(\frac{1}{j+1},\frac{1}{j}\right]$ for some integer $j>j'_A$. Notice that by the Lemma \ref{lem:Src(0)-is-open}, for every $X\in\mathcal{S}_{rc}^d$ such that $d_{AW^r}(A,X)<\frac{1}{2j'_A}$, we have that $\frac{1}{2j'_A}B_2^d\subseteq X.$ Particularly, if we choose 
    $$
    0<\eta_2<\min\left\{\frac{1}{2j+3},\frac{\varepsilon_2}{j^2+j\varepsilon_2} \right\},
    $$ 
    and any $X\in\mathcal{S}_{rc}^d$ with $d_{AW^r}(A,X)<\eta_2$, then $\frac{1}{2j'_A}B_2^d\subseteq X$. Furthermore, the regions $\mathcal{R}_1$ and $\left\{\theta\in\mathbb{S}^{d-1}:\rho_X(\theta)\leq\frac{1}{j}\right\}$ are empty. Now, we can proceed as in the proof of the first case to prove that 
    $$
    \delta\left(\Phi(A)\cap jB_2^2,\Phi(X)\cap jB_2^2\right)<\varepsilon_2.
    $$ 
    Hence,
    $d_{AW^r}(\Phi(A),\Phi(X))<\varepsilon_2$
    which shows continuity at any $A\in\mathcal{S}_{rc(0)}^d$.  This finishes the proof.
\end{proof}

\begin{corollary}\label{cor:duality-Phi-cont-S1}
The star duality  $\Phi:\left(\mathcal{S}_1^d,d_{AW^r}\right)\rightarrow\left(\mathcal{S}_1^d,d_{AW^r}\right)$ is continuous.

\end{corollary}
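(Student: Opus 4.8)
The plan is to obtain the corollary as an immediate consequence of Theorem \ref{thm:duality-awr-continua}, Proposition \ref{prop:duality-S1}, and the remarks following Definition \ref{def:radialAW}. First I would invoke Proposition \ref{prop:duality-S1}, which asserts that $\Phi(X)\in\mathcal{S}_1^d$ whenever $X\in\mathcal{S}_1^d$; thus $\Phi$ genuinely restricts to a self-map $\Phi|_{\mathcal{S}_1^d}:\mathcal{S}_1^d\to\mathcal{S}_1^d$. Next I would recall, as observed just after Definition \ref{def:radialAW}, that the radial Attouch-Wets topology on $\mathcal{S}_1^d$ coincides with the subspace topology that $\mathcal{S}_1^d$ inherits from $(\mathcal{S}_{rc}^d,\tau_{AW^r})$; by Theorem \ref{thm:rAW-complete-metrizable} this is the topology induced by the metric $d_{AW^r}$ restricted to $\mathcal{S}_1^d\times\mathcal{S}_1^d$.

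The remaining argument is purely topological. Theorem \ref{thm:duality-awr-continua} gives that $\Phi:(\mathcal{S}_{rc}^d,d_{AW^r})\to(\mathcal{S}_{rc}^d,d_{AW^r})$ is continuous. The restriction of a continuous map to a subspace of its domain, corestricted to any subspace of its codomain containing the image, is again continuous; applying this with the invariant subspace $\mathcal{S}_1^d\subseteq\mathcal{S}_{rc}^d$ (invariance being exactly Proposition \ref{prop:duality-S1}) yields continuity of $\Phi|_{\mathcal{S}_1^d}:\mathcal{S}_1^d\to\mathcal{S}_1^d$. Concretely, given $X\in\mathcal{S}_1^d$ and $\varepsilon>0$, the threshold $\eta>0$ produced in the proof of Theorem \ref{thm:duality-awr-continua} works verbatim here, since for $X'\in\mathcal{S}_1^d$ the quantity $d_{AW^r}(X,X')$ is computed by the same formula (\ref{defn:rd-AW}).

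I expect no real obstacle: the only point requiring care is the invariance $\Phi(\mathcal{S}_1^d)\subseteq\mathcal{S}_1^d$, and this is precisely Proposition \ref{prop:duality-S1}. As a closing remark one may add that, by Proposition \ref{prop:subspaces-rdAW}-(1), $(\mathcal{S}_1^d,d_{AW^r})$ is itself a complete metric space, so the corollary exhibits $\Phi$ as a continuous involution of a complete metric space whose unique fixed point is $B_2^d$.
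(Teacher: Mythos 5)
Your proposal is correct and is exactly the argument the paper intends: the corollary is stated without proof as an immediate consequence of Theorem \ref{thm:duality-awr-continua} (continuity of $\Phi$ on $(\mathcal{S}_{rc}^d,d_{AW^r})$) together with the invariance $\Phi(\mathcal{S}_1^d)\subseteq\mathcal{S}_1^d$ from Proposition \ref{prop:duality-S1}, and restricting a continuous map to an invariant subspace preserves continuity.
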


\subsection{Flowers of convex sets}
For every $x\in\mathbb{R}^d$, let $B_x$ denote the closed ball having as a diameter the segment $[0,x]$. That is, 
$$
B_x:=\left\{y\in\mathbb{R}^n:\left\Vert y-\frac{x}{2}\right\Vert\leq\frac{\|x\|}{2}\right\}.
$$
In \cite[Definition 9.3]{MMilmanRotem}, the family of flowers is introduced. A star body $F\subseteq\mathbb{R}^d$ is called a \textit{flower} if $F=rc\left({\bigcup}_{x\in C}B_x\right)$, for some non-empty $C\subseteq\mathbb{R}^d$ closed. The family of flowers in $\mathbb{R}^d$ is denoted by $\mathcal{F}^d$. 

Given a convex set $K\in\mathcal{K}_0^d$, \textit{ the flower of} $K$, $K^\clubsuit$, is the star body with radial function $\rho_{K^\clubsuit}=h_K$, where $h_K$ denotes the \textit{support function} of $K\in\mathcal{K}_0^d$.
By way of example, for every $x\in\mathbb{R}^d$, $[0,x]^\clubsuit=B_x$. In \cite[Theorem 9.4]{MMilmanRotem}, it is proved that, for every $K\in\mathcal{K}_0^d$, $K^\clubsuit\in\mathcal{F}^d$  and the map 
\begin{align*}
    \clubsuit:&\mathcal{K}_0^d\rightarrow\mathcal{F}^d\\
    &K\rightarrow K^\clubsuit
\end{align*}
is a bijection. Notice that if $K\in\mathcal{K}_{0,b}^d$, then $K^\clubsuit$ is always compact. However, in general, flowers may be non-closed star bodies. Indeed, for each $u\in\mathbb{S}^{d-1}$, let $R_u$ denote the ray $\{tu:t\geq0\}$ and let $H^+_u$ denote the open halfspace $\{x\in\mathbb{R}^d:\langle x,u\rangle>0\}$. Then, the flower $(R_u)^\clubsuit=H_u^+\cup\{0\}$  is not closed. 

Recall that \textit{the polar duality} $\circ:\mathcal{K}_0^d\rightarrow\mathcal{K}_0^d$ is the map sending each $K\in\mathcal{K}_0^d$, to its polar set $K^\circ:=\left\{y\in\mathbb{R}^d:\langle y,x\rangle\leq1,\textit{ for all }x\in K\right\}$. We refer to \cite{ArtsteinMilman2007,ZooDualities,NataliaLuisa2, Schneider2014,Slomka2011} for the basic properties and recent works regarding this duality.

A fundamental relation between flowers of convex sets, the polar duality, and the duality $\Phi$ is established in \cite[Theorem 9.12]{MMilmanRotem}. There it is proved that $\circ$
can be written as the composition
\begin{equation}\label{eq:conmut-diagrama}
\circ:\mathcal{K}_0^d\overset{\clubsuit}{\longrightarrow}\mathcal{F}^d\overset{\Phi}{\longrightarrow}
\mathcal{K}_0^d.
\end{equation}
That is, $K^\circ=\Phi(K^\clubsuit)$ for every $K\in\mathcal{K}_0^d$.
 
In the specific cases of compact flowers and convex bodies, we can further explore the properties of the decomposition (\ref{eq:conmut-diagrama}). To do so, let us denote by $\mathcal{F}_{b}^d$
the family of bounded flowers and, by $\mathcal{F}_{(0),b}^d$,
the family of bounded flowers that contain the origin in their interior. Then the following holds,
\begin{remark} \label{rem:isometria-flores}The flower map $\clubsuit_{|}:\left(\mathcal{K}_{0,b}^d,d_H\right)\rightarrow\left(\mathcal{F}_{b}^d,\delta\right)$ is an isometric isomorphism.
\end{remark}
\begin{proof}
    By \cite[Theorem 9.4]{MMilmanRotem}, the map $\clubsuit:\mathcal{K}_{0}^d\rightarrow\mathcal{F}^d$ is a bijection. Particularly, $\clubsuit\left(\mathcal{K}_{0,b}^d\right)=\mathcal{F}_b^d$ and, for every $A,K\in\mathcal{K}_{0,b}^d$, we have that
    \begin{align*}
        d_H(A,K)=\left\| h_A-h_K\right\|_\infty=
        \left\| \rho_{A^\clubsuit}-\rho_{K^\clubsuit}\right\|_\infty=\delta(A^\clubsuit,K^\clubsuit),
    \end{align*}
    which proves the result.
\end{proof}

By combining decomposition (\ref{eq:conmut-diagrama}) with Remarks \ref{rem:dualidad-continua-acotados}, \ref{rem:d_AWr de convexos}-(3), and \ref{rem:isometria-flores}, along with the continuity of the polar duality on $(\mathcal{K}_{(0),b}^d, d_H)$, we conclude that the following diagram commutes,
\begin{center}
\begin{tikzpicture} 
\node at (-2.5, 0){$\left(\mathcal{K}_{(0),b}^d,d_H\right)$}; \node at (2.5, 0){$\left(\mathcal{K}_{(0),b}^d,d_H\right)$}; \draw[->, thick] (-1.2,0) -- node[above] {$\circ$} ++(2.4,0); \node at (0, -1.85) {$\left(\mathcal{F}_{(0),b}^d,\delta\right)$};\draw[->, thick] (-1.3,-0.3) -- node[below, pos=0.4] {$\clubsuit$} ++(1.1,-1); \draw[->, thick] (0.2,-1.3) -- node[below, pos=0.6, xshift=0.8cm] {$\Phi_{|{\mathcal{F}_{(0),b}^d}}=\hat{\Phi}$} ++(1.1,1.1); 
\draw [thick, right hook ->](0,-2.4) -- node[below, yshift=0.3cm, xshift=-0.2cm]{$\iota$} (0, -3.5) ; \node at (0, -3.9){$\left(\mathcal{S}_{(0),b}^d,\delta\right)$}; 
\end{tikzpicture}
\end{center}
Here 
$\iota:\mathcal{F}_{(0),b}^d\hookrightarrow\mathcal{S}_{(0),b}^d$ denotes the inclusion map. Consequently, in the compact case, the decomposition (\ref{eq:conmut-diagrama}) remains valid when the usual metric topologies are considered. 


As we will see below, analyzing the properties of decomposition (\ref{eq:conmut-diagrama}) becomes more intricate when the metric topologies $d_{AW}$ and $d_{AW^r}$ on  $\mathcal{K}_0^d$ and $\mathcal{S}_{rc}^d$, respectively, are considered.



\begin{proposition}\label{prop:flore-discontinuous-dAWr}\begin{enumerate}
    \item The spaces $\left(\mathcal{K}_{0}^d,d_{AW^r}\right)$ and $\left(\mathcal{F}^d,d_{AW^r}\right)$ are homeomorphic.
    \item The flower map $\clubsuit:\left(\mathcal{K}_{0}^d,d_{AW}\right)\rightarrow\left(\mathcal{F}^d,d_{AW^r}\right)$ is not continuous.
\end{enumerate}
     
\end{proposition}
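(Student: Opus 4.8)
\emph{Plan.} For part (1), the idea is that the homeomorphism is the duality $\Phi$ itself. By Theorem~\ref{thm:duality-awr-continua}, $\Phi:(\mathcal{S}_{rc}^d,d_{AW^r})\rightarrow(\mathcal{S}_{rc}^d,d_{AW^r})$ is continuous; since $\Phi\circ\Phi$ is the identity (property (D1)), $\Phi$ is a homeomorphism of $(\mathcal{S}_{rc}^d,d_{AW^r})$ onto itself. It then remains only to identify $\Phi(\mathcal{K}_0^d)$ with $\mathcal{F}^d$: applying $\Phi$ to the decomposition~(\ref{eq:conmut-diagrama}) gives $K^\clubsuit=\Phi(K^\circ)$ for every $K\in\mathcal{K}_0^d$, and since the polar duality is a bijection of $\mathcal{K}_0^d$ and $\clubsuit:\mathcal{K}_0^d\rightarrow\mathcal{F}^d$ is onto (\cite[Theorem 9.4]{MMilmanRotem}), we obtain $\Phi(\mathcal{K}_0^d)=\{\Phi(K^\circ):K\in\mathcal{K}_0^d\}=\{K^\clubsuit:K\in\mathcal{K}_0^d\}=\mathcal{F}^d$. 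Being an involution, $\Phi$ therefore restricts to a homeomorphism $\Phi|_{\mathcal{K}_0^d}:(\mathcal{K}_0^d,d_{AW^r})\rightarrow(\mathcal{F}^d,d_{AW^r})$, with continuous inverse $\Phi|_{\mathcal{F}^d}$.

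For part (2), I would exhibit a sequence along which $\clubsuit$ fails to be continuous. Fix $u\in\mathbb{S}^{d-1}$, and set $R_u:=\{tu:t\geq0\}$ and $K_n:=[0,nu]$, so that $K_n,R_u\in\mathcal{K}_0^d$. The first step is to verify $K_n\rightarrow R_u$ in $d_{AW}$: whenever $\|x\|<j\leq n$, the metric projection of $x$ onto $R_u$, namely $\max\{\langle x,u\rangle,0\}\,u$, has norm at most $\|x\|<n$, hence lies in $K_n\subseteq R_u$, so $d(x,K_n)=d(x,R_u)$; thus the $j$-th term in~(\ref{eq:dAW}) vanishes for $j\leq n$, whence $d_{AW}(K_n,R_u)\leq\frac{1}{n+1}\rightarrow0$. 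The second step is to show that $(K_n^\clubsuit)_n$ does not converge to $R_u^\clubsuit$ in $d_{AW^r}$. Here $\rho_{K_n^\clubsuit}(\theta)=h_{K_n}(\theta)=n\max\{\langle\theta,u\rangle,0\}$, while $\rho_{R_u^\clubsuit}(\theta)=h_{R_u}(\theta)$ equals $\infty$ when $\langle\theta,u\rangle>0$ and $0$ otherwise (that is, $R_u^\clubsuit=H_u^+\cup\{0\}$, as recorded in the text). Choosing, for each $n$, a unit vector $\theta_n$ with $\langle\theta_n,u\rangle=\frac{1}{2n}$ (possible since $d\geq2$), one gets $\min\{\rho_{K_n^\clubsuit}(\theta_n),1\}=\frac12$ but $\min\{\rho_{R_u^\clubsuit}(\theta_n),1\}=1$, so $\delta(K_n^\clubsuit\cap B_2^d,R_u^\clubsuit\cap B_2^d)\geq\frac12$; by Lemma~\ref{lemm:axu-calcular-rdAW}-(3) this gives $d_{AW^r}(K_n^\clubsuit,R_u^\clubsuit)\geq\min\{1,\delta(K_n^\clubsuit\cap B_2^d,R_u^\clubsuit\cap B_2^d)\}\geq\frac12$ for all $n$. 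Hence $\clubsuit(K_n)\not\rightarrow\clubsuit(R_u)$ in $d_{AW^r}$, although $K_n\rightarrow R_u$ in $d_{AW}$, so $\clubsuit$ is discontinuous at $R_u$.

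I do not anticipate a serious obstacle, since both parts essentially assemble results already in hand. The points requiring a little care are, in part~(2), the projection argument establishing $K_n\rightarrow R_u$ in $d_{AW}$ together with the correct application of Lemma~\ref{lemm:axu-calcular-rdAW}-(3) at level $j=1$ to extract a positive lower bound for $d_{AW^r}$; and, conceptually, noticing in part~(1) that one need not analyze the flower map $\clubsuit$ with respect to $d_{AW^r}$ at all, the homeomorphism being delivered directly by $\Phi$ via Theorem~\ref{thm:duality-awr-continua}.
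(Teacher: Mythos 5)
Your proposal is correct. Part (1) is essentially the paper's argument: the homeomorphism is $\Phi$ itself, which by Theorem~\ref{thm:duality-awr-continua} is a continuous involution of $(\mathcal{S}_{rc}^d,d_{AW^r})$, and it interchanges $\mathcal{K}_0^d$ and $\mathcal{F}^d$ by \cite[Theorem 9.4]{MMilmanRotem} together with the decomposition~(\ref{eq:conmut-diagrama}); the paper states this in one line and you merely spell out the identification $\Phi(\mathcal{K}_0^d)=\mathcal{F}^d$.

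For part (2) you take a genuinely different counterexample, and it works. The paper uses $K=[0,1]\times[0,\infty)$ and $K_n=K\cup\{(x,y):x\geq1,\ y\geq n(x-1)\}$ in $\mathbb{R}^2$ (then embeds into $\mathbb{R}^d$), and detects the failure at the single direction $e_1$, where $\rho_{K_n^\clubsuit}(e_1)=\infty$ for all $n$ while $\rho_{K^\clubsuit}(e_1)=1$; this violates even pointwise convergence of radial functions, so by Theorem~\ref{thm:caract-rWijsmanTopology} and Remark~\ref{rem:dAWR-stronger-than-Wr} the flowers fail to converge already in $\tau_{W^r}$, hence in $d_{AW^r}$. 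Your example $K_n=[0,nu]\to R_u$ is simpler (segments converging to a ray, with $K_n^\clubsuit=B_{nu}$ and $R_u^\clubsuit=H_u^+\cup\{0\}$), and your verification of $d_{AW}$-convergence via the metric projection onto the ray, as well as the lower bound $d_{AW^r}(K_n^\clubsuit,R_u^\clubsuit)\geq\frac12$ obtained from Lemma~\ref{lemm:axu-calcular-rdAW}-(3) at $j=1$ with the test directions $\theta_n$, are both correct. One structural difference worth noting: in your example $\rho_{K_n^\clubsuit}$ \emph{does} converge pointwise to $\rho_{R_u^\clubsuit}$, i.e.\ $K_n^\clubsuit\to R_u^\clubsuit$ in $\tau_{W^r}$, so the $\tau_{W^r}$ shortcut used in the paper is unavailable to you and the direct quantitative estimate via Lemma~\ref{lemm:axu-calcular-rdAW} is genuinely needed; conversely, the paper's example yields the formally stronger conclusion that $\clubsuit$ is discontinuous even into $(\mathcal{F}^d,\tau_{W^r})$. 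For the statement as written, either route suffices.
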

\begin{proof}
    (1) It follows directly from Theorem \ref{thm:duality-awr-continua}, and the fact that  
    $\Phi\left(\mathcal{F}^d\right)=\mathcal{K}_0^d$ (see \cite[Theorem 9.4]{MMilmanRotem}).

    (2) Let $K,K_n\subseteq\mathbb{R}^2$, $n\in\mathbb{N}$, be defined as $K=[0,1]\times[0,\infty)$ and
    $$
    K_n=K\cup\left\{(x,y)\in\mathbb{R}^2:x\geq1\text{ and } y\geq n(x-1)\right\}.
    $$
    We shall prove that the sequence $(K_n)_n$ converges in $d_{AW}$ to $K$, but $(K^\clubsuit_n)_n$ does not converges to $K^\clubsuit$ in $d_{AW^r}$. The result for an arbitrary dimension will be obtained by considering the embedding $i:\mathbb{R}^2\rightarrow\mathbb{R}^d$, given by $i(x,y)=(x,y,0,\ldots,0)$,  the convex set $i(K)$, and the sequence $(i(K_n))_n$. 

    To prove the $d_{AW}$-convergence of $(K_n)_n$ to $K$, observe that the sequence of distance functionals $(d(\cdot,K_n))_n$ converges pointwise to $d(\cdot,K)$. Since $\tau_W$ and the metric topology induced by $d_{AW}$ agree on $\mathcal{K}_0^d$ (see e.g. \cite[Theorem 3.1.4]{Beer1993}), then $(K_n)_n$ is $d_{AW}$-convergent to $K$. 
    
    On the other hand, notice that $\rho_{K^\clubsuit}(e_1)=h_K(e_1)=1$. Also, for any $t\geq1$ and $n\geq1$, $(t,nt)\in K_n$. Hence
    $\rho_{K_{n}^\clubsuit}(e_1)\geq t$, which yields to  $\rho_{K_{n}^\clubsuit}(e_1)=\infty$ for all $n$. The latter, in combination with Theorem \ref{thm:caract-rWijsmanTopology}, proves that $(K_{n}^\clubsuit)_n$ does not converge in $\tau_{W^r}$ to $K^\clubsuit$. By Remark \ref{rem:dAWR-stronger-than-Wr}, the sequence $(K_{n}^\clubsuit)_n$ is not $d_{AW^r}$-convergent to $K^\clubsuit$ either. 

    To prove the discontinuity of $\clubsuit:\left(\mathcal{K}_{0}^d,d_{AW}\right)\rightarrow\left(\mathcal{F}^d,d_{AW^r}\right)$ for $d>2$. Notice that $\rho_{(i(K))^\clubsuit}(e_1)=1$ and that $\rho_{(i(K_n))^\clubsuit}(e_1)=\infty$ for all $n$. Moreover, for every integer $j\geq1$, we have that:
    \begin{align*}
    d_H\left(i(K)\cap jB_2^d,i(K_n)\cap jB_2^d\right)&=d_H\left(i\left(K\cap jB_2^2\right),i\left(K_n\cap jB_2^2\right)\right)\\
    &\leq d_H(K\cap jB_2^2,K_n\cap jB_2^2).
    \end{align*}
    Thus,  $d_{AW}\left(i(K),i(K_n)\right)\leq d_{AW}(K,K_n).$
    Therefore $(i(K_n))_n$ converges, in $d_{AW}$, to $i(K)$, while $\left((i(K_n))^\clubsuit\right)_n$ does not converges to $(i(K))^\clubsuit$ in $d_{AW^r}$. This finishes the proof.     
\end{proof}


At this stage, it is natural to consider an alternative metric on the family of flowers, namely the one induced on $\mathcal{F}^d$ by the map $\clubsuit$ and the space $\left(\mathcal{K}_0^d,d_{AW}\right)$. For every pair of flowers $F_1=K^\clubsuit_1$ and $F_2=K^\clubsuit_2$ with $K_1,K_2 \in\mathcal{K}_0^d$, the distance $d_{\clubsuit}(F_1,F_2)$ is defined by:
\begin{equation}\label{eq:flower-distance}
   d_{\clubsuit}(F_1,F_2):=d_{AW}(K_1,K_2). 
\end{equation}

In this case, the flower map $\clubsuit:\left(\mathcal{K}_0^d,d_{AW}\right)\rightarrow\left(\mathcal{F}^d,d_{\clubsuit}\right)$ is clearly an isometric isomorphism. However, as shown in the proof of Proposition \ref{prop:flore-discontinuous-dAWr}, the identity map 
$$
I:\left(\mathcal{F}^d,d_{\clubsuit}\right)\rightarrow\left(\mathcal{F}^d,d_{AW^r}\right)
$$
is discontinuous. This proves that on $\mathcal{F}^d$,
the metric topologies determined by $d_{AW^r}$ and $d_{\clubsuit}$  are different.

In this context, the following natural questions arise:

\textbf{Question 1:} Is it possible to establish an intrinsic formulation of the distance $d_{\clubsuit}$ on the family $\mathcal{F}^d$, analogous to the intrinsic characterizations of the Hausdorff distance and the metric $\delta$ for bounded flowers?

\textbf{Question 2:} If such a formulation exists, one may ask whether the metric topology extends to a larger class of star bodies, and to investigate the topological properties of the duality $\Phi$ within this broader framework.

Recall that the Hilbert cube $Q$ is the topological product $\Pi_{i=1}^\infty[-1,1]$. In \cite[Theorem 3.4]{NataliaLuisa2}, tools from the theory of Hilbert-cube manifolds were used to show that the space $(\mathcal{K}_0^d,d_{AW})$ is homeomorphic with $Q$. Furthermore, combining \cite[Corollary 4]{Slomka2011} and \cite[Theorem 2]{NataliaLuisa2}, it follows that any  duality $g$ on $\mathcal{K}_0^d$ is automatically continuous with respect to $d_{AW^r}$, and
is topologically conjugate\footnote{Let $X$ and $Y$ be topological spaces. Then, two continuous functions $f,h:X\rightarrow Y$ are called \textit{topologically conjugate} if there is a homeomorphism $\Psi:X\rightarrow Y$ such that $g\circ\Psi=\Psi\circ f$ for all $x\in X$.} to the polar duality if and only if it has a unique fixed point.

In the setting of star bodies, the map $\Phi:\mathcal{S}_{rc}^d\rightarrow\mathcal{S}_{rc}^d$, as well as its restriction $\Phi_{|\mathcal{S}_1^d}$, are dualities with the Euclidean ball $B_2^d$ as their unique fixed point. Moreover, Theorem \ref{thm:duality-awr-continua} and Corollary \ref{cor:duality-Phi-cont-S1}, show that both are continuous with respect to $d_{AW^r}$. These facts naturally lead to the following question:

\textbf{Question 3:} What are the homeomorphism types (with respect to $d_{AW^r}$) of  $\mathcal{S}_{rc}^d$ and $\mathcal{S}_1^d$. Is there a topological characterization of the dualities $\Phi$ or $\Phi_{|\mathcal{S}_1^d}$, analogous to that of the polar duality on $(\mathcal{K}_0^d,d_{AW})$?

\bibliographystyle{acm}

\end{document}